\let\cite=\citet
\begin{document}

\newcommand\footnotemarkfromtitle[1]{%
\renewcommand{\thefootnote}{\fnsymbol{footnote}}%
\footnotemark[#1]%
\renewcommand{\thefootnote}{\arabic{footnote}}}

\newcommand{\TheTitle}{Hyperbolic relaxation technique for solving the dispersive Serre--Green--Naghdi
equations with topography}
\newcommand{\TheAuthors}{J.-L. Guermond, C. Kees, B. Popov, E. Tovar}

\headers{Hyperbolic relaxation technique}{\TheAuthors}

\title{{\TheTitle}\thanks{Draft version, \today \funding{This material
      is based upon work supported in part by the National Science
      Foundation grants DMS-1619892 and DMS-1620058, by the Air Force
      Office of Scientific Research, USAF, under grant/contract number
      FA9550-18-1-0397, and by the Army Research Office under
      grant/contract number W911NF-19-1-0431. Permission was granted
      by the Chief of Engineers to publish this information.}}}

\author{Jean-Luc Guermond\footnotemark[3]
\and Chris Kees\footnotemark[4]
\and Bojan Popov\footnotemark[3]
\and Eric Tovar\footnotemark[3]\footnotemark[2]
}

\maketitle

\renewcommand{\thefootnote}{\fnsymbol{footnote}}

\footnotetext[2]{U.S. Army Engineer Research and Development Center, Coastal and Hydraulics Laboratory (ERDC-CHL), Vicksburg, MS 39180, USA.}

\footnotetext[3]{Department of Mathematics, Texas A\&M University 3368
  TAMU, College Station, TX 77843, USA.}

\footnotetext[4]{Department of Civil \& Environmental Engineering, Louisiana State University
  3255 Patrick F. Taylor, Baton Rouge, LA 70803, USA.}

\renewcommand{\thefootnote}{\arabic{footnote}}

\begin{abstract}
  \bal{The objective of this paper is to propose a hyperbolic
    relaxation technique for the dispersive Serre--Green--Naghdi
    equations (also known as the fully non-linear Boussinesq
    equations) with full topography effects introduced
    in~\cite{green_naghdi_1976}
    and~\cite{seabra-santos_renouard_temperville_1987}.  This is done
    by revisiting a similar relaxation technique introduced in
    \cite{Gu_Po_To_Ke_FAKE_SGN_2019} with partial topography effects.}
  We also derive a family of analytical solutions for the
  one-dimensional dispersive Serre--Green--Naghdi equations that are
  used to \bal{verify the correctness} the proposed relaxed model. The
  method is then numerically illustrated \bal{and validated} by
  comparison with experimental results.
\end{abstract}

\begin{keywords}
  Shallow water, dispersive Serre equations, Serre--Green--Nagdhi, hyperbolic relaxation,
  well-balanced approximation, invariant domain, second-order accuracy, finite element method, positivity-preserving
\end{keywords}

\begin{AMS}
65M60, 65M12, 35L50, 35L65, 76M10
\end{AMS}

\section{Introduction} \label{Sec:introduction} The Saint\,-Venant
shallow water equations model the free surface of a body of water
evolving under the action of gravity under the assumptions that the
deformation of the free surface is small compared to the water
elevation and the bottom topography $z$ varies slowly. Letting
$\mu=\frac{\waterh_0^2}{L^2}$ be the shallowness parameter, where
$\waterh_0$ is the typical water height and $L$ is the typical
horizontal lengthscale, Saint-Venant's shallow water model is the
$\calO(\mu)$ approximation of the free surface Euler equations. This
is a hyperbolic system that has many practical applications, but one
of its main deficiencies is that it does not have dispersive
properties. In particular, it does not support \bal{smooth} solitary waves.  \bal{A
model that includes all the first-order correction in terms of the
shallowness parameter (\ie it is a $\calO(\mu^2)$ approximation of the
free surface incompressible Euler equations) has been introduced by
\cite[Eq.~(22), p.~860]{Serre_1953} with a flat topography. This model has been rediscovered verbatim in
\cite{Su_Gardner_1969}, and rediscovered again in
\cite{Green_Naghdi_1974} also with flat topography.
The key property of this model
(as unequivocally recognized and illustrated in
\citep[\S2]{Serre_1953}) is that it supports smooth solitary waves. The model
has been further improved in~\cite[Eq.~(13)]{seabra-santos_renouard_temperville_1987}
and~\cite[Eq.~(4.27)--(4.31)]{green_naghdi_1976}
to include the effects of topography up to the order $\calO(\mu^2)$.}  We refer the
reader to \cite{Barthelemy_2004} and \cite{Lannes_2020} for
comprehensive reviews of the properties of the model.  All these works
based on informal asymptotic expansions have been rigorously
formalised in \cite[Thm~6.2]{Alvarez-Samaniego_Lannes_2008B}.
\bal{For brevity, we refer to the Serre--Green--Nagdhi equations as just the Serre
model.}

One major drawback of the dispersive Serre model from a numerical
perspective is that it involves third-order derivatives in space. The
presence of the third-order derivatives rules out any approximation
technique that is explicit in time, since this would require the time
step $\dt$ to behave like $\calO(h^3)V^{-1}L^{-2}$, where $h$ is the
mesh-size, $V$ is a characteristic wave speed scale, and $L$ is a
characteristic length scale. There are currently two popular classes
of techniques for addressing this difficulty. The first one is based
on Stang's operator splitting and combines explicit and implicit time
stepping, see for instance
\cite{Bonneton_etal_2011,samii_dawson_2020,Duran_Marche_2017}. Another
approach consists of reinterpreting the dispersive system as a
constrained first-order system and then relaxing the constraint,
\bal{see for instance
  \cite{Favrie_Gavrilyuk_2017},~\cite{TKACHENKO_thesis},~\cite{Gu_Po_To_Ke_FAKE_SGN_2019},
  and~\cite{Escalante_Dumbser_Castro_2019}.} Note that the technique
in~\cite{Escalante_Dumbser_Castro_2019} is based on a dispersive model
introduced in \cite{Bristeau_Mangenay_SaniteMarie_Seguin_2015} which
differs from the Serre model up to a multiplicative constant when the
topography is flat (their pressure constant is $\tfrac14$ as opposed
to the Serre constant $\tfrac13$). \bal{All the above relaxation
  techniques are, however, incomplete since they ignore the topography
  corrections introduced
  in~\citep{green_naghdi_1976,seabra-santos_renouard_temperville_1987}.
  These terms play an important role in modeling the effects of
  vertical acceleration due to the topography and are necessary for
  correctly reproducing laboratory experiments. \bal{A recent
    hyperbolic reformulation with topography effects was introduced
    in~\cite{dumbser_2020} which was based on the work of
    ~\cite{nieto_parisot_2018} (where the reformulation uses a
    constraint on the divergence of the velocity).  }
}

In the present paper we exclusively focus on the topography
  corrections. By revisiting the experiments reported in
  \cite{seabra-santos_renouard_temperville_1987}, we unambiguously
  demonstrate that the topography corrections are indeed important to
  reproduce experiment involving reflected waves (see
  Table~\ref{table:shelf_trans}). We also show that the Serre model
  with topography effects can be reformulated as a constrained
  first-order system. We propose a relaxation technique that makes the
  system hyperbolic and, thereby, allows for explicit time stepping
  under reasonable time step restrictions. The key difference with our
  previous work in \citep{Gu_Po_To_Ke_FAKE_SGN_2019} is now the
  presence of the topography effects in the pressure and extra
  topography source terms which make the analysis more involved: there is one
  more conservation equation and one more relaxation term has to be
  added; the source terms in the additional conservation equations
  have to be modified appropriately to be compatible with the global
  energy conservation equation.

The paper is organized as follows. In Section
  \ref{sec:DSW_model}, we recall the dispersive Serre model with
  topography and state the corresponding energy conservation equation.
  We also derive a family of analytical solutions to the
  one-dimensional system.  To the best of our knowledge, it is the
  first time that an exact solution to this nonlinear system of
  equations is proposed when the topography is nontrivial. This
  solution is not a manufactured solution since it does not involve
  any ad hoc source terms. The main interest of this exact solution is
  to help verify numerical codes for the approximation of the
  dispersive Serre model with topography (which, to the best of our
  knowledge, was done in the literature only with manufactured
  solutions involving ad hoc source terms).  In Section
  \ref{sec:reformulation}, we reformulate the dispersive Serre model
  as a first-order system of nonlinear conservation equations with two
  algebraic constraints. We then relax these constraints and propose a
  hyperbolic system of equations that allows for explicit time
  stepping and is compatible with dry states. This system is shown to
  admit an energy equation.  Finally, in Section
  \ref{Sec:numerical_illustrations}, we illustrate the proposed model.
  Using a continuous finite element technique, we compare the new
  method with that described in \citep{Gu_Po_To_Ke_FAKE_SGN_2019}
  \bal{(where the model is incomplete because the topography correction
  terms from \citep{green_naghdi_1976,seabra-santos_renouard_temperville_1987} are not
  accounted for)}. We also compare the new method with several
  laboratory experiments in one and two spatial dimensions and
  unequivocally conclude that the topography effects are
  important to reproduce the experiments.

\section{Dispersive Serre model with topography}
\label{sec:DSW_model}
In this section we recall the dispersive Serre model with topography
and \bal{derive} an exact solution to the one-dimensional steady state problem.

\subsection{The dispersive model}
Let $\bu=(\waterh,\bq)\tr$ be the dependent variable, where
$\waterh$ is the water height and $\bq$ is the flow rate, or
discharge. \bal{The dispersive Serre model with topography effects
is written as follows:}
%
\begin{equation}
\partial_t \bu +\DIV\polf(\bu) + \bb(\bu,\GRAD z) = \bzero ,\quad
\text{\ae\ }\bx\in D,\ t\in \polR_+.
\label{shallow_equations}
\end{equation}
The flux $\polf(\bu)$ and the bathymetry source
$\bb(\bu,\GRAD z)$ are given by
\begin{equation}
\quad
\polf(\bu):=\left(\begin{matrix}
\bq\tr \\
\frac{1}{\waterh}  \bq {\otimes}\bq + p(\bu) \polI_d
\end{matrix}\right)\in \polR^{(1+d)\times d},\quad
\bb(\bu,\GRAD z) := \left(\begin{matrix} 0 \\ r(\bu)\GRAD z\end{matrix}\right),
\label{shallow_equations_def_fluxes}
\end{equation}
with the pressure $p(\bu)$ and the source $r(\bu)$ defined by
\begin{subequations} \label{pressure_topography_SGN}
\begin{align}
&p(\bu)  \eqq \tfrac12 g \waterh^2 +\waterh ^2\Big(  \tfrac13 \ddot{\waterh} + \tfrac12\dot\sfk\Big), &&
\dot{\waterh}\eqq \partial_t \waterh + \bv\ADV  \waterh,\qquad
\ddot{\waterh}\eqq \partial_t \dot{\waterh} + \bv\ADV \dot{\waterh}, \label{SGN_pressure}\\
&r(\bu) = g\waterh + \waterh\Big(\tfrac12 \ddot{\waterh} + \dot\sfk\Big), \label{SGN_topography}
&& \dot\sfk \eqq \partial_t(\bv\ADV z) + \bv\ADV(\bv\ADV z).
\end{align}
\end{subequations}
Here the vector field $\bv$ is the velocity and is defined by
$\bv:=\frac{1}{\waterh} \bq$.  Due to the presence of the term
$\tfrac12 \ddot{\waterh} + \dot\sfk$, the pressure mapping
$\bu\mapsto p(\bu)$ is not a function but a second-order differential
operator in space and time.  This implies that
\eqref{shallow_equations}--\eqref{pressure_topography_SGN} is not a
quasilinear first-order system. \bal{In this paper we exclusively
focus our attention on the terms $\tfrac12\waterh ^2\dot\sfk$ in
$p(\bu)$ and $\dot\sfk$ in $\br(\bu)$, which are the $\calO(\mu^2)$
contributions induced by the topography identified in \cite{green_naghdi_1976} and
\cite{seabra-santos_renouard_temperville_1987}.}

Notice that the mass conservation equation implies
$\dot\waterh = -\waterh \DIV \bv$, which in turn implies that
$\ddot{\waterh} = -\partial_t (\waterh \DIV \bv) - \bv\ADV(\waterh
\DIV \bv)$.
\bal{This means that the term $\waterh ^2 \ddot{\waterh}$ in the pressure
equation~\eqref{SGN_pressure} can also be rewritten as
$-\waterh^3(\partial_{t}\DIV\bv + \bv\ADV(\DIV\bv) -( \DIV\bv)^2)$,
which is exactly the expression given in \cite[Eq.~(22), p.~860]{Serre_1953}
and \cite[A14]{Su_Gardner_1969} twenty-one and five years earlier,
respectively, than \cite[Eq.~(4.16)--(4.19)]{Green_Naghdi_1974}.}

One important result we are going to use is that the system
\eqref{shallow_equations}--\eqref{pressure_topography_SGN}
admits an energy equation if the solution is smooth.
\begin{lemma}\label{lem:energy}
  Let $\bu$ be a smooth solution to
  \eqref{shallow_equations}--\eqref{pressure_topography_SGN},
  then the following holds true:
$\partial_t \calE(\bu) + \DIV(\bcalF(\bu)) =0$,
with
\begin{subequations}
\begin{align}
\calE(\bu) &\eqq \tfrac12 g \bal{(\waterh+ z)^2}  + \tfrac12 \waterh \bv^2
+ \tfrac16 \waterh  \Big(\big(\dot{\waterh} + \tfrac32 (\bv\ADV z)\big)^2 +\tfrac34 (\bv\ADV z)^2 \Big), \label{DSV_energy}\\
\bcalF(\bu) &\eqq \bv (\calE(\bu) \bal{-\tfrac12 g z^2} + p(\bu)).
\end{align}
\end{subequations}
\end{lemma}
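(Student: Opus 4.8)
The plan is to run the classical shallow-water energy argument and then show that the $\calO(\mu^2)$ topography corrections in $p(\bu)$ and $r(\bu)$ collapse into an exact divergence. First I would rewrite the momentum equation in non-conservative form: using mass conservation $\partial_t\waterh + \DIV(\waterh\bv)=0$ one has $\partial_t\bq + \DIV(\tfrac1\waterh\bq\otimes\bq) = \waterh\dot\bv$ with $\dot\bv\eqq\partial_t\bv+\bv\ADV\bv$, so that for smooth solutions \eqref{shallow_equations}--\eqref{shallow_equations_def_fluxes} is equivalent to $\waterh\dot\bv + \GRAD p(\bu) + r(\bu)\GRAD z = \bzero$. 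Taking the inner product with $\bv$, and using mass conservation once more to pull $\waterh$ through the time and space derivatives, yields the kinetic-energy balance
\[
\partial_t\big(\tfrac12\waterh\bv^2\big) + \DIV\big(\tfrac12\waterh\bv^2\bv + p(\bu)\bv\big) = p(\bu)\DIV\bv - r(\bu)\,\bv\ADV z .
\]

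Next I would produce a potential-energy balance straight from mass conservation: multiplying it by $g(\waterh+z)$ and using $\partial_t z=0$ gives $\partial_t\big(\tfrac12 g(\waterh+z)^2\big) + \DIV\big(g(\waterh+z)\waterh\bv\big) = g\waterh\,\bv\ADV(\waterh+z)$. I then add the two balances and split $p(\bu)=\tfrac12 g\waterh^2 + p_d$, $r(\bu)=g\waterh + r_d$, with $p_d\eqq \tfrac13\waterh^2\ddot\waterh + \tfrac12\waterh^2\dot\sfk$ and $r_d\eqq\tfrac12\waterh\ddot\waterh + \waterh\dot\sfk$. The hydrostatic contributions reassemble into divergences and cancel exactly as in the standard well-balanced Saint-Venant analysis (for instance $\tfrac12 g\waterh^2\DIV\bv + g\waterh\,\bv\ADV\waterh = \DIV(\tfrac12 g\waterh^2\bv)$), which is precisely the mechanism that places $\tfrac12 g(\waterh+z)^2$ in $\calE(\bu)$ and the correction $-\tfrac12 gz^2$ in $\bcalF(\bu)$. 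After these cancellations the whole lemma reduces to the single scalar identity
\[
\partial_t e_d + \DIV(e_d\bv) + p_d\DIV\bv - r_d\,\bv\ADV z = 0,\qquad e_d \eqq \tfrac16\waterh\Big(\big(\dot\waterh + \tfrac32\,\bv\ADV z\big)^2 + \tfrac34(\bv\ADV z)^2\Big),
\]
where $e_d$ is the dispersive part of $\calE(\bu)$ in \eqref{DSV_energy}.

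To establish this identity I would use that $\DIV\bv = -\dot\waterh/\waterh$ by mass conservation, that $\ddot\waterh$ is the material derivative of $\dot\waterh$ and $\dot\sfk$ that of $\bv\ADV z$, and rewrite the left-hand side as $\dot e_d - (\dot\waterh/\waterh)(e_d+p_d) - r_d\,\bv\ADV z$. Setting $a\eqq\bv\ADV z$, $b\eqq\dot\waterh$ and $E\eqq(b+\tfrac32 a)^2+\tfrac34 a^2$ (so $e_d=\tfrac16\waterh E$), every term is a polynomial in $a,b,\dot a,\dot b$ carrying a single power of $\waterh$, and expanding $\dot e_d=\tfrac16(bE + \waterh\dot E)$ shows that the $\tfrac16 bE$ term is killed by $-(\dot\waterh/\waterh)e_d$ while the monomials $\waterh b\dot b$, $\waterh b\dot a$, $\waterh a\dot b$, $\waterh a\dot a$ cancel one by one against $-(\dot\waterh/\waterh)p_d - r_d\,a$. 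It is worth noting, though not needed, that $e_d=\tfrac12\waterh\int_0^1(a+sb)^2\,\mathrm{d}s$ is the depth-integrated kinetic energy of the depth-linear vertical velocity, which runs from $\bv\ADV z$ at the bottom to $\dot\waterh+\bv\ADV z$ at the free surface; this is the structural reason the bookkeeping closes. Indeed the only real difficulty is that bookkeeping: the hydrostatic/dispersive split of $p(\bu)$ and $r(\bu)$ must be made in exactly the above way for the two remainders to separate cleanly, and the coefficients $\tfrac13,\tfrac12,\tfrac32,\tfrac34$ must be carried faithfully through the expansion of $\dot e_d$; there is no analytic obstacle beyond this.
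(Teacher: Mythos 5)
Your proof is correct, and the final cancellation does close: writing $a\eqq\bv\ADV z$, $b\eqq\dot\waterh$, the material derivative of $e_d=\tfrac16\waterh(b^2+3ab+3a^2)$ produces $\tfrac13\waterh b\dot b+\tfrac12\waterh b\dot a+\tfrac12\waterh a\dot b+\waterh a\dot a$, which is exactly annihilated by $-(b/\waterh)p_d-r_d\,a$ with $p_d=\waterh^2(\tfrac13\ddot\waterh+\tfrac12\dot\sfk)$ and $r_d=\waterh(\tfrac12\ddot\waterh+\dot\sfk)$; the signs in your residual identity are the right ones, and the hydrostatic reassembly is indeed what places $-\tfrac12 gz^2$ in $\bcalF(\bu)$. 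The route is, however, not the one the paper takes: the paper gives no direct proof of this lemma and instead points to the proof of Lemma~\ref{Lem:relax_energy}, which establishes the analogous balance for the relaxed first-order system \eqref{full_relaxed} by multiplying the auxiliary equations for $q_1$, $q_2$, $q_3$ by $\tfrac{1}{3\waterh}s_\epsilon(\bu)$, $\tfrac13\omega$ and $\tfrac14\beta$, respectively. Your step combining mass times $g(\waterh+z)$ with momentum times $\bv$ coincides with step (i) there, and your quadratic form $e_d$ is precisely $\tfrac16\waterh\omega^2+\tfrac18\waterh\beta^2$ evaluated on the constraint manifold $\omega=\dot\waterh+\tfrac32\bv\ADV z$, $\beta=\bv\ADV z$, so the two computations are term-by-term parallel. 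What your version buys is a self-contained argument on the original system \eqref{shallow_equations}--\eqref{pressure_topography_SGN} that does not pass through the reformulation of Lemma~\ref{lem:equivalence} or an implicit limit in the relaxation parameter; what the paper's version buys is that the same multiplier structure simultaneously yields the dissipation inequality for the relaxed system, which is what is needed downstream. Your remark that $e_d=\tfrac12\waterh\int_0^1(a+sb)^2\,\mathrm{d}s$ is the depth-integrated kinetic energy of the depth-linear vertical velocity is a structural explanation the paper does not offer and is worth keeping.
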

\begin{proof} We direct the reader to the proof of Lemma~\ref{Lem:relax_energy} for a
  detailed process on the derivation of the energy equation. Note that the energy
  \eqref{DSV_energy} is the same as the one reported in \cite[Eq.~(1.19)--(1.22)]{castro_lannes_2014}.
\end{proof}
\subsection{One-dimensional steady-state solution}  \label{Sec:steady_state_solution}
We now
propose a steady-state solution to
\eqref{shallow_equations}--\eqref{pressure_topography_SGN}
with non-trivial topography.  We restrict ourselves to one space dimension
and assume that the solution to
\eqref{shallow_equations}--\eqref{pressure_topography_SGN}
is time-independent and smooth. The following assertion is the main
result from this section.
\begin{lemma}
\label{Lem:steady_state_solution}
Let $q\in\Real$, $a,r\in \Real_+$ and let the bathymetry profile be defined by
$z(x)\eqq -\tfrac12\frac{a}{(\cosh(rx))^{2}}$. Then
$\waterh(x) = \waterh_0(1+\frac{a}{(\cosh(rx))^{2}})$ with the
constant discharge $q$ is a steady state solution to
\eqref{shallow_equations}--\eqref{pressure_topography_SGN} if
\begin{equation}
 q := \pm\sqrt\frac{(1 + a)g \waterh_0^3}{2}, \qquad
r := \frac{1}{\waterh_0}\sqrt{\frac{3a}{(1 + a)}}. \label{eq:Lem:steady_state_solution}
\end{equation}
\end{lemma}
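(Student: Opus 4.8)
\emph{Proof idea.} The plan is to reduce the steady problem to a single first integral and then check that integral by direct substitution of the announced profiles. Since the solution is time-independent, $\partial_t\waterh\equiv0$, so the mass equation in~\eqref{shallow_equations} reduces to $\partial_x q=0$; taking $q$ constant is therefore consistent and the mass equation holds automatically, leaving only the steady momentum equation, which in one space dimension reads $\partial_x\big(\tfrac{q^2}{\waterh}+p(\bu)\big)+r(\bu)\,\partial_x z=0$. Because $\partial_t\equiv0$ and the scalar velocity is $\bv=q/\waterh$ with $q$ constant, the material derivatives in $p(\bu)$ and $r(\bu)$ reduce to ordinary $x$-derivatives: a short computation gives $\dot\waterh=\tfrac{q}{\waterh}\partial_x\waterh$ and $\waterh^2\ddot\waterh=q^2\big(\partial_x^2\waterh-\tfrac{(\partial_x\waterh)^2}{\waterh}\big)$, with analogous expressions for $\bv\ADV z$ and $\dot\sfk$.

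The key observation is that the two prescribed profiles are affinely related, $\waterh=\waterh_0-2z$, so that $\partial_x z=-\tfrac12\partial_x\waterh$. This yields $\bv\ADV z=-\tfrac12\dot\waterh$ and $\dot\sfk=-\tfrac12\ddot\waterh$, whence the dispersive part of the topography source vanishes identically, $\waterh\big(\tfrac12\ddot\waterh+\dot\sfk\big)=0$; thus $r(\bu)=g\waterh$ and the whole bathymetry source is the total derivative $r(\bu)\,\partial_x z=g\waterh\,\partial_x z=-\tfrac14 g\,\partial_x(\waterh^2)$. Likewise the dispersive pressure collapses to $\tfrac{1}{12}\waterh^2\ddot\waterh$. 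The steady momentum equation therefore integrates once, and evaluating the integration constant at the far field $x\to\pm\infty$ (where $z,\partial_x z\to0$ and $\waterh\to\waterh_0$) gives
\[
\frac{q^2}{\waterh}+\frac14 g\waterh^2+\frac{q^2}{12}\Big(\partial_x^2\waterh-\frac{(\partial_x\waterh)^2}{\waterh}\Big)=\frac{q^2}{\waterh_0}+\frac14 g\waterh_0^2 .
\]

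It remains to substitute $\waterh(x)=\waterh_0\big(1+a/\cosh^2(rx)\big)$ into this identity. Setting $c=\cosh(rx)$ (so $c'=r\sinh(rx)$ and $\sinh^2(rx)=c^2-1$), computing $\partial_x\waterh$ and $\partial_x^2\waterh$, clearing the strictly positive denominators, and reducing every occurrence of $\sinh^2(rx)$ to $c^2-1$, one is left with a polynomial in $c^2$ that must vanish for all $x$, hence identically. Its three coefficients (of $c^4$, $c^2$, and $c^0$) give two independent conditions on the parameters, whose solution is exactly $q^2=\tfrac12(1+a)g\waterh_0^3$ and $r^2=\tfrac{3a}{(1+a)\waterh_0^2}$, i.e.\ \eqref{eq:Lem:steady_state_solution}; both signs of $q$ are admissible since $q$ enters only through $q^2$. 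The only genuine work is this final substitution-and-collection step — the earlier reductions are routine — and it is usefully cross-checked by Lemma~\ref{lem:energy}, which forces the energy flux $\bcalF(\bu)=\bv\big(\calE(\bu)-\tfrac12 gz^2+p(\bu)\big)$ to be constant along the solution as well.
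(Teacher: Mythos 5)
Your proof is correct, and it reaches \eqref{eq:Lem:steady_state_solution} by a genuinely different route than the paper. The paper obtains its first integral from the energy equation of Lemma~\ref{lem:energy}: steady flow forces $\partial_x\calF(\bu)=0$, giving a Bernoulli-type relation in which the proportionality constant $\lambda$ of the ansatz $z=\lambda(\waterh-\waterh_0)$ is kept unknown and is determined ($\lambda=-\tfrac12$) together with $q$ and $r$ from a system of three polynomial equations in $\cosh^2(rx)$. You instead integrate the steady momentum equation directly, which only works because you exploit from the outset the structural consequence of $\partial_x z=-\tfrac12\partial_x\waterh$: in steady flow $\bv\ADV z=-\tfrac12\dot\waterh$ and $\dot\sfk=-\tfrac12\ddot\waterh$, so the dispersive part of the topography source cancels identically, $r(\bu)=g\waterh$, the source $r(\bu)\partial_xz=-\tfrac14 g\,\partial_x(\waterh^2)$ becomes exact, and the dispersive pressure collapses to $\tfrac1{12}\waterh^2\ddot\waterh$. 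I have verified the step you leave implicit: substituting the profile into your first integral and dividing out $a/\cosh^2(rx)$ leaves a quadratic in $1/\cosh^2(rx)$ whose three coefficients give exactly two independent conditions (two of them both reduce to $q^2r^2=\tfrac32 ag\waterh_0$, and the remaining one then yields $q^2=\tfrac12(1+a)g\waterh_0^3$), which is precisely \eqref{eq:Lem:steady_state_solution}. Your argument is more elementary --- it does not invoke Lemma~\ref{lem:energy}, whose proof the paper in any case defers --- and the identity $\tfrac12\ddot\waterh+\dot\sfk\equiv 0$ along this family is a nice observation not made in the paper; what the paper's route buys is the \emph{discovery} of $\lambda=-\tfrac12$ rather than its assumption. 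One caveat, inherited from the statement rather than introduced by you: the affine relation you use, $\waterh=\waterh_0-2z$, matches the profiles as literally written only when $\waterh_0=1$ (the stated $z$ lacks a factor $\waterh_0$ relative to the paper's own proof, which takes $z=-\tfrac12(\waterh-\waterh_0)$); since all your argument genuinely needs is $\partial_xz=-\tfrac12\partial_x\waterh$, this is a defect of the lemma's normalization, not a gap in your proof.
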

\begin{proof} Notice that the discharge $q$ is constant since the
  solution does not depend on time.  A steady-state solution can be
  found by solving the steady-state problem of the energy equation in
  Lemma~\ref{lem:energy}; this yields a Bernoulli-like relation
    for the dispersive Serre model
    \eqref{shallow_equations}--\eqref{pressure_topography_SGN}. More
    precisely, from Lemma~\ref{lem:energy}, we infer that
  $\partial_x (\calF(\bu))=0$, which implies
  $\calF(\bu(x))= C_{\text{Ber}}g q$ where $C_{\text{Ber}}$ is the
  Bernoulli constant. We look for a stationary wave with the following
  structure $\waterh(x) = \waterh_0(1+\frac{a}{(\cosh(rx))^{2}})$ and
  posit that the topography is of the form
  $z(x) = \lambda (\waterh(x)-\waterh_0)$. The problem now consists of
  finding relations between the parameters $a$, $r$, $\waterh_0$, $g$,
  and $\lambda$ so that the condition
  $g^{-1}q^{-1}\calF(\bu(x))= C_{\text{Ber}}$ is satisfied, \ie
\[
\waterh(1+\lambda) + \frac{q^2}{2g\waterh^2}
- \frac{q^2}{6g\waterh^2}(1-3\lambda^2)(\partial_x\waterh)^2
+ \frac{q^2}{3g\waterh}(1+\tfrac32\lambda) \partial_{xx}\waterh = C_{\text{Ber}} + \lambda \waterh_0,
\]
where we used $v = \frac{q}{\waterh}$.
By taking the limit of this identity for $|x|\to \infty$, we find that
$C_{\text{Ber}} = \waterh_0 + \frac{q^2}{2g\waterh_0^2}.$ After
inserting the ansatz $\waterh(x)=\waterh_0(1+\frac{a}{(\cosh(rx))^{2}})$
into the above identity, we find that the following must hold true for
all $x\in \Real$:
\begin{multline*}
\big((1 + \lambda)g\waterh_0^3 + (\lambda  + \tfrac23)2r^2q^2\waterh_0^2 - q^2\big)\cosh(rx)^4 \\
+ \big((1 + \lambda)2ag\waterh_0^3 + ((\lambda^2 + \lambda + \tfrac13)a - \tfrac32 \lambda - 1)2r^2q^2\waterh_0^2
- \tfrac12 aq^2\big)\cosh(rx)^2 \\
+ \big((1 + \lambda)ag\waterh_0 - 2(\lambda^2 + \tfrac32 \lambda + \tfrac23)r^2q^2\big)a\waterh_0^2 =0.
\end{multline*}
This is equivalent to asserting that following nonlinear system of equations has a solution:
\begin{align*}
 (1 + \lambda)ag\waterh_0 - 2(\lambda^2 + \tfrac32 \lambda + \tfrac23)r^2q^2 &= 0,\\
(1 + \lambda)2ag\waterh_0^3 + ((\lambda^2 + \lambda + \tfrac13)a - \tfrac32 \lambda - 1)2r^2q^2\waterh_0^2
- \tfrac12 aq^2 &= 0,\\
(1 + \lambda)g\waterh_0^3 + (\lambda  + \tfrac23)2r^2q^2\waterh_0^2 - q^2 &=0.
\end{align*}
The only nontrivial solution we have found
is $\lambda=-\tfrac12$ with $q$ and $r$ satisfying
\eqref{eq:Lem:steady_state_solution}.
\end{proof}
It seems that the solution formulated in
Lemma~\ref{Lem:steady_state_solution} is the first exact solution to
the nonlinear system
\eqref{shallow_equations}--\eqref{pressure_topography_SGN}
proposed in the literature when the topography is nontrivial. Our
motivation to construct this solution was mainly to verify convergence of numerical codes
solving
\eqref{shallow_equations}--\eqref{pressure_topography_SGN}.

\section{Reformulation of the dispersive Serre model}
\label{sec:reformulation}
In this section, we reformulate the dispersive Serre model
as a first-order system under two algebraic constraints. Our goal is to
introduce a relaxation technique in the spirit of
\cite{Gu_Po_To_Ke_FAKE_SGN_2019}.
\subsection{Reformulation} The analysis done in
\citep{Gu_Po_To_Ke_FAKE_SGN_2019} is incomplete
since it does not account for the dispersion terms induced by
the topography. We now revisit
\citep{Gu_Po_To_Ke_FAKE_SGN_2019} to fill in the gap and account for the missing terms. The main result of
this section is the following.

\begin{lemma} \label{lem:equivalence} Let
  $\bu:\Dom\CROSS (0,T)\to \Real_+\CROSS \Real^d$ be a smooth
  function.  Then $\bu$ solves the dispersive Serre
  model~\eqref{shallow_equations}--\eqref{pressure_topography_SGN}
  iff $(\bu,q_1,q_2,q_3)$ solves
\begin{subequations}\label{eq:lem:equivalence}
\begin{align}
&\partial_t \waterh + \DIV\bq = 0,\label{mass:lem:equivalence}\\\
&\partial_t \bq + \DIV(\bv\otimes \bq) + \GRAD (\tfrac12g\waterh^2 - \tfrac13 \waterh s)
=-(g \waterh -\tfrac12 s + \tfrac14 \ts )\GRAD z,\label{q:lem:equivalence}\\
&\partial_t q_1 + \DIV (\bv q_1) = q_2 -\tfrac32 q_3,\label{q1:lem:equivalence}\\
&\partial_t q_2 + \DIV (\bv q_2) = -s, \label{q2:lem:equivalence}\\
&\partial_t q_3 + \DIV (\bv q_3) = \ts, \label{q3:lem:equivalence}\\
&q_1 = \waterh^2,\quad q_3 = \bq\ADV z.  \label{q1q3:lem:equivalence}
\end{align}
\end{subequations}
\end{lemma}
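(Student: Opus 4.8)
The plan is to produce, for smooth solutions, an explicit dictionary between the unknowns of the two systems and then to check that, under this dictionary, each equation of \eqref{eq:lem:equivalence} is equivalent to a consequence of \eqref{shallow_equations}--\eqref{pressure_topography_SGN}. The dictionary is
\[
q_1\eqq\waterh^2,\qquad q_3\eqq\bq\ADV z,\qquad q_2\eqq\waterh\big(\dot\waterh+\tfrac32\,\bv\ADV z\big),\qquad
s\eqq-\waterh\ddot\waterh-\tfrac32\waterh\dot\sfk,\qquad \ts\eqq\waterh\dot\sfk .
\]
The only computational device needed is the elementary consequence of mass conservation: if $\partial_t\waterh+\DIV(\waterh\bv)=0$, then for every smooth scalar field $\phi$ one has $\partial_t(\waterh\phi)+\DIV(\waterh\bv\,\phi)=\waterh(\partial_t\phi+\bv\ADV\phi)$, and in particular $\dot\waterh=-\waterh\DIV\bv$.

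For the direct implication, assume $\bu$ solves \eqref{shallow_equations}--\eqref{pressure_topography_SGN} and define $q_1,q_2,q_3,s,\ts$ by the dictionary. Then \eqref{mass:lem:equivalence} is the mass equation and \eqref{q1q3:lem:equivalence} holds by construction. Applying the mass identity with $\phi=\waterh$, with $\phi=\bv\ADV z$, and with $\phi=\dot\waterh+\tfrac32\bv\ADV z$ (whose material derivative is $\ddot\waterh+\tfrac32\dot\sfk$) yields, respectively, $\partial_t q_1+\DIV(\bv q_1)=\waterh\dot\waterh=q_2-\tfrac32 q_3$, $\partial_t q_3+\DIV(\bv q_3)=\waterh\dot\sfk=\ts$, and $\partial_t q_2+\DIV(\bv q_2)=\waterh(\ddot\waterh+\tfrac32\dot\sfk)=-s$; these are exactly \eqref{q1:lem:equivalence}, \eqref{q3:lem:equivalence}, \eqref{q2:lem:equivalence}. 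Finally, a direct substitution of the dictionary into \eqref{SGN_pressure}--\eqref{SGN_topography} shows that $p(\bu)=\tfrac12 g\waterh^2-\tfrac13\waterh s$ and $r(\bu)=g\waterh-\tfrac12 s+\tfrac14\ts$, so that \eqref{q:lem:equivalence} is the momentum component of \eqref{shallow_equations}, rewritten via $\tfrac{1}{\waterh}\bq\otimes\bq=\bv\otimes\bq$ and $\DIV(p(\bu)\polI_d)=\GRAD p(\bu)$.

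For the converse, assume $(\bu,q_1,q_2,q_3)$ solves \eqref{eq:lem:equivalence}. Equation \eqref{mass:lem:equivalence} makes the mass identity available. From the constraint $q_1=\waterh^2$ and the mass identity with $\phi=\waterh$ we get $\partial_t q_1+\DIV(\bv q_1)=\waterh\dot\waterh$; comparing with \eqref{q1:lem:equivalence} and using $q_3=\bq\ADV z$ forces $q_2=\waterh(\dot\waterh+\tfrac32\bv\ADV z)$. Then the mass identity with $\phi=\bv\ADV z$ combined with \eqref{q3:lem:equivalence} forces $\ts=\waterh\dot\sfk$, and the mass identity with $\phi=\dot\waterh+\tfrac32\bv\ADV z$ combined with \eqref{q2:lem:equivalence} forces $s=-\waterh\ddot\waterh-\tfrac32\waterh\dot\sfk$. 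Substituting these identities into \eqref{q:lem:equivalence} and using once more $p(\bu)=\tfrac12 g\waterh^2-\tfrac13\waterh s$ and $r(\bu)=g\waterh-\tfrac12 s+\tfrac14\ts$ recovers the momentum component of \eqref{shallow_equations}; hence $\bu$ solves \eqref{shallow_equations}--\eqref{pressure_topography_SGN}.

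The difficulty is bookkeeping rather than structural: one must treat $\ddot\waterh$ and $\dot\sfk$ as genuine second-order space--time operators, so the identifications $p(\bu)=\tfrac12 g\waterh^2-\tfrac13\waterh s$ and $r(\bu)=g\waterh-\tfrac12 s+\tfrac14\ts$ have to be verified as formal identities between differential expressions; and every reduction of $\partial_t q_i+\DIV(\bv q_i)$ to a material derivative $\waterh\dot\phi$ must actually invoke the mass equation. It is precisely this constraint that dictates the particular source terms $q_2-\tfrac32 q_3$, $-s$, $\ts$ on the right-hand sides of \eqref{q1:lem:equivalence}--\eqref{q3:lem:equivalence} and the coefficients $-\tfrac13$, $-\tfrac12$, $\tfrac14$ appearing in \eqref{q:lem:equivalence}.
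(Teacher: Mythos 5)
Your proposal is correct and follows essentially the same route as the paper: the same dictionary $q_1=\waterh^2$, $q_3=\bq\ADV z$, $q_2=\waterh\dot\waterh+\tfrac32 q_3$, $\ts=\waterh\dot\sfk$, $s=-\waterh\ddot\waterh-\tfrac32\ts$, the same use of the mass equation to convert $\partial_t(\waterh\phi)+\DIV(\waterh\bv\phi)$ into $\waterh\dot\phi$, and the same algebraic identifications $p(\bu)=\tfrac12 g\waterh^2-\tfrac13\waterh s$ and $r(\bu)=g\waterh-\tfrac12 s+\tfrac14\ts$. The only difference is presentational: you isolate the transport identity as a single reusable device applied to three choices of $\phi$, which is a slightly cleaner packaging of the paper's step-by-step computations.
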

\begin{proof}
  Assume that $\bu$ solves
  \eqref{shallow_equations}--\eqref{pressure_topography_SGN}. We
  are going to show that $\bu\eqq(\waterh,\bq)$
  solves \eqref{mass:lem:equivalence}-\eqref{q1q3:lem:equivalence}. \\
  \textup{(i)} Let $q_1 \eqq \waterh^2$ and $q_3 \eqq \bq\ADV z$ be
  defined as in \eqref{q1q3:lem:equivalence}. In addition, let us set
  $q_2\eqq -\waterh^2\DIV\bv + \tfrac32 q_3$.  Using the mass
  conservation equation, these definitions imply that
\begin{align*}
\partial_t q_1 + \DIV (\bv q_1) &= \partial_t \waterh^2 + \DIV(\bv\waterh^2)
= \waterh \partial_t \waterh + \waterh \bv\ADV\waterh=-\waterh^2\DIV\bv =  q_2 -\tfrac32 q_3.
\end{align*}
Hence \eqref{q1:lem:equivalence} holds true.\\
\textup{(ii)} Let us define
$\ts \eqq \partial_t q_3 + \DIV (\bv q_3)$
 and $s\eqq -\waterh\ddot{\waterh} - \tfrac32 \ts$.  Using again mass
conservation and the identity
$q_2= \waterh \dot \waterh + \tfrac32 q_3$ (recall that $\dot\waterh = -\waterh\DIV\bv$), we infer that
\begin{align*}
\partial_t q_2 + \DIV (\bv q_2) & =
\waterh (\partial_t \dot\waterh + \bv\ADV\dot\waterh) +\tfrac32( \partial_t q_3 + \DIV(\bv q_3) )
= \waterh\ddot\waterh +\tfrac32 \ts=-s.
\end{align*}
This shows that \eqref{q2:lem:equivalence} holds true. Notice also
that \eqref{q3:lem:equivalence} holds true as well since this is the definition of $\ts$. \\
\textup{(iii)}
Recalling that we have defined $q_3\eqq \waterh \bv\ADV z$, the definition of
$\dot{\sfk}$ in \eqref{SGN_topography} gives $\waterh\dot{\sfk}=\partial_t q_3
+\DIV(\bv q_3) =\ts$.
Then, using that $s\eqq -\waterh\ddot{\waterh} - \tfrac32 \ts$,
the pressure defined in \eqref{SGN_pressure} is rewritten as
follows:
\begin{align*}
p(\bu)
&= \tfrac12 g\waterh^2 +  \waterh(\tfrac13 \waterh\ddot{\waterh} + \tfrac12\waterh\dot\sfk)
= \tfrac12 g\waterh^2 +  \waterh(\tfrac13 \waterh\ddot{\waterh} +\tfrac12\ts) \\
&= \tfrac12 g\waterh^2 +  \tfrac13\waterh (\waterh\ddot{\waterh} +\tfrac32\ts)
 =\tfrac12 g\waterh^2 -\tfrac13 \waterh s.
\end{align*}
This is exactly the form of the pressure term in \eqref{q:lem:equivalence}.
Moreover,  using that $\waterh\dot{\sfk}=\ts$ and $\waterh\ddot{\waterh} = -s -\tfrac32\ts$,
the source term induced by the topography in \eqref{SGN_topography}, $r(\bu)$, becomes
\begin{align*}
  r(\bu) = g \waterh + \waterh (\tfrac12 \ddot{\waterh} + \dot{\sfk})
  = g \waterh -\tfrac12s -\tfrac34\ts +\ts =  g \waterh -\tfrac12s +\tfrac14\ts.
\end{align*}
\ie we obtain
$r(\bu)\GRAD z = (g\waterh -\tfrac12 s +\tfrac14 \ts)\GRAD z$, which is
exactly the form of the source term in \eqref{q:lem:equivalence}. In
conclusion, we have established that \eqref{mass:lem:equivalence} to
\eqref{q1q3:lem:equivalence} hold true if $\bu$ solves
\eqref{shallow_equations}--\eqref{pressure_topography_SGN}.

We now prove the converse.  Let us assume that $(\bu,q_1,q_2,q_3)$ solves
\eqref{mass:lem:equivalence}--\eqref{q1q3:lem:equivalence}. \\
\textup{(iv)}
Again we set $\bu\eqq(\waterh,\bq)$ and $\bv\eqq \waterh^{-1} \bq$. Let us set
$\dot\sfk \eqq \partial_t(\bv\ADV z) + \bv \ADV(\bv\ADV z)$. Then
using~\eqref{q3:lem:equivalence} and \eqref{q1q3:lem:equivalence} we
obtain $\waterh \dot\sfk = \ts$. Similarly, mass conservation implies that
$\waterh \dot\waterh = \partial_t(\waterh^2) + \DIV(\bv\waterh^2)$.
This identity, together with $q_1\eqq \waterh^2$ and~\eqref{q1:lem:equivalence}, gives
$\waterh \dot\waterh = q_2 -\tfrac32 q_3$. Then
\[
\waterh\ddot\waterh
= \partial_t(\waterh \dot\waterh) + \DIV(\bv\waterh\dot\waterh)
= \partial_t(q_2-\tfrac32 q_3) + \DIV(\bv(q_2-\tfrac32 q_3)) = -s -\tfrac32\ts.
\]
This implies that $s=-\waterh (\ddot\waterh +\tfrac32 \dot\sfk)$. Let
us set $p\eqq\tfrac12 g\waterh^2-\tfrac13 \waterh s$.  Then
$p=\tfrac12 g\waterh^2+ \waterh^2 (\tfrac13\ddot\waterh +\tfrac12
\dot\sfk)$.
This is the expression of the pressure
in~\eqref{SGN_pressure}. \\
\textup{(v)}
Finally let us set
$r\eqq g\waterh -\tfrac12 s +\tfrac14\ts$.  Then the above computations
imply that
$r= g\waterh +\tfrac12 \waterh (\ddot\waterh +\tfrac32 \dot\sfk) +
\tfrac14\waterh\dot\sfk$, \ie $r= g\waterh +\tfrac12 \waterh\ddot\waterh + \dot\sfk$. This is the expression
of the source term $r$ in~\eqref{SGN_topography}. Hence we have established that $\bu$ solves
\eqref{shallow_equations}--\eqref{pressure_topography_SGN}. This completes the proof.
\end{proof}

The following is another way to reformulate Lemma~\ref{lem:equivalence}, which we exploit in the next section:
\begin{proposition}[Co-dimension 2]
\label{prop:equivalence}
Let $\bu:\Dom\CROSS (0,T)\to \Real_+\CROSS \Real^d$ be a smooth
function.  Then $\bu$ solves the dispersive Serre model
\eqref{shallow_equations}--\eqref{pressure_topography_SGN}
if and only if $(\bu,q_1,q_2,q_3)$ solves the quasilinear first-order system
\eqref{mass:lem:equivalence}--\eqref{q3:lem:equivalence} on the co-dimension 2 manifold
$\{(\waterh,\bq,q_1,q_2,q_3)\in \Real_+\CROSS
\Real^d\CROSS\Real_+\CROSS\Real^2 \st q_1=\waterh^2,\ q_3=\bq\ADV
z\}$.
\end{proposition}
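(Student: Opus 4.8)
My plan is to read Proposition~\ref{prop:equivalence} off Lemma~\ref{lem:equivalence}: the two statements are the same equivalence written in two different languages, so almost nothing new has to be proved. Introduce the set
$\mathcal{M}\eqq\{(\waterh,\bq,q_1,q_2,q_3)\in\Real_+\CROSS\Real^d\CROSS\Real_+\CROSS\Real^2\st q_1=\waterh^2,\ q_3=\bq\ADV z\}$.
The two algebraic identities~\eqref{q1q3:lem:equivalence} say precisely that the graph $(\waterh,\bq,q_1,q_2,q_3)$ takes its values in $\mathcal{M}$; hence ``$(\bu,q_1,q_2,q_3)$ solves~\eqref{mass:lem:equivalence}--\eqref{q1q3:lem:equivalence}'' and ``$(\bu,q_1,q_2,q_3)$ solves the evolution system~\eqref{mass:lem:equivalence}--\eqref{q3:lem:equivalence} along a trajectory that stays on $\mathcal{M}$'' are literally the same condition, and the asserted equivalence with the dispersive Serre model~\eqref{shallow_equations}--\eqref{pressure_topography_SGN} is then exactly the content of Lemma~\ref{lem:equivalence}.

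In more detail, for the forward implication one starts from a smooth solution $\bu$ of~\eqref{shallow_equations}--\eqref{pressure_topography_SGN} and sets $q_1\eqq\waterh^2$, $q_3\eqq\bq\ADV z$, $q_2\eqq-\waterh^2\DIV\bv+\tfrac32 q_3$ as in steps (i)--(iii) of the proof of Lemma~\ref{lem:equivalence}; by construction the trajectory lies on $\mathcal{M}$, and Lemma~\ref{lem:equivalence} furnishes~\eqref{mass:lem:equivalence}--\eqref{q3:lem:equivalence}. For the converse, a smooth solution of~\eqref{mass:lem:equivalence}--\eqref{q3:lem:equivalence} whose trajectory stays on $\mathcal{M}$ satisfies~\eqref{q1q3:lem:equivalence} by the very definition of $\mathcal{M}$, so steps (iv)--(v) of the proof of Lemma~\ref{lem:equivalence} apply verbatim and return $\bu$ as a solution of~\eqref{shallow_equations}--\eqref{pressure_topography_SGN}. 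That $\mathcal{M}$ has co-dimension $2$ is immediate, since the two defining functions $q_1-\waterh^2$ and $q_3-\bq\ADV z$ are smooth and their differentials, which already contain the independent slots $q_1$ and $q_3$, are everywhere linearly independent.

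The only substantive point is the qualifier ``quasilinear first-order''. Here the auxiliary quantities $s$ and $\ts$ that enter the momentum flux and the topography source in~\eqref{q:lem:equivalence} are the closure variables governed by~\eqref{q2:lem:equivalence}--\eqref{q3:lem:equivalence}, and one must check that once they are expressed through those two equations the fluxes and sources of~\eqref{mass:lem:equivalence}--\eqref{q3:lem:equivalence} depend on $(\waterh,\bq,q_1,q_2,q_3)$ and its first-order derivatives only. This is exactly the effect for which $q_1,q_2,q_3$ were introduced: by the identities $\ts=\waterh\dot\sfk$ and $s=-\waterh(\ddot\waterh+\tfrac32\dot\sfk)$ recorded in steps (ii)--(iii) of the proof of Lemma~\ref{lem:equivalence}, the second-order dispersion operators $\ddot\waterh$ and $\dot\sfk$ hidden in $p(\bu)$ and $r(\bu)$ are traded for first-order transport of $q_2$ and $q_3$, so $p$ and $r$ — and hence the whole right-hand side of~\eqref{q:lem:equivalence} — become local first-order expressions of the state. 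Assembling these substitutions and reading off the principal part is routine; the only inversion it involves is by a manifestly positive scalar factor, which appears when one contracts~\eqref{q:lem:equivalence} with $\GRAD z$ in order to solve for $\ts$. I expect this closure/quasilinearity bookkeeping, rather than the equivalence itself, to be the only place where the proof asks for a genuine computation.
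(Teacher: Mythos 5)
Your reduction to Lemma~\ref{lem:equivalence} is exactly what the paper intends: Proposition~\ref{prop:equivalence} is stated there without a separate proof, explicitly as ``another way to reformulate'' the lemma, with the algebraic constraints~\eqref{q1q3:lem:equivalence} recast as membership in the co-dimension~2 manifold, so your argument is correct and follows essentially the same route. Your closing paragraph on quasilinearity goes beyond anything the paper verifies (and the claim that substituting $s$ and $\ts$ from~\eqref{q2:lem:equivalence}--\eqref{q3:lem:equivalence} into the momentum flux yields a purely first-order expression is delicate, since $\GRAD(\waterh s)$ would then carry second derivatives of $q_2$; the intended reading is rather that $s,\ts$ remain auxiliary unknowns closed by the two constraints), but this side remark does not affect the validity of the equivalence itself.
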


\begin{remark}[Initial conditions] Let $(\waterh_0,\bq_0)$ be the initial state for
\eqref{shallow_equations}--\eqref{pressure_topography_SGN}. Then the corresponding
initial state for
\eqref{mass:lem:equivalence}--\eqref{q1q3:lem:equivalence} is
\begin{align*}
&\waterh(\bx,0)=\waterh_0(\bx),\qquad\quad
\bq(\bx,0)=\bq_0(\bx), \qquad\quad
q_1(\bx,0) = \waterh_0(\bx)^2,\\
&q_3(\bx,0) = \bq_0(\bx)\SCAL\GRAD z,\quad
q_2(\bx,0) = -\waterh_0(\bx)\DIV\bv_0(\bx) +\tfrac32 q_3(\bx,0).
\end{align*}%
\end{remark}%
\subsection{The relaxation}
In this section we propose a technique to relax the constraints
$\{q_1 = \waterh^2; \ q_3=\bq\ADV z\}$ in \eqref{mass:lem:equivalence}--\eqref{q3:lem:equivalence} so that the relaxed system becomes hyperbolic and remains compatible with dry states. This is
done by adapting arguments introduced in \citep{Gu_Po_To_Ke_FAKE_SGN_2019} and initially based
on ideas from \citep{Favrie_Gavrilyuk_2017}.

We use the same notation as in \citep{Favrie_Gavrilyuk_2017} and
\citep{Gu_Po_To_Ke_FAKE_SGN_2019}. The state variable is denoted $\bu\eqq(\waterh,\bq,q_1,q_2,q_3)$. We set
$\eta\eqq \frac{q_1}{\waterh}$, $\omega\eqq \frac{q_2}{\waterh}$, and
we introduce the new variable $\beta\eqq\frac{q_3}{\waterh}$. Let
$\epsilon$ be a small length scale which we will later think of as
being some mesh-size when the model is approximated in space. Let
$\olambda$ be a non-dimensional number of order unit (say for instance
$\olambda=1$).

Let $\Gamma\in C^2(\Real;[0,\infty))$ be a smooth non-negative
function such that $\Gamma(1) = 0$ and $\Gamma'(1)=0$.  As in
\citep{Gu_Po_To_Ke_FAKE_SGN_2019} we replace $s$ in
\eqref{q2:lem:equivalence} by
$\overline{\lambda} g\frac{\waterh^2}{\epsilon}
\Gamma'(\frac{\eta}{\waterh})$.
Notice that $g\frac{\waterh^2}{\epsilon}$ scales like the square of a
velocity, which is what one should expect since
$-\overline{\lambda} g\frac{\waterh^2}{\epsilon}
\Gamma'(\frac{\eta}{\waterh})$
should be an ansatz for
$-\waterh(\ddot\waterh+\tfrac32\dot{\sfk})$. The purpose of this term
is to enforce the ratio $\frac{\eta}{\waterh}$ to be close to $1$ (\ie
$q_1\rightarrow\waterh^2$ as $\epsilon \to 0$).

Let $\Phi\in C^0(\Real;\Real)$ be a function such that
$\xi\Phi(\xi) \ge 0$ for all $\xi\in \Real$. Let
$\waterh_0$ be a reference water height.  We are going to replace
$\ts$ in \eqref{q3:lem:equivalence} by
$\overline{\lambda}g\waterh_0
\frac{\waterh}{\epsilon}\Phi((\bv\cdot\GRAD z
-\beta)/\sqrt{g\waterh_0})$.
The purpose of this term is to enforce
$(\bv\SCAL\GRAD z -\beta)/\sqrt{g\waterh_0}$ to be close to $0$ (\ie
$q_3\rightarrow\bq\cdot\GRAD z$ as $\epsilon \to 0$).

Recalling that $\bv\eqq\waterh^{-1}\bq$, $q_1\eqq\waterh \eta$,
$q_2 \eqq \waterh \omega $ and $q_3\eqq\waterh\beta$, the relaxed system we
consider in the rest of the paper is formulated as follows:
\begin{subequations}\label{full_relaxed}
\begin{align}
&\partial_t \waterh + \DIV\bq = 0,\label{mass:relax}\\\
&\partial_t \bq + \DIV(\bv\otimes \bq) + \GRAD p_\epsilon(\bu)
=-r_\epsilon(\bu) \GRAD z,\label{q:relax}\\
&\partial_t q_1 + \DIV (\bv q_1) = q_2 -\tfrac32 \bq\ADV z,\label{q1:relax}\\
&\partial_t q_2 + \DIV (\bv q_2) = -s_\epsilon(\bu), \label{q2:relax}\\
&\partial_t q_3 + \DIV (\bv q_3) = \ts_\epsilon(\bu), \label{q3:relax} \\
&p_\epsilon(\bu) \eqq \tfrac12 g\waterh^2 + \tp_\epsilon(\bu),\qquad
\tp_\epsilon(\bu) \eqq -\tfrac13 \tfrac{\overline{\lambda}g}{\epsilon}
\waterh^2\big(\eta\Gamma'(\tfrac{\eta}{\waterh})-2\waterh \Gamma(\tfrac{\eta}{\waterh})\big),
\label{pressure_relax} \\
& r_\epsilon(\bu) \eqq g\waterh -\tfrac12 s_\epsilon(\bu) + \tfrac14\ts_\epsilon(\bu), \label{source_relax} \\
&s_\epsilon(\bu) \eqq \overline{\lambda} g\tfrac{\waterh^2}{\epsilon}
\Gamma'(\tfrac{\eta}{\waterh}),\qquad\qquad
\ts_\epsilon(\bu) \eqq \overline{\lambda}g\waterh_0 \tfrac{\waterh}{\epsilon}\Phi(\tfrac{\bv\GRAD z
-\beta}{\sqrt{g\waterh_0}}). \label{penalty_relax}
\end{align}
\end{subequations}

\begin{remark}[$q_3$]\label{Rem:q3}
Notice that we have replaced $-\frac32 q_3$ on the right-hand side of \eqref{q1:relax}
by $-\tfrac32 \bq\ADV z$.  This is consistent since  $q_3$ should be equal to $\bq\ADV z$. This
change is justified by the energy argument in Lemma~\ref{Lem:relax_energy}.
\end{remark}

\begin{remark}[Definition of $\Phi$]\label{Rem:def_of_Phi} In the applications
    reported at the end of the paper we take $\Phi(\xi)=\xi$, but we
    prefer to present the method with a generic function $\Phi$ to
    emphasize the generality of the relaxation procedure.
\end{remark}

\begin{remark}[Pressure] \label{Rem:pressure}%
The expression for
    the pressure $\tp_\epsilon(\bu)$ in \eqref{pressure_relax} is
    fully justified by the energy argument in
    Lemma~\ref{Lem:relax_energy}. After replacing $s$ by
  $\overline{\lambda} g\frac{\waterh^2}{\epsilon}
  \Gamma'(\frac{\eta}{\waterh})$
  in \eqref{q:lem:equivalence}, we observe that the definition of
  $\tp_\epsilon(\bu)$ is compatible with the definition
  $\tp(\bu) = -\frac13 \waterh s$ up to the remainder
  $\frac13 \frac{\overline{\lambda}g}{\epsilon} 2 \waterh^3
  \Gamma(\tfrac{\eta}{\waterh})$,
  which indicates that \eqref{full_relaxed} may not be consistent
    with \eqref{eq:lem:equivalence}. This is not the case since this
  remainder is small when the ratio $\frac{\eta}{\waterh}$ is close to
  $1$. More precisely, using Taylor expansions at $1$, we have
\begin{align*}
\Gamma(1)=0=\Gamma(\frac{\eta}{\waterh})+ \waterh^{-1} (\waterh -\eta) \Gamma'(\frac{\eta}{\waterh})
+  \waterh^{-2} \calO(\waterh -\eta)^2,
\end{align*}
which shows that $2\waterh\Gamma(\tfrac{\eta}{\waterh})/
\eta |\Gamma'(\frac{\eta}{\waterh})| = \calO(\tfrac{|\eta-\waterh|}{\eta})$.
Hence the ratio $2\waterh\Gamma(\tfrac{\eta}{\waterh})/
\eta |\Gamma'(\frac{\eta}{\waterh})|$ is small as
$\eta\to \waterh$, which proves that $\tp_\epsilon(\bu)$ is indeed a consistent approximation
of $\tp(\bu) = -\frac13 \waterh s$ as $\eta\to \waterh$.
\end{remark}
\begin{remark}[Comparisons with
    \citep{Gu_Po_To_Ke_FAKE_SGN_2019}] The incomplete system
    considered in \citep{Gu_Po_To_Ke_FAKE_SGN_2019} consists of solving
    for $(\waterh,\bq,q_1,q_2)\tr$ so that
\begin{subequations}\label{inc_relaxed}
\begin{align}
&\partial_t \waterh + \DIV\bq = 0,\\
&\partial_t \bq + \DIV(\bv\otimes \bq) + \GRAD (\tfrac12 g\waterh^2 + \tp_\epsilon(\bu))
= -g\waterh \GRAD z, \label{inc_relaxed_mt}\\
&\partial_t q_1 + \DIV (\bv q_1) = q_2, \label{inc_relaxed_q1}\\
&\partial_t q_2 + \DIV (\bv q_2) = -s_\epsilon(\bu), \\
&\tp_\epsilon(\bu) \eqq -\tfrac13 \tfrac{\overline{\lambda}g}{\epsilon}
\waterh^2\big(\eta\Gamma'(\tfrac{\eta}{\waterh})-2\waterh \Gamma(\tfrac{\eta}{\waterh})\big),
\qquad s_\epsilon(\bu) \eqq \overline{\lambda} g\tfrac{\waterh^2}{\epsilon}
\Gamma'(\tfrac{\eta}{\waterh})\label{inc_press_and_source}.
\end{align}
\end{subequations}
Notice that the expression for the relaxed non-hydrostatic pressure
\eqref{inc_press_and_source} is the same as in~\eqref{full_relaxed}.
But, the relaxed pressure \eqref{inc_press_and_source} for the
incomplete system only approximates $\frac13 \waterh^2\ddot \waterh$,
whereas the relaxed pressure in \eqref{full_relaxed} approximates the
$\waterh^2(\frac13\ddot \waterh +\frac12\dot \sfk)$. The right-hand
sides of \eqref{inc_relaxed_mt} and \eqref{inc_relaxed_q1} are also
different. We also note that the incomplete system \eqref{inc_relaxed}
does not contain an evolution equation for the quantity $q_3$ which is
an \textit{ansatz} for $\bq\SCAL\GRAD z$ as in \eqref{full_relaxed}.
Thus, although the two relaxation techniques bear resemblance, they
are significantly different. This difference is numerically
illustrated in Section~\ref{Sec:santos_triangle}. Let us note though
that the two systems are equivalent when the topography is trivial.
\end{remark}
The following result is the relaxed counterpart of Lemma~\ref{lem:energy}.
\begin{lemma}\label{Lem:relax_energy}
Let $\bu$ be a smooth solution to \eqref{mass:relax}--\eqref{penalty_relax}. Then
the following holds true:
$\partial_t \calE_\epsilon(\bu) + \DIV(\bcalF_\epsilon(\bu)) =\tfrac14 \ts_\epsilon(\bu) (\beta - \bv\ADV z) \le 0$,
with
\begin{subequations}
\begin{align}
\calE_\epsilon(\bu) &\eqq \tfrac12 g \bal{(\waterh +  z)^2}  + \tfrac12 \waterh \bv^2
+ \tfrac16\waterh \omega^2 +  \tfrac18 \waterh \beta^2
+\tfrac{\overline{\lambda}g}{3\epsilon}\waterh^3\Gamma(\tfrac{\eta}{\waterh}),\label{energy_relax} \\
\bcalF_\epsilon(\bu) &\eqq \bv (\calE_\epsilon(\bu) \bal{-\tfrac12 g z^2} + p_\epsilon(\bu)).
\end{align}
\end{subequations}
\end{lemma}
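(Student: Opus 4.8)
The plan is to mimic the standard shallow-water energy computation: multiply each evolution equation in \eqref{mass:relax}--\eqref{q3:relax} by the appropriate ``conjugate'' variable, add the results, and repeatedly use the mass equation to convert terms of the form $\partial_t(\waterh\,\phi) + \DIV(\bv\,\waterh\,\phi)$ into $\waterh\,\dot\phi$. Concretely, I would first establish the transport identity that for any smooth scalar $\phi$, $\partial_t(\waterh\phi)+\DIV(\bv\waterh\phi)=\waterh(\partial_t\phi+\bv\ADV\phi)$ thanks to \eqref{mass:relax}; this will be the workhorse. Then I would contract \eqref{q:relax} with $\bv$ to produce $\partial_t(\tfrac12\waterh\bv^2)+\DIV(\cdots)$ together with a $\bv\ADV\GRAD p_\epsilon$ term and the source term $-r_\epsilon(\bu)\,\bv\ADV\GRAD z$; the hydrostatic part $\tfrac12 g\waterh^2$ combines with the mass equation and the topography term $g\waterh\,\bv\ADV\GRAD z$ to yield the $\tfrac12 g(\waterh+z)^2$ contribution minus $\tfrac12 g z^2$ (which is why the flux carries the $-\tfrac12 g z^2$ correction), using $\partial_t z = 0$.

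Next I would handle the three auxiliary equations. Multiply \eqref{q2:relax} by $\tfrac13\omega$ and \eqref{q3:relax} by $\tfrac14\beta$; using the transport identity these give $\partial_t(\tfrac16\waterh\omega^2)+\DIV(\tfrac16\waterh\omega^2\bv) = \tfrac13\omega\,(\partial_t q_2+\DIV(\bv q_2)) = -\tfrac13\omega\, s_\epsilon(\bu)$ and similarly $\partial_t(\tfrac18\waterh\beta^2)+\DIV(\tfrac18\waterh\beta^2\bv)=\tfrac14\beta\,\ts_\epsilon(\bu)$. For the $\Gamma$-term, I would multiply \eqref{q1:relax} by the factor that makes $\tfrac{\overline\lambda g}{3\epsilon}\waterh^3\Gamma(\eta/\waterh)$ appear: since $\eta=q_1/\waterh$, one has $\partial_t(\tfrac{\overline\lambda g}{3\epsilon}\waterh^3\Gamma(\tfrac{\eta}{\waterh}))+\DIV(\bv\cdot)$ expands, via the chain rule and the transport identity, into a multiple of $\Gamma'(\eta/\waterh)$ times $(\partial_t q_1+\DIV(\bv q_1))$ plus a $\Gamma(\eta/\waterh)$ remainder coming from the $\waterh^3$ factor; substituting \eqref{q1:relax} replaces $\partial_t q_1 + \DIV(\bv q_1)$ by $q_2-\tfrac32\bq\ADV z = \waterh(\omega - \tfrac32\bv\ADV z)$. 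The precise bookkeeping here is the reason the relaxed pressure $\tp_\epsilon(\bu)$ in \eqref{pressure_relax} has the particular combination $\eta\Gamma'-2\waterh\Gamma$: the $\Gamma'$ piece pairs against the $\GRAD\tp_\epsilon$ flux term from the momentum equation, and the $-2\waterh\Gamma$ piece is exactly what is needed to absorb the remainder from differentiating $\waterh^3$.

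Finally I would collect all contributions. The $s_\epsilon$ terms cancel between the $\omega^2$ balance and the $\Gamma$ balance (by design of $s_\epsilon=\overline\lambda g\tfrac{\waterh^2}{\epsilon}\Gamma'$ and the factor $\tfrac13$), the $\GRAD p_\epsilon$ flux terms assemble into the $\bv\, p_\epsilon(\bu)$ part of $\bcalF_\epsilon$, and the leftover $-\tfrac32\bv\ADV z$ piece from \eqref{q1:relax} combines with the $\tfrac14\beta\ts_\epsilon$ and the source term $-r_\epsilon\bv\ADV\GRAD z$ (with $r_\epsilon = g\waterh-\tfrac12 s_\epsilon+\tfrac14\ts_\epsilon$) so that everything telescopes except for the residual $\tfrac14\ts_\epsilon(\bu)(\beta-\bv\ADV z)$. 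The sign $\le 0$ is then immediate from the hypothesis $\xi\Phi(\xi)\ge 0$: writing $\xi = (\bv\ADV z-\beta)/\sqrt{g\waterh_0}$, one has $\tfrac14\ts_\epsilon(\bu)(\beta-\bv\ADV z) = -\tfrac14\overline\lambda g\waterh_0\tfrac{\waterh}{\epsilon}\sqrt{g\waterh_0}\,\xi\Phi(\xi)\le 0$. I expect the main obstacle to be the $\Gamma$-term bookkeeping in the third step --- keeping track of which factor of $\waterh$ is differentiated and verifying that the $s_\epsilon$ cancellation and the $-2\waterh\Gamma$ correction line up exactly --- since that is where the specific algebraic form of $\tp_\epsilon$ is forced; the remaining terms are routine transport-identity manipulations.
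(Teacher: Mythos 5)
Your proposal follows essentially the same route as the paper's proof: the same multipliers ($g(\waterh+z)$ for mass, $\bv$ for momentum, $\tfrac{1}{3\waterh}s_\epsilon(\bu)$ for the $q_1$ equation, $\tfrac13\omega$ and $\tfrac14\beta$ for the $q_2$ and $q_3$ equations), the same transport identity, the same cancellation pattern leaving the residual $\tfrac14\ts_\epsilon(\bu)(\beta-\bv\ADV z)$, and the same sign argument via $\xi\Phi(\xi)\ge 0$. You also correctly identify the two delicate points the paper emphasizes --- the $-2\waterh\Gamma$ correction in $\tp_\epsilon$ and the use of $-\tfrac32\bq\ADV z$ rather than $-\tfrac32 q_3$ in \eqref{q1:relax} --- so the proposal is sound.
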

\begin{proof}
\textup{(i)} The first part of the argument is standard.
We multiply the mass conservation equation by $g (\waterh+z)$
  and the momentum equation by $\bv$, use the mass conservation
  equation, and add the results:
\begin{multline*}
\partial_t (\tfrac12 g \waterh^2 +gz\waterh + \tfrac12 \waterh \bv^2) + \DIV ( \bv(\tfrac12 g \waterh^2 +
g z\waterh + \tfrac12 \waterh\bv^2+\tfrac12 g \waterh^2))
+\bv\ADV p_\epsilon(\bu) \\
= \tfrac12 s_\epsilon(\bu) \bv\ADV z - \tfrac14\ts_\epsilon(\bu) \bv\ADV z.
\end{multline*}
\textup{(ii)} We now multiply  \eqref{q1:relax}
by $\tfrac{\overline{\lambda}g}{3\epsilon} \waterh \Gamma'(\tfrac{\eta}{\waterh})
\eqq \frac{1}{3\waterh}s_\epsilon(\bu)$ and proceed as in the proof of Proposition~3.1
in \citep{Gu_Po_To_Ke_FAKE_SGN_2019}. Recalling that $\epsilon$ is constant and using
the definition of $\tp(\waterh,\eta)$, we obtain
\[
\partial_t (\tfrac{\overline{\lambda}g}{3\epsilon}\waterh^3\Gamma(\tfrac{\eta}{\waterh}))
+ \DIV(\tfrac{\overline{\lambda}g}{3\epsilon}\waterh^3\Gamma(\tfrac{\eta}{\waterh})\bv)
+ \tp(\waterh,\eta) \DIV \bv
= \tfrac13 s_\epsilon(\bu) \omega - \tfrac12 s_\epsilon(\bu) \bv\ADV z.
\]
Notice that, as mentioned in Remark~\ref{Rem:q3}, replacing
$-\frac32 q_3$ by $-\frac32 \waterh \bv\ADV z$ in \eqref{q1:relax} is important
here. Without this substitution we would have
$-\tfrac12 s_\epsilon(\bu)\beta$ on the right-hand side of the above identity instead of
$- \tfrac12 s_\epsilon(\bu) \bv\ADV z$. \\
\textup{(iii)} We continue by multiplying \eqref{q2:relax} by $\frac13\omega$ and we obtain
\[
\partial_t (\tfrac16\waterh \omega^2) + \DIV(\tfrac16\waterh \omega^2\bv)
=-\tfrac{1}{3} s_\epsilon(\bu) \omega.
\]
\textup{(iv)} Finally we multiply \eqref{q3:relax} by $\frac14\beta$ and we obtain
\[
\partial_t (\tfrac18\waterh \beta^2) + \DIV(\tfrac18\waterh \beta^2\bv)
=\tfrac{1}{4} \ts_\epsilon(\bu) \beta.
\]
\textup{(iv)}
We conclude by adding the four identities obtained above, and
we obtain $\partial_t \calE_\epsilon(\bu) + \DIV(\bcalF_\epsilon(\bu)) =\tfrac14 \ts_\epsilon(\bu) (\beta - \bv\ADV z)$.
Notice that the definition of $\ts_\epsilon(\bu)$ and $\Phi(\xi)$ gives
\[
\tfrac{1}{4} \ts_\epsilon(\bu)(\beta -\bv\ADV z)
=  -\tfrac{g\waterh_0}{4} \tfrac{\waterh}{\epsilon}\Phi(\tfrac{\bv\GRAD z
-\beta}{\sqrt{g\waterh_0}})(\bv\ADV z-\beta) \le 0.
\]
This completes the proof.
\end{proof}
\bal{
\begin{corollary}
  Let $\bu\eqq(\waterh,\bq,q_1, q_2, q_3)\tr$ be a smooth solution to
  \eqref{mass:relax}--\eqref{penalty_relax} and assume $\epsilon$ is
  constant (\ie does not depend on $\bx$ and $t$). Let $T > 0$ be some
  final time. Assume that the boundary conditions for $\bu$ are such
  that $\bcalF(\bu)\SCAL\bn_{\partial D} = 0$ for all
  $t\in(0,T)$. Then, there is a $c(\bu_0)$ such that:
\begin{equation}
  \int_D\left(\waterh^3\Gamma(\tfrac{\eta}{\waterh})\right)_{|t = T} \leq c(\bu_0)\epsilon
\end{equation}
\end{corollary}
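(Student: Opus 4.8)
The plan is to integrate the relaxed energy balance of Lemma~\ref{Lem:relax_energy} over $D$ and over $(0,T)$, and then to retain, on the left-hand side, only the non-negative penalty contribution $\tfrac{\overline{\lambda}g}{3\epsilon}\waterh^3\Gamma(\tfrac{\eta}{\waterh})$ to $\calE_\epsilon$. Concretely, I would first integrate the identity $\partial_t\calE_\epsilon(\bu)+\DIV(\bcalF_\epsilon(\bu))=\tfrac14\ts_\epsilon(\bu)(\beta-\bv\ADV z)$ over $D$: by the divergence theorem the flux term produces the boundary integral $\int_{\partial D}\bcalF_\epsilon(\bu)\SCAL\bn_{\partial D}$, which vanishes by the assumed boundary condition, while the right-hand side is pointwise non-positive by Lemma~\ref{Lem:relax_energy}. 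Hence $\frac{d}{dt}\int_D\calE_\epsilon(\bu)\,d\bx\le 0$, and integrating in time from $0$ to $T$ gives $\int_D\big(\calE_\epsilon(\bu)\big)_{|t=T}\,d\bx\le\int_D\big(\calE_\epsilon(\bu)\big)_{|t=0}\,d\bx$.

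Next I would use that all five summands in the definition~\eqref{energy_relax} of $\calE_\epsilon(\bu)$ are non-negative (because $g>0$, $\waterh\ge 0$ and $\Gamma\ge 0$), so that keeping only the last one on the left-hand side yields
\[
\frac{\overline{\lambda}g}{3\epsilon}\int_D\big(\waterh^3\Gamma(\tfrac{\eta}{\waterh})\big)_{|t=T}\,d\bx\;\le\;\int_D\big(\calE_\epsilon(\bu)\big)_{|t=0}\,d\bx.
\]
The decisive point is that the right-hand side is bounded independently of $\epsilon$: for the initial data prescribed in the remark on initial conditions one has $q_1(\bx,0)=\waterh_0(\bx)^2$, i.e.\ $\eta(\bx,0)=\waterh_0(\bx)$, so $\Gamma(\eta(\bx,0)/\waterh_0(\bx))=\Gamma(1)=0$ and the $\epsilon^{-1}$ term in $\big(\calE_\epsilon(\bu)\big)_{|t=0}$ disappears; what remains is the purely hydrostatic and kinetic part together with $\tfrac16\waterh\omega^2+\tfrac18\waterh\beta^2$ of the initial state, which is finite (by smoothness of $\bu_0$ and, say, boundedness of $D$) and does not depend on $\epsilon$. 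Multiplying the displayed inequality by $\tfrac{3\epsilon}{\overline{\lambda}g}$ then gives the claim with $c(\bu_0)\eqq\tfrac{3}{\overline{\lambda}g}\int_D\big(\calE_\epsilon(\bu)\big)_{|t=0}\,d\bx$.

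The computation itself is elementary; the one step that genuinely must be spelled out is the use of the compatibility of the initial data with the constraint $q_1=\waterh^2$. If $\Gamma(\eta(\cdot,0)/\waterh_0)$ did not vanish, then $\int_D\big(\calE_\epsilon(\bu)\big)_{|t=0}\,d\bx$ would scale like $\epsilon^{-1}$, the estimate would collapse to a vacuous $O(1)$ bound, and the corollary as stated would be false. So the only subtlety is that the ``$c(\bu_0)$'' appearing in the statement is the $\epsilon$-independent part of the initial energy of the \emph{well-prepared} initial state, which is exactly what makes the right-hand side $O(\epsilon)$.
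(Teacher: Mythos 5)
Your proof is correct and follows exactly the route the paper intends: the corollary is a direct consequence of integrating the energy identity of Lemma~\ref{Lem:relax_energy} over $D\times(0,T)$, using the sign of the right-hand side and of the boundary flux, discarding the remaining non-negative contributions to $\calE_\epsilon$, and invoking the well-prepared initial condition $q_1(\cdot,0)=\waterh_0^2$ so that $\Gamma(\eta/\waterh)|_{t=0}=\Gamma(1)=0$ and the initial energy is independent of $\epsilon$. Your closing remark that this well-preparedness is the one genuinely essential hypothesis (left implicit in the statement) is a correct and worthwhile observation.
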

}
\begin{remark}[\eqref{energy_relax} vs. \eqref{DSV_energy}]
  By comparing the expression~\eqref{energy_relax} to \eqref{DSV_energy}, and
  recalling that $\omega$ is meant to be an approximation for
  $\dot{\waterh} + \frac32\bv\ADV z$ and $\beta$ an approximation for
  $\bv\ADV z$, we see that
  $\tfrac16\waterh \omega^2 + \tfrac18 \waterh \beta^2 =
  \tfrac16\waterh(\omega^2 + \tfrac34\beta^2)$
  in \eqref{energy_relax} is the approximation of
  $\tfrac16 \waterh \Big(\big(\dot{\waterh} + \tfrac32 (\bv\ADV
  z)\big)^2 +\tfrac34 (\bv\ADV z)^2 \Big)$
  in \eqref{DSV_energy}. As in \citep{Gu_Po_To_Ke_FAKE_SGN_2019}, we
  also observe the extra term in the energy
  $\waterh^3\Gamma(\tfrac{\eta}{\waterh})$, but this was shown in Remark~\ref{Rem:pressure}
  to be a small, positive quantity.
\end{remark}
\begin{remark}[Energy balance]
Without topography, the statement in Lemma~\ref{Lem:relax_energy} gives an exact energy balance
as in \cite{Favrie_Gavrilyuk_2017} and \citep{Gu_Po_To_Ke_FAKE_SGN_2019}.
\end{remark}
\subsection{Alternative reformulations of the dispersive Serre model}\label{Sec:alternative}
There are many ways to reformulate the dispersive Serre model into a system of first-order
conservation equations with sources. For instance, in \cite{Gavrilyuk_1996} the authors
  reformulated the model with a flat bottom as a first-order system
  with a constraint on the divergence of the velocity (equations
  (5.12)--(5.15) therein). The model from \citep{Gavrilyuk_1996} is also used in
\cite{Bristeau_Mangenay_SaniteMarie_Seguin_2015} (equation (50) therein). Then,
 following \citep{Gavrilyuk_1996} and
\citep{Bristeau_Mangenay_SaniteMarie_Seguin_2015}, \cite{Escalante_Dumbser_Castro_2019} included some
effects of the topography into this first-order system with the
assumption that the topography was mildly varying (\ie dropping the
terms containing $\DIV(\GRAD z)$ and $\|\GRAD z\|^2$). They also relaxed the
system to enforce hyperbolicity. To put the present work in
perspective with respect to these techniques, we recall the
reformulations proposed in
\citep{Gavrilyuk_1996,Bristeau_Mangenay_SaniteMarie_Seguin_2015,Escalante_Dumbser_Castro_2019}
(but contrary to \citep{Escalante_Dumbser_Castro_2019}
we keep all the effects induced by the topography).

The starting point is again the system
\eqref{shallow_equations}--\eqref{pressure_topography_SGN} with the
topography effects
\bal{from~\cite{green_naghdi_1976,seabra-santos_renouard_temperville_1987}.}  Let
$w \eqq -\tfrac12\waterh\DIV\bv + \tfrac34\bv\ADV z$. Then, using that
$\dot{\waterh} = -\waterh\DIV\bv$ and using the notation as above
$\dot\sfk \eqq \partial_t(\bv\ADV z) + \bv\ADV(\bv\ADV z)$, we have
that
\[
D_t w:= \partial_t w + \bv\ADV w = \frac12\ddot{\waterh} + \frac34\dot\sfk
= \frac32\Big(\frac13\ddot{\waterh} + \frac12\dot\sfk\Big)
= \frac32 \frac{\overline{p}(\bu)}{\waterh},
\]
where
$\overline{p}(\bu):=\tfrac13\waterh\ddot{\waterh} +
\tfrac12\waterh\dot\sfk$.
Then, using mass conservation, the above can be written as
$\partial_t(\waterh w) + \DIV(\bu \waterh w) =
\tfrac32\overline{p}(\bu)$.
Combining everything, another reformulation of the dispersive
Serre model with topography is given as follows:
\begin{subequations} \label{SainteMarie_etal_Castro}
\begin{align}
&\partial_t \waterh + \DIV\bq = 0,\\
&\partial_t \bq + \DIV(\bv\otimes \bq) + \GRAD (\tfrac12g\waterh^2 + \waterh \overline{p}(\bu))
=-(g \waterh + \tfrac32\overline{p}(\bu) + \tfrac14\waterh\dot\sfk)\GRAD z,\\
&\partial_t (\waterh w) + \DIV (\bu \waterh w) = \tfrac32\overline{p}(\bu),\\
&\DIV\bv + \frac{w - \tfrac34\bv\cdot\GRAD z}{\frac12\waterh} = 0. \label{SainteMarie_etal_Castro_constraint}
\end{align}
\end{subequations}
We recover Eq.~(1) in \citep{Escalante_Dumbser_Castro_2019},
up to the term $\tfrac14\waterh\dot\sfk$, which is neglected therein,
and up to the coefficient $\frac34$ in \eqref{SainteMarie_etal_Castro_constraint}.
The above system bears some resemblance
to \eqref{eq:lem:equivalence}. In particular we observe that $\waterh \overline{p}(\bu)=-\frac13 s(\bu)$ and
 $\waterh w = \frac12 q_2$. Notice however that
in our system~\eqref{eq:lem:equivalence} the two constraints~\eqref{q1q3:lem:equivalence}
are purely algebraic (\ie these constraints are enforced in the phase space),
whereas the constraint \eqref{SainteMarie_etal_Castro_constraint} is differential.
As a result, the technique proposed in \citep{Escalante_Dumbser_Castro_2019} to relax
the differential constraint \eqref{SainteMarie_etal_Castro_constraint}
is fundamentally different from \eqref{penalty_relax}.
\bal{
\begin{remark}[Reformulation in \cite{nieto_parisot_2018}]
  In~\cite{nieto_parisot_2018}, the authors reformulate the Serre
  model with full topography effects.  The authors introduce two
  constraints for the reformulation:
  $w_s\eqq -\waterh\DIV\bv + \bv\SCAL\GRAD z$ and
  $\tilde{w}\eqq w_s + \frac12\waterh\DIV\bv$. Combining these two
  constraints into one yields:
  $\tilde{w}\eqq - \frac12\waterh\DIV\bv + \bv\SCAL\GRAD z.$ This
  constraint is similar, up to the constant on the $\bv\SCAL\GRAD z$
  term, to the constraint defined in \S\ref{Sec:alternative}. Thus,
  one can repeat the above process and derive the first-order
  formulation introduced in~\citep{nieto_parisot_2018}. Notice that
  in~\citep{nieto_parisot_2018}, the constraint is again differential
  and thus different from the proposed reformulation in this work.
\end{remark}
}
\section{Numerical Illustrations}
\label{Sec:numerical_illustrations}
In this section we illustrate the performance of the relaxation algorithm~\eqref{full_relaxed}.

\subsection{Numerical details} \label{Sec:Numer_details}
We use a continuous finite element technique similar to that
described
in~\citep{Guermond_Quezada_Popov_Kees_Farthing_2018,Gu_Po_To_Ke_FAKE_SGN_2019}.
The finite elements are piecewise linear and the time stepping is done with
the third-order, three step, strong stability preserving Runge Kutta technique (SSP RK(3,3)).
We set $\Gamma(x) = 3(x-1)^2$ and $\Phi(\xi) = \xi$ in
\eqref{penalty_relax}.  We also take $\overline\lambda = 1$ and
$\epsilon$ is the local mesh-size.  Denoting by
$\{\varphi_i\}_{i\in\calV}$ the global shape functions, the local
mesh-size is defined to be
$\epsilon_i\eqq (\int_{\Dom} \varphi_i \diff x)^{\frac1d}$, where
$d\in\{1,2\}$ is the space dimension.  The numerical viscosity is
the entropy viscosity defined in
\citep[\S{6}]{Guermond_Quezada_Popov_Kees_Farthing_2018}. The
estimate of the maximum wave speed in the elementary Riemann
problems is detailed in~\S{4.2} therein. The method is positivity
preserving and well-balanced. The detailed implementation of the
method is reported in~\cite{Tovar_PhD}. \bal{The boundary conditions for each
numerical illustration are detailed in the respective subsections. We consider
either Dirichlet or wall boundary conditions. Though it is possible to construct
a treatment to enforce outflow boundary conditions, we note that at the moment
it is not immediately obvious how to solve the associated Riemann Problem for the
hyperbolic relaxed system~\eqref{full_relaxed}.}

\subsection{Accuracy with fixed $\epsilon$} In a first series of
  tests, not reported here for brevity, we estimate the accuracy of
  the method by fixing the relaxation parameter $\epsilon$, \ie
  $\epsilon$ does not depend on the mesh-size.  These tests show that
  the method gives an approximation of the solution to
  \eqref{full_relaxed} that is third-order accurate in time and
  second-order accurate in space. It is common in the literature to
  fix the relaxation parameter to estimate the accuracy of the space and time approximation;
  we refer the reader for instance to
  \cite[Tab.~1]{Escalante_Dumbser_Castro_2019}, \cite[Tab.~1]{dumbser_2020},
\cite[Fig.~6]{Favrie_Gavrilyuk_2017}

\subsection{Steady state solution}
We now demonstrate the accuracy of the relaxation technique using
  the steady state solution described in
  \S\ref{Sec:steady_state_solution} with $\epsilon$ depending on the
  mesh-size as explained in \S\ref{Sec:Numer_details}.  The
bathymetry and the water height are given in
Lemma~\ref{Lem:steady_state_solution}.  We set $\waterh_0=\SI{1}{m}$,
$a=0.2$, $g=\SI{9.81}{m s^{-2}}$. This gives a discharge value of
  $q=\sqrt{5.886}\SI{}{m^2s^{-1}}$ and coefficient
  $r=\sqrt{0.5}\SI{}{m^{-1}}$ given by the expressions
  in~\eqref{eq:Lem:steady_state_solution}.  The simulations are done
with $\Dom=(-\SI{10}{m},\SI{15}{m})$. The discharge is enforced at the
inflow boundary $x=-\SI{10}{m}$. The water height is enforced at
$x=-\SI{10}{m}$ and $x=\SI{15}{m}$.  We show in
Table~\ref{Tab:steady_state} the numerical results obtained at
$t=\SI{1000}{s}$. The number of grid points is shown in the leftmost
column. The relative errors on the water height measured in the
$L^1$-norm, $E_1\eqq\|\waterh-\waterh_h\|_{L^1}\!/\|\waterh\|_{L^1} $,
and the relative errors measured in the $L^\infty$-norm,
$E_2\eqq\|\waterh-\waterh_h\|_{L^\infty}\!/\|\waterh\|_{L^\infty}$,
are shown in the second and third columns. The relative $L^1$-norm of
the difference between $\waterh_h^2$ and $q_{1h}$,
$E_3\eqq \|\waterh_h^2-q_{1h}\|_{L^1}/\|q_{1h}\|_{L^1}$, and the
relative $L^1$-norm of the difference between $q_h\partial_x z$ and
$q_{3h}$,
$E_4\eqq\|q_h\partial_x z - q_{3h}\|_{L^1}/\|q_h\partial_x z\|_{L^1}$,
are shown in the fourth and fifth columns.  We observe that all
  the quantities converge with a first-order rate with respect to the
  mesh-size. This is consistent since we chose the relaxation
  parameter in the relaxed system~\eqref{full_relaxed} to be
  proportional to the local mesh-size which gives a first-order
  approximation of the fully coupled
  system~\eqref{eq:lem:equivalence}. Similar convergence results are
  reported in \cite[Tab.~2]{dumbser_2020}.
\begin{table}[h!]\small
\centering
\begin{tabular}{|c| c c| c c |c c| c c|} \hline
\Nglob
& \multicolumn{2}{c|}{$E_1$}
& \multicolumn{2}{c|}{$E_2$}
& \multicolumn{2}{c|}{$E_3$}
& \multicolumn{2}{c|}{$E_4$}\\
 \hline
 100 & 1.98E-03 & Rate & 7.55E-03 & rate & 8.54E-05 & Rate & 1.33E-01 & Rate \\ \hline
 200 & 1.09E-03 & 0.86 & 3.15E-03 & 1.26 & 3.83E-05 & 1.16 & 6.77E-02 & 0.97 \\ \hline
 400 & 4.23E-04 & 1.36 & 1.05E-03 & 1.58 & 1.76E-05 & 1.12 & 3.40E-02 & 0.99 \\ \hline
 800 & 1.73E-04 & 1.29 & 4.07E-04 & 1.37 & 8.51E-06 & 1.05 & 1.70E-02 & 1.00 \\ \hline
1600 & 7.92E-05 & 1.13 & 1.82E-04 & 1.16 & 4.20E-06 & 1.02 & 8.51E-03 & 1.00 \\ \hline
3200 & 3.62E-05 & 1.13 & 8.51E-05 & 1.10 & 2.09E-06 & 1.01 & 4.25E-03 & 1.00 \\ \hline
6400 & 1.85E-05 & 0.97 & 4.31E-05 & 0.98 & 1.05E-06 & 1.00 & 2.13E-03 & 1.00 \\ \hline
\end{tabular}%
\caption{Steady state solution with topography.}%
\label{Tab:steady_state}%
\end{table}

\begin{figure}[h]
\begin{subfigure}{0.49\textwidth}
\includegraphics[trim=12 32 12 41,clip=,width=\textwidth]{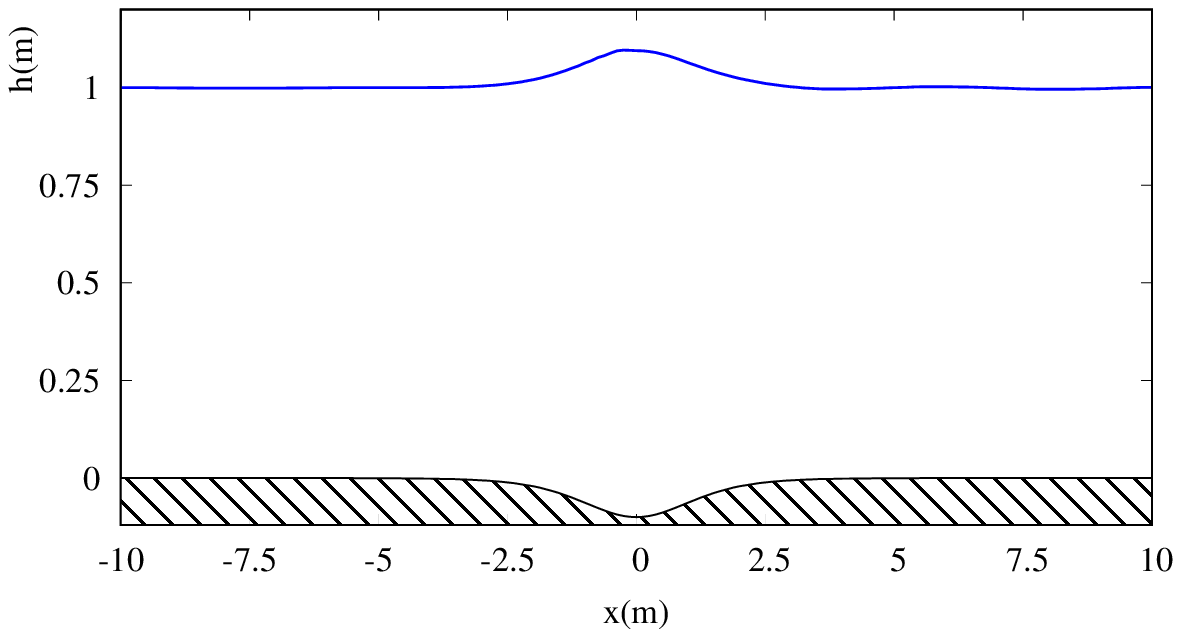}
\caption{Full system~\eqref{full_relaxed}} \label{Fg:1D_steady_state_with}
\end{subfigure} \hfill
\begin{subfigure}{0.49\textwidth}
\includegraphics[trim=12 32 12 41,clip=,width=\textwidth]{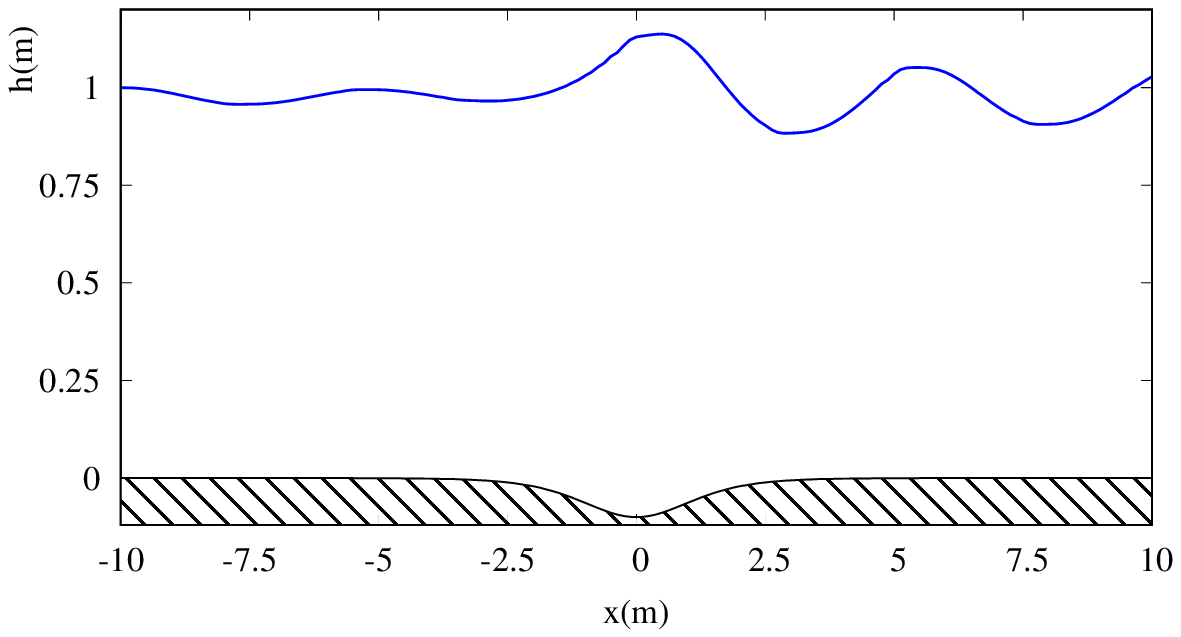}
\caption{Partial system \eqref{inc_relaxed} from \citep{Gu_Po_To_Ke_FAKE_SGN_2019}}\label{Fg:1D_steady_state_without}
\end{subfigure}
\vspace*{-\baselineskip}
\caption{Steady state solution with topography.}\label{Fg:1D_steady_state}
\vspace*{-\baselineskip}
\end{figure}
We show in Figure~\ref{Fg:1D_steady_state_with} the graph of the
steady-state solution obtained numerically by
solving~\eqref{penalty_relax} (the results are shown only over the
interval $(-10,10)$). The grid is composed of $200$ $\polP_1$ elements and the
final time is $t=\SI{1000}{s}$.  In
Figure~\ref{Fg:1D_steady_state_without} we show the graph of the
steady-state solution obtained by solving the system \eqref{inc_relaxed} from
\citep{Gu_Po_To_Ke_FAKE_SGN_2019}. The effects of the missing terms is clear. It would be interesting to
see which of these two states can be reproduced experimentally.
\subsection{1D Solitary wave propagating over a triangular obstacle}\label{Sec:1D_solitary_over_triangle}
\label{Sec:santos_triangle}
We now consider the experiments of a solitary wave propagating over a
triangular obstacle described in
\cite{seabra-santos_renouard_temperville_1987} and focus on the study
of reflected waves. The goal here is to compare
  the system \eqref{full_relaxed} to the experiments and to the
  incomplete model \eqref{inc_relaxed} introduced in
  \citep{Gu_Po_To_Ke_FAKE_SGN_2019}. We discover  that
  the prediction of the amplitude of the reflected waves  are
  exceptionally accurate when the topography corrections are fully account
  for. It seems that this observation and a thorough examination of the data provided in
  \citep{seabra-santos_renouard_temperville_1987} for the triangular obstacle had never been done
  before.

In the experiments reported in
\citep{seabra-santos_renouard_temperville_1987}, a reflected wave is
generated when the solitary wave passes over the triangular obstacle
for some values of the water depth $\waterh_0$ and incident wave
amplitude $\alpha$.
We focus on the water depths of
  $\SI{15}{cm}$ and $\SI{12.5}{cm}$ since the authors claim that the
  height of the reflected wave is negligible when the water depth is
  larger.
The bathymetry in the experiments is a triangular obstacle
centered at $x=\SI{0}{m}$ with a base of $\SI{14.1}{cm}$ and height
$\SI{10}{cm}$. This triangular obstacle can be reproduced with
  the function $z(x) = \max(0.1 - \tfrac{10}{7.05}|x|,0)$.  However,
  the original Serre
  model~\eqref{shallow_equations_def_fluxes}--\eqref{pressure_topography_SGN}
  contains terms proportional to $\LAP z(\bx)$ (and
  $\GRAD(\LAP z(\bx))$) which produce a Dirac measure and the
  derivative of a Dirac measure at the PDE level. We handle this difficulty
by introducing a mesh-dependent smoothing of the bathymetry as follows:
  $z(x) = \max(0.1 - \tfrac{10}{7.05}\frac{x^2}{\abs{x} +
    d\sqrt{\waterh_0 h}},0),$
  where $h$ is the mesh-size and $\waterh_0$ is the water depth. The
  constant $d$ is chosen so that on a mesh composed of 1600 $\polP_1$ elements,
  the smoothing parameter $d\sqrt{\waterh_0 h}$ is equal to
  $0.075$. We have numerically verified that this smoothing procedure
  is consistent in the sense that the solution converges in the
  $L^1$-norm as we refine the mesh.

The computational domain is set to $\Dom=(-\SI{20}{m},
\SI{20}{m})$.
We reproduce 9 of the experiments shown in
\citep[Tab.~2]{seabra-santos_renouard_temperville_1987} for
$\waterh_0=\SI{15}{cm}$ and $\waterh_0=\SI{12.5}{cm}$ with the values
of the incident wave height $\alpha$ given in Table
\ref{table:shelf_ref}. Let $\tilde{\waterh}(x,t)$ and
  $\tilde{u}(x,t)$ be the water height and velocity of an solitary
  wave:
\begin{align}
  \tilde{\waterh}(x,t) = \waterh_0+\frac{\alpha}{(\cosh(r (x - x_0-ct)))^2},\qquad
  \tilde{u}(x,t) = c\frac{\tilde{\waterh}(x,t)-\waterh_0}{\tilde{\waterh}(x,t)},\label{eq:initial_solitary}%
\end{align}%
with wave speed $c = \sqrt{g (\waterh_0 + \alpha)}$ and width
$r = \sqrt{\frac{3 \alpha}{4 \waterh_0^2(\waterh_0 + \alpha)}}$. To be consistent with the experimental measurements, we initiate the
solitary wave at $x_0 = -15\waterh_0$ with
\begin{equation}
\waterh(x,0) = \text{max}\{\tilde{\waterh}(x,0)-z(x),0\}, \qquad
q(x,0) = \tilde{u}(x,0)\waterh(x,0).\label{solitary_wave}
\end{equation}
Here we take $g=\SI{9.81}{ms^{-2}}$.
We then measure the height of the reflected
wave at the location $x = -25\waterh_0$.  We run the computations with
a mesh composed of 3200 $\polP_1$ elements with CFL 0.1 until final time
$t=\SI{10}{s}$. In Table~\ref{table:shelf_ref}, we report the results
of our computations for the system \eqref{full_relaxed} and the
incomplete system \eqref{inc_relaxed} (denoted by the index
$_{\text{inc}}$). In the table, $E_r$
is the relative difference between the computational and experiment
values (shown in \citep[Tab.~2]{seabra-santos_renouard_temperville_1987}) and is defined as $E_r=\frac{\alpha_r-\alpha_\text{Exp}}{\alpha_\text{Exp}}$ for the reflected wave. Note that the amplitude values reported in Table~\ref{table:shelf_ref} are in centimeters so a 10\% error is only $\SI{1}{mm}$. We see in the table that we have good agreement for
the full system \eqref{full_relaxed} while the incomplete system \eqref{inc_relaxed} gives
very poor agreement by largely overshooting the reflected wave
amplitudes.
\begin{table}[h!]
\centering
\begin{tabular}{||c c c c c c c||}
 \hline
 Exp. & $\waterh_0$ & $\alpha$ & $\alpha_{r}$ & $E_r$ &  $\alpha_{\text{inc}, r}$ & $E_{\text{inc},r}$ \\ [0.5ex]\hline\hline
 72 & 15.0 & 2.96 & 0.37 & -9.8\% & 0.61 & 49\% \\
 73 & 15.0 & 4.35 & 0.53 & -12\% & 1.07 & 78\% \\
 74 & 15.0 & 5.81 & 0.70 & 7.7\% & 1.64 & 150\% \\
 75 & 15.0 & 6.56 & 0.79 & -1.3\% & 1.93 & 140\% \\
 76 & 15.0 & 8.40 & 0.96 & -20\% & 2.56 & 110\% \\
 77 & 12.5 & 2.50 & 0.49 & -18\% & 0.75 & 25\%\\
 78 & 12.5 & 4.75 & 0.86 & 7.5\% & 1.66 & 110\%\\
 79 & 12.5 & 6.0 & 1.03 & 8.4\% & 2.21 & 130\% \\
 80 & 12.5 & 6.3 & 1.07 & 1.9\% & 2.34 & 120\%  \\
 [1ex]
 \hline
\end{tabular}
\caption{Numerical results for the passing of a solitary wave over a triangular
  obstacle (reflected waves). $\waterh_0$ is the still water depth;
  $\alpha$ is the amplitude of the incident wave;
  $\alpha_{r}$ is the amplitude of the reflected wave;
  $\alpha_{\text{inc},r}$ is the amplitude of the reflected wave for
  incomplete system \eqref{inc_relaxed}. All values are in \SI{}{cm}.}
\label{table:shelf_ref}
\end{table}
In Figures \ref{fig:triangle_newModel} and
\ref{fig:triangle_oldModel}, we show the graph of the free surface
$\waterh + z$ in the $(x,t)$-plane for Experiment 78 listed in
Table~\ref{table:shelf_ref} (the left panel shows the results for the
full system~\eqref{full_relaxed}; the right panel shows the results
for the incomplete system \eqref{inc_relaxed}).  The
effects of the missing terms are evident: we see that the incomplete
system \eqref{inc_relaxed} generates more and higher reflected waves, some of which are likely unphysical. Likewise, the incomplete system creates a wave of large amplitude when passing over the triangular peak, which we suspect is unphysical.
\begin{figure}
\centering
\begin{subfigure}{0.49\textwidth}
  \centering
    \includegraphics[trim={90 0 90 10},clip,width=\textwidth]{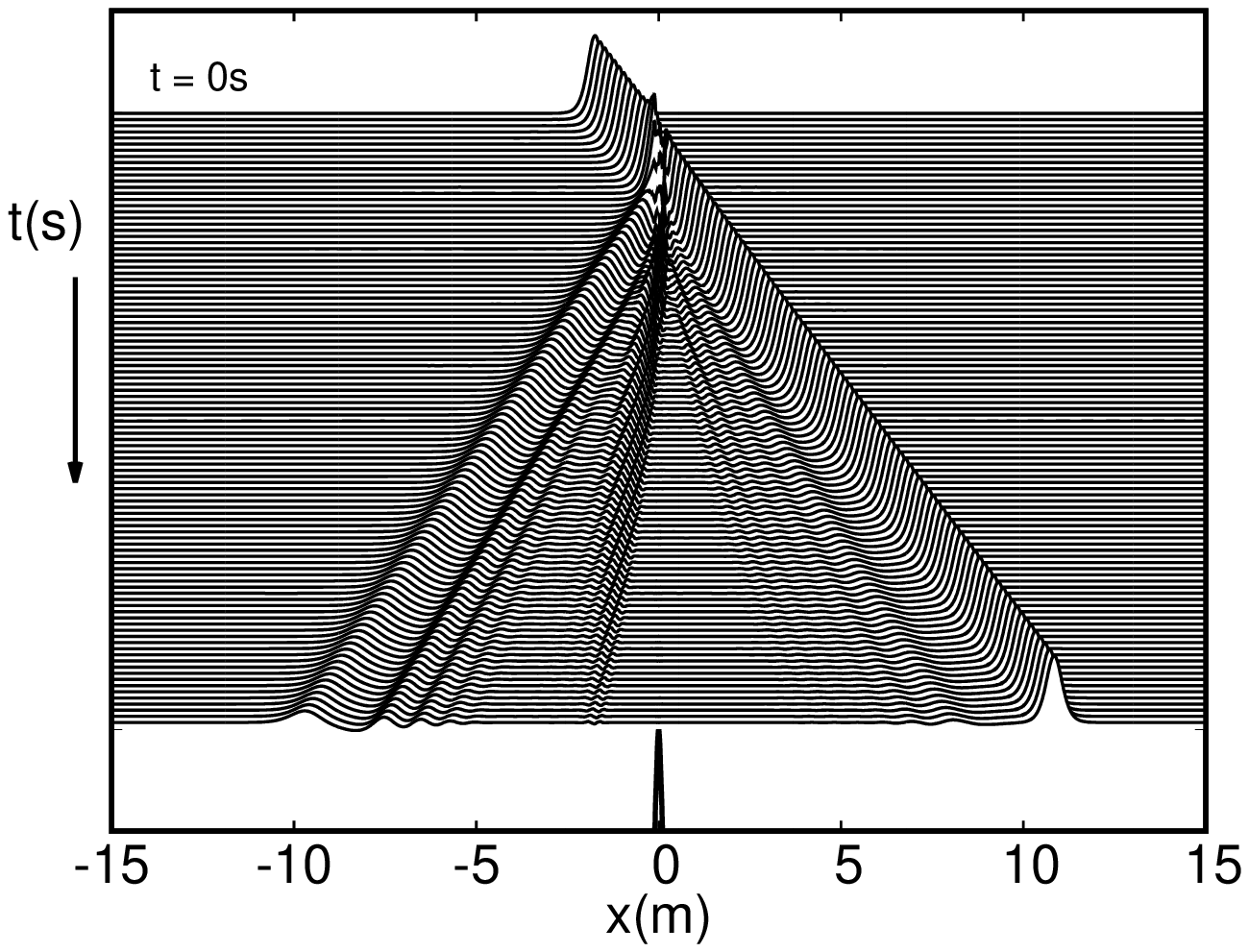}
    \caption{$(x,t)$-plane representation of $\waterh+z$
for the relaxed system \eqref{full_relaxed}.}\label{fig:triangle_newModel}%
\end{subfigure} \hfill
\begin{subfigure}{0.49\textwidth}
      \centering
\includegraphics[trim={90 0 90 10},clip,width=\textwidth]{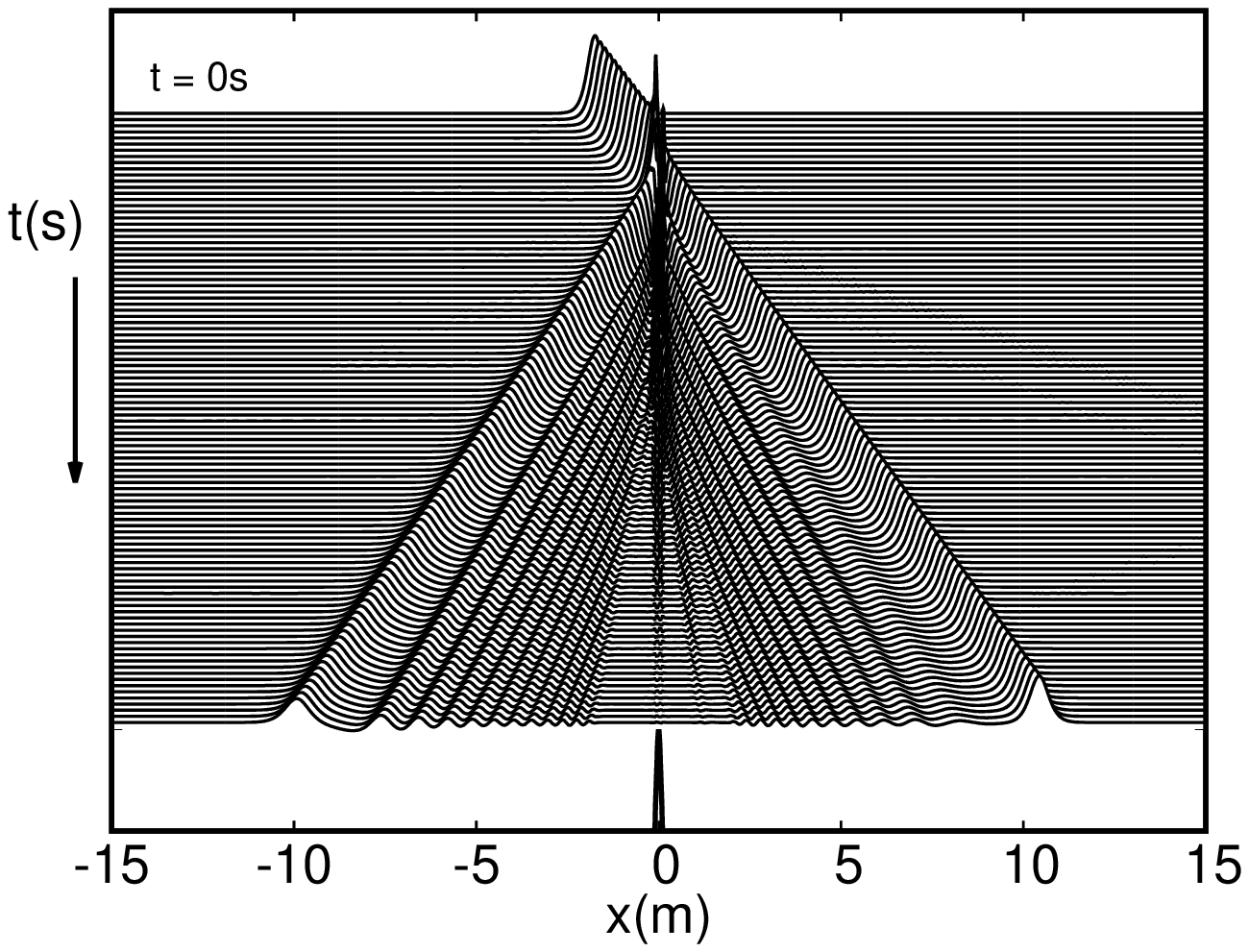}
\caption{$(x,t)$-plane representation of $\waterh+z$ for the incomplete, relaxed \eqref{inc_relaxed}.}\label{fig:triangle_oldModel}
\end{subfigure}
\caption{Solitary wave passing over a triangular obstacle for Exp.~78:
  $\waterh_0=\SI{12.5}{cm},\alpha=\SI{4.75}{cm}$. Final time
  $t=\SI{10}{s}$.} \label{fig:solitary_triangle}
\end{figure}
\subsection{1D Solitary wave propagating over a step} \label{Sec:1D_solitary_over_step}
We now consider the problem of a solitary wave propagating over a step
as described in \cite{seabra-santos_renouard_temperville_1987}. Again
here we compare the numerical results to the experimental data. In
particular, we focus on the fission phenomena of the solitary wave due
to the passing over the step. Recall that the fission phenomena is
defined to be the process by which an incident solitary wave evolves
into at least two (transmitted) solitary waves ranked in order of
decreasing amplitude and followed by a small dispersive tail.

 The computational domain is $\Dom=(\SI{-10}{m},\SI{30}{m})$.
The bathymetry for this problem is a step of height
$\SI{0.1}{m}$ defined over $x\geq\SI{0}{m}$ and $\SI{0}{m}$ elsewhere.
Similarly as in the previous section, we introduce a smoothed bathymetry profile
by setting  $z(x) = 0.1(\frac12 + \frac{1}{\pi} \arctan(\frac{x}{d\sqrt{\waterh_0 h}}))$
where $h$ is the mesh-size and $\waterh_0$ is the water depth. The constant $d$ is chosen as in
the previous section. Since the computational domain is finite, we limit the reflection
of waves at the left end of the domain by introducing an ``absorption
zone'' as described in~\cite{Tovar_PhD}. Here, the absorption zone is set to be $D_{\text{abs}} = (\SI{-10}{m},\SI{-5}{m})$.
\begin{table}[h!]\small
\centering
\begin{tabular}{||c c c c c c c c c||}
 \hline
 Exp. & $\waterh_0$ & $\alpha$ & $\alpha_{t1}$ & $E_1$ & $\alpha_{t2}$ & $E_2$ & $\alpha_{t3}$ & $E_3$  \\ [0.5ex]\hline\hline
 1 & 30.0 & 4.25 & 5.58 & 13\% & 1.18 & 44\% & -- & -- \\
 2 & 30.0 & 6.80 & 8.92 & 4.9\% & 1.78 & 11\% & -- & -- \\
 3 & 30.0 & 7.10 & 9.30 & 5.1\%& 1.85 & 6.3\% & -- & -- \\
 4 & 30.0 & 7.50 & 9.81 & 1\% & 1.94 & 11\% & -- & -- \\
 6 & 30.0 & 9.70 & 12.54 & -2.8\% & 2.43  & 2.1\% & -- & --\\
 7 & 25.0 & 1.78 & 2.40 & 8.1\% & 0.69 & -1.4\% & -- & --  \\
 8 & 25.0 & 2.57 & 3.61 & 15\% & 0.96 & 28\% & -- & -- \\
 9 & 25.0 & 3.84 & 5.42 & 14\% & 1.39 & 20\% & -- & -- \\
 10 & 25.0 & 5.75 & 7.96 & -0.25\% & 2.03 & 0\% & -- & --  \\
 11 & 25.0 & 7.17 & 9.79 & -5.3\% & 2.49 & 0.81\% & -- & --  \\
 20 & 20.0 & 1.63 & 2.57 & 8.0\% & 0.90 & -8.2\% & 0.19 & -57\% \\
 21 & 20.0 & 2.08 & 3.26  & 5.8\% & 1.17 & 7.3\%  & 0.21 & -25\%  \\
 22 & 20.0 & 2.43 & 3.77 & 5.9\% & 1.36 & 12\% & 0.24 & 20\%  \\
 23 & 20.0 & 2.93 & 4.49 & 4.4\% & 1.64 & 5.1\% &  0.27 & -21\%  \\
 24 & 20.0 & 3.65 & 5.48 & 3.2\% & 2.01 & 12\% & 0.32 & -3\% \\
 [1ex]
 \hline
\end{tabular}
\caption{Numerical results for the passing of a solitary wave over a shelf (transmitted waves). $\waterh_0$ is the still water depth; $\alpha$ is the amplitude of the incident wave; $\alpha_{tj}$ is the amplitude of the $j$th transmitted wave. All values are in \SI{}{cm}.}
\label{table:shelf_trans}
\end{table}

In the original experiments, seven wave gauges were placed along the
basin to measure the wave heights, and in particular, the height of
the transmitted waves. Recall that transmitted waves are waves that
form after the solitary wave passes over the step and separates into
multiple solitary waves (this can be seen in Figure
\ref{fig:solitary_step}). For our computations, we measure the
amplitude of the transmitted waves at $x = \SI{15}{m}$ since it was
stated in \citep{seabra-santos_renouard_temperville_1987} that ``a
length of about 15 m was required for such a wave-sorting process''.
(We note here that it's not clear at which gauge the transmitted waves
reported in \citep[Tab.~1]{seabra-santos_renouard_temperville_1987} were measured).
We reproduce 15 of the experiments shown in
  \citep[Tab.~1]{seabra-santos_renouard_temperville_1987} and list the
  different values of $\waterh_0$ and $\alpha$ used in Table
  \ref{table:shelf_trans}. The experiment numbers listed in
  Table \ref{table:shelf_trans} coincide with those of
  \citep[Tab.~1]{seabra-santos_renouard_temperville_1987}.  We
  run the computations with a mesh composed of 3200 $\polP_1$ elements with
  $\text{CFL}=0.1$ until the final time $t=\SI{20}{s}$ for Experiments
  1-10 and $t=\SI{25}{s}$ for Experiment 20-24.  In Table
\ref{table:shelf_trans}, we report the results of our computations and
compare the transmitted wave heights to those reported in
\citep{seabra-santos_renouard_temperville_1987}. In the table, $E_j$
is the relative difference between the computational and experiment
values and is defined as
$E_j=\frac{\alpha_j-\alpha_\text{Exp}}{\alpha_\text{Exp}}$ for each
$j$-th transmitted wave. Good agreement with the experimental data is
observed overall. In Figure \ref{fig:solitary_step}, we show the
  graph of the free surface $\waterh + z$ in the $(x,t)$-plane for
  Experiment 10 and 24 listed in Table~\ref{table:shelf_trans}.
\begin{figure}
\centering
\begin{subfigure}{0.49\textwidth}
  \centering
    \includegraphics[trim={100 0 105 10},clip,width=\textwidth]{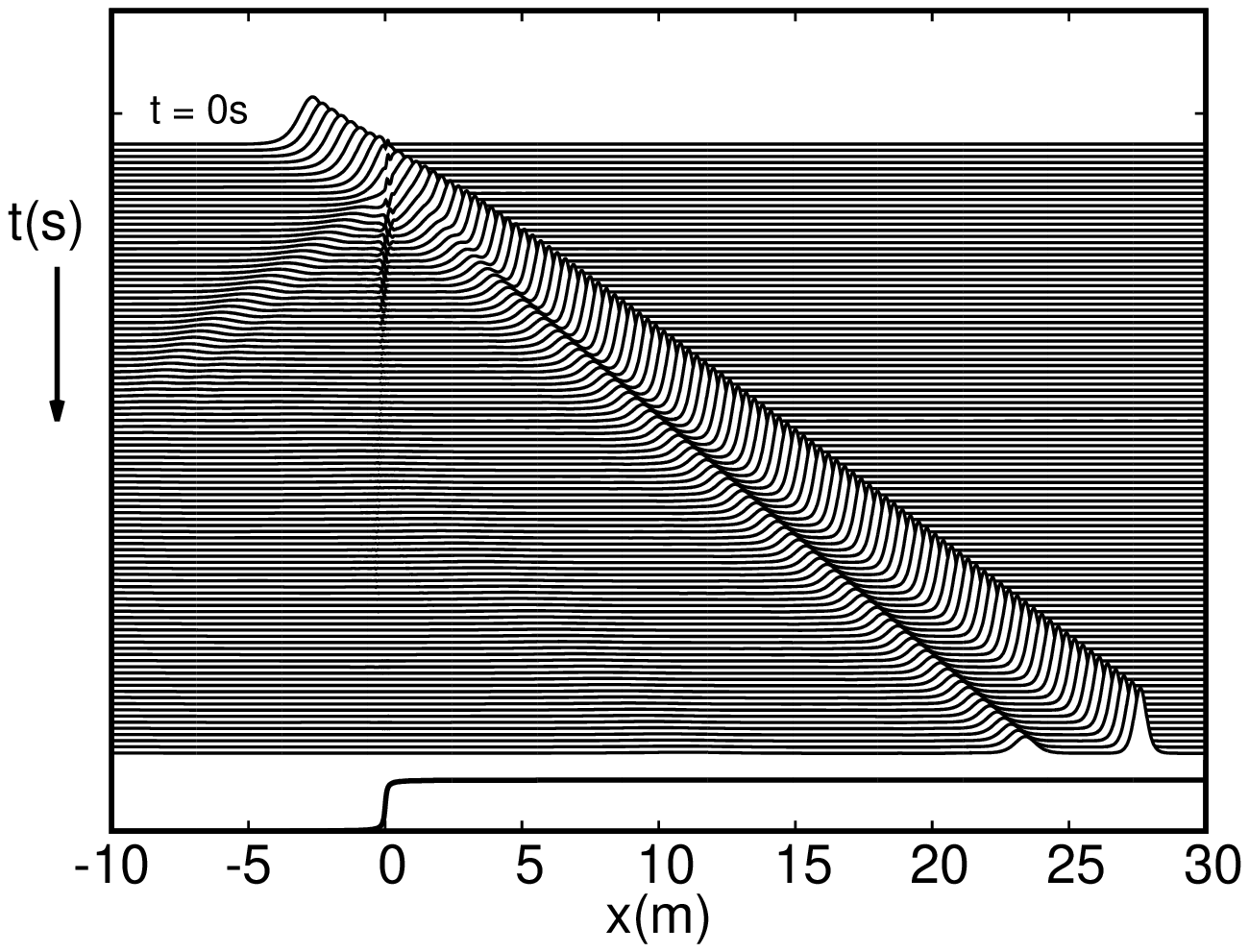}
\end{subfigure}
\begin{subfigure}{0.49\textwidth}
      \centering
\includegraphics[trim={100 0 105 10},clip,width=\textwidth]{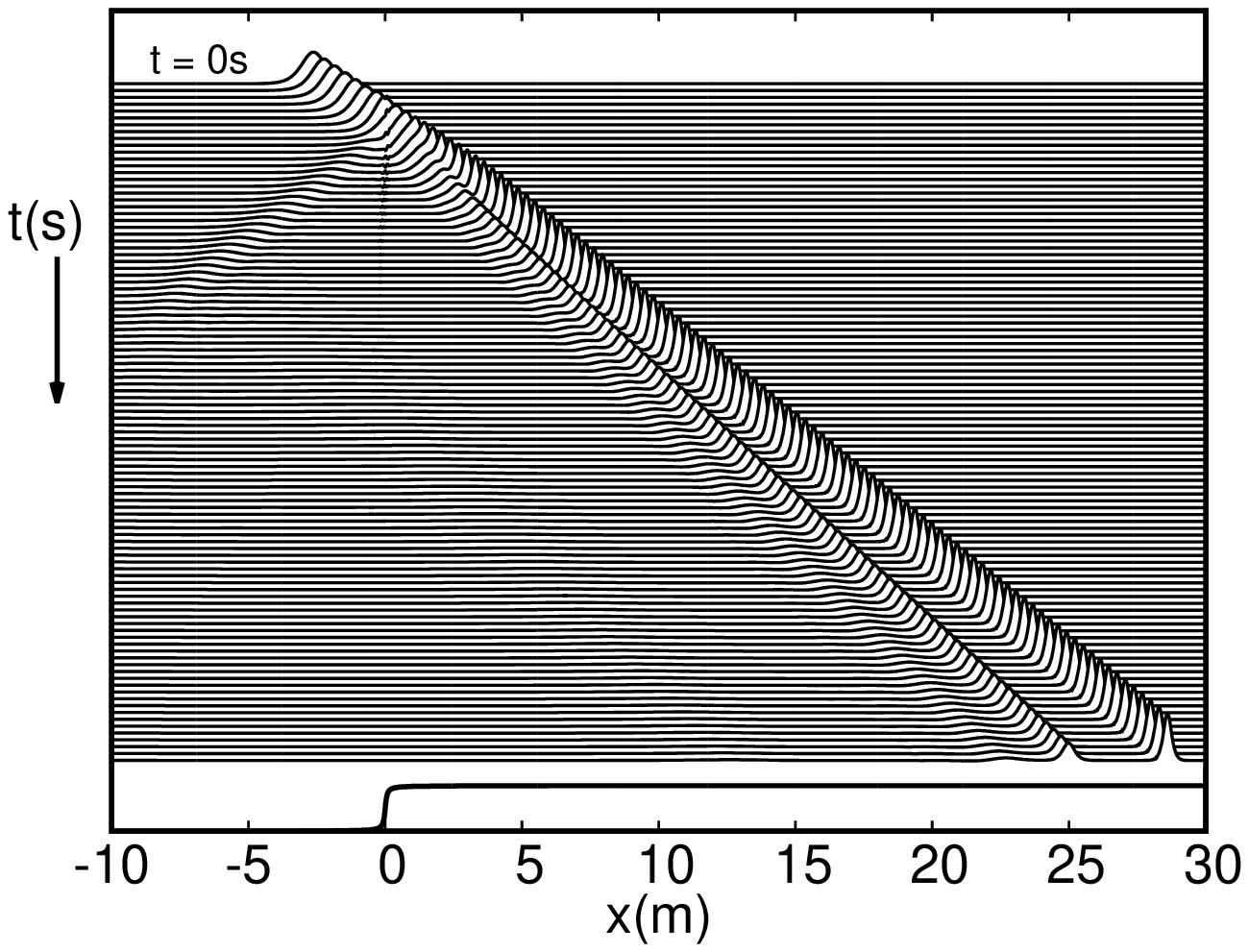}
\end{subfigure}
\caption{Time evolution plot of a solitary wave propagating over a discontinuous step
for Exp.~10 and Exp.~24. Final time $t=\SI{20}{s}$ and $t=\SI{25}{s}$, respectively.} \label{fig:solitary_step}
\end{figure}
\bal{
\subsection{1D Shoaling of solitary waves over sloped beach}\label{Sec:solitary_shoaling}
We now consider the 1994 experiments of~\cite{guibo1994} conducted at
LEGI (Laboratoire des \'Ecoulements G\'eophysiques et Industriels) in
Grenoble, France, to investigate the shoaling of solitary waves over a
sloped beach.

We consider 4 series of experiments proposed in~\citep{guibo1994} with a
reference water depth of $\waterh_0 = \SI{0.25}{m}$ and different solitary
wave amplitudes (see: Table~\ref{Tab:shoaling_table}). We simulate
the experiments in one spatial dimension and reproduce the bathymetry as follows:
\[
  z(x) =
  \begin{cases}
    \frac{1}{30}(x-2.5) - \waterh_0, & x \geq 25\\
    -\waterh_0, & \text{otherwise}.
  \end{cases}
\]
\begin{table}[h!]
\centering
\begin{tabular}{||c |c| c| c| c||}
 \hline
 $\,$ & Case~1 &  Case~2 &  Case~3 &  Case~4 \\ [0.5ex]\hline
 $\alpha/\waterh_0$ & 0.096 & 0.2975 & 0.456 & 0.5343 \\ \hline
 WG1 & $\SI{7.75}{m}$ & $\SI{5.75}{m}$ & $\SI{4.25}{m}$ & $\SI{4.25}{m}$\\ \hline
 WG2 & $\SI{8.25}{m}$ & $\SI{6.25}{m}$ & $\SI{5.0}{m}$ & $\SI{5.0}{m}$\\ \hline
 WG3 & $\SI{8.75}{m}$ & $\SI{6.75}{m}$ & $\SI{5.75}{m}$ & $\SI{5.75}{m}$\\ \hline
\end{tabular}
\caption{Solitary wave shoaling experiment~\citep{guibo1994} -- configuaration values}%
\label{Tab:shoaling_table}%
\end{table}
The computational domain is set to $D = (\SI{-5}{m},\SI{35}{m})$. For
each experiment, we initialize the solitary wave at $x_0 = \SI{0}{m}$
with the profiles defined in~\eqref{solitary_wave} and the amplitudes
shown in Table~\ref{Tab:shoaling_table}. We run the computations to
the final time $T=\SI{10}{s}$ on the three difference meshes with
respective mesh-size:
$h = \{\SI{0.05}{m}, \SI{0.025}{m}, \SI{0.0125}{m}\}$ (corresponding
to $800, 1600, 3200$ $\polP_1$ elements).  The CFL number is set to
$0.1$. We set wall boundary conditions at both ends of the domain.}

\bal{In the experiments, the wave elevation was measured with three
  wave gauges (WGs) which were moved for each case. We report the
  location of the wave gauges in Table~\ref{Tab:shoaling_table}.  In
  Figure~\ref{Fig:shoaling}, we show the comparisons with the
  numerical computations and the experimental data for each case. We
  observe that the numerical results match the experimental data
  reasonably well. This set of experiments reinforces the observations
  made in \S\ref{Sec:1D_solitary_over_triangle} and
  \S\ref{Sec:1D_solitary_over_step} that the dispersive Serre model
  with topography effects captures well the shoaling phenomenon
  induced by topography.}
\begin{figure}[h]
\centering
    \includegraphics[trim={0 2 17 17},clip,width=0.47\linewidth]{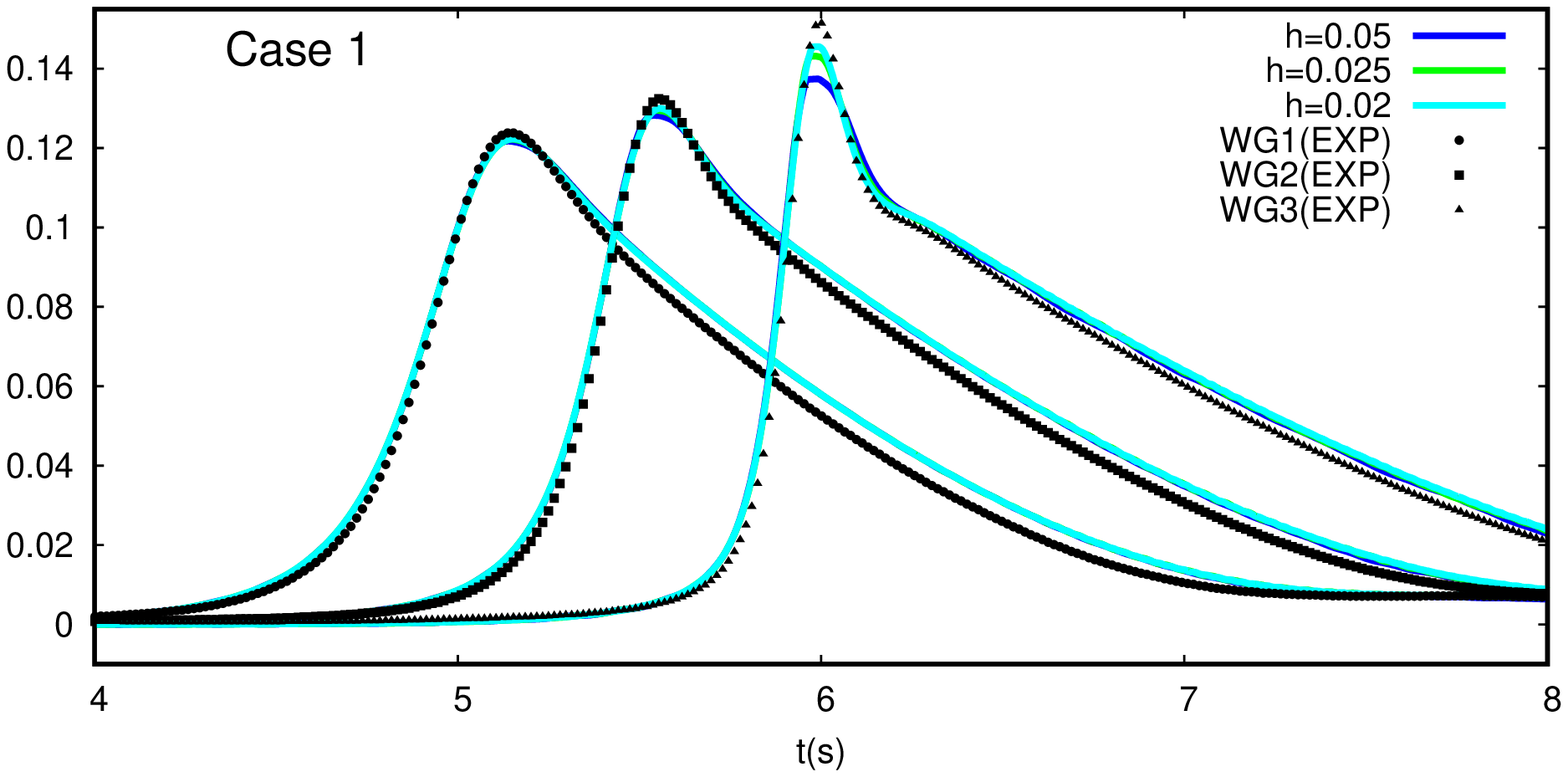}
    \includegraphics[trim={0 2 17 17},clip,width=0.47\linewidth]{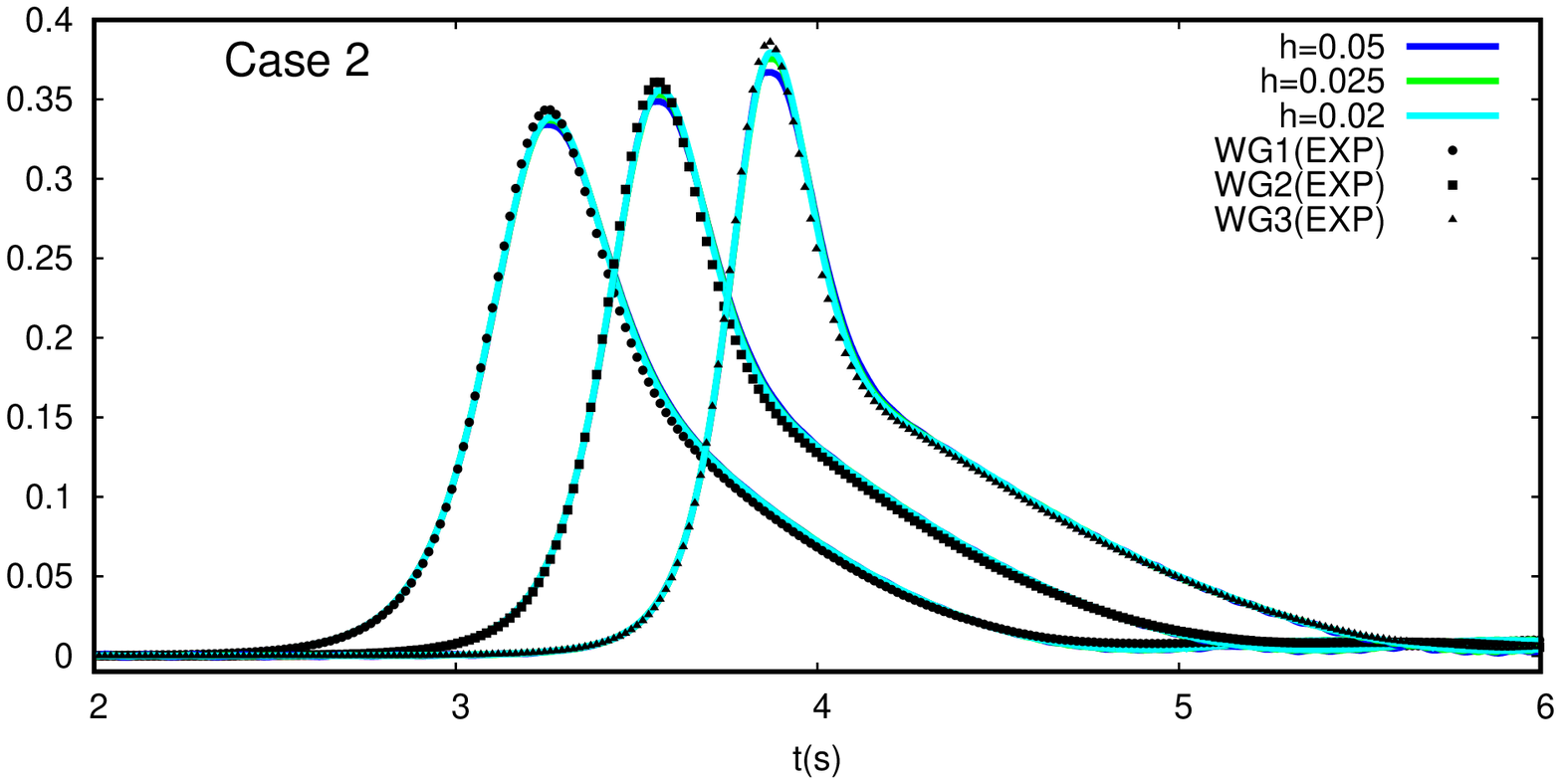}\par
    \includegraphics[trim={0 2 17 17},clip,width=0.47\linewidth]{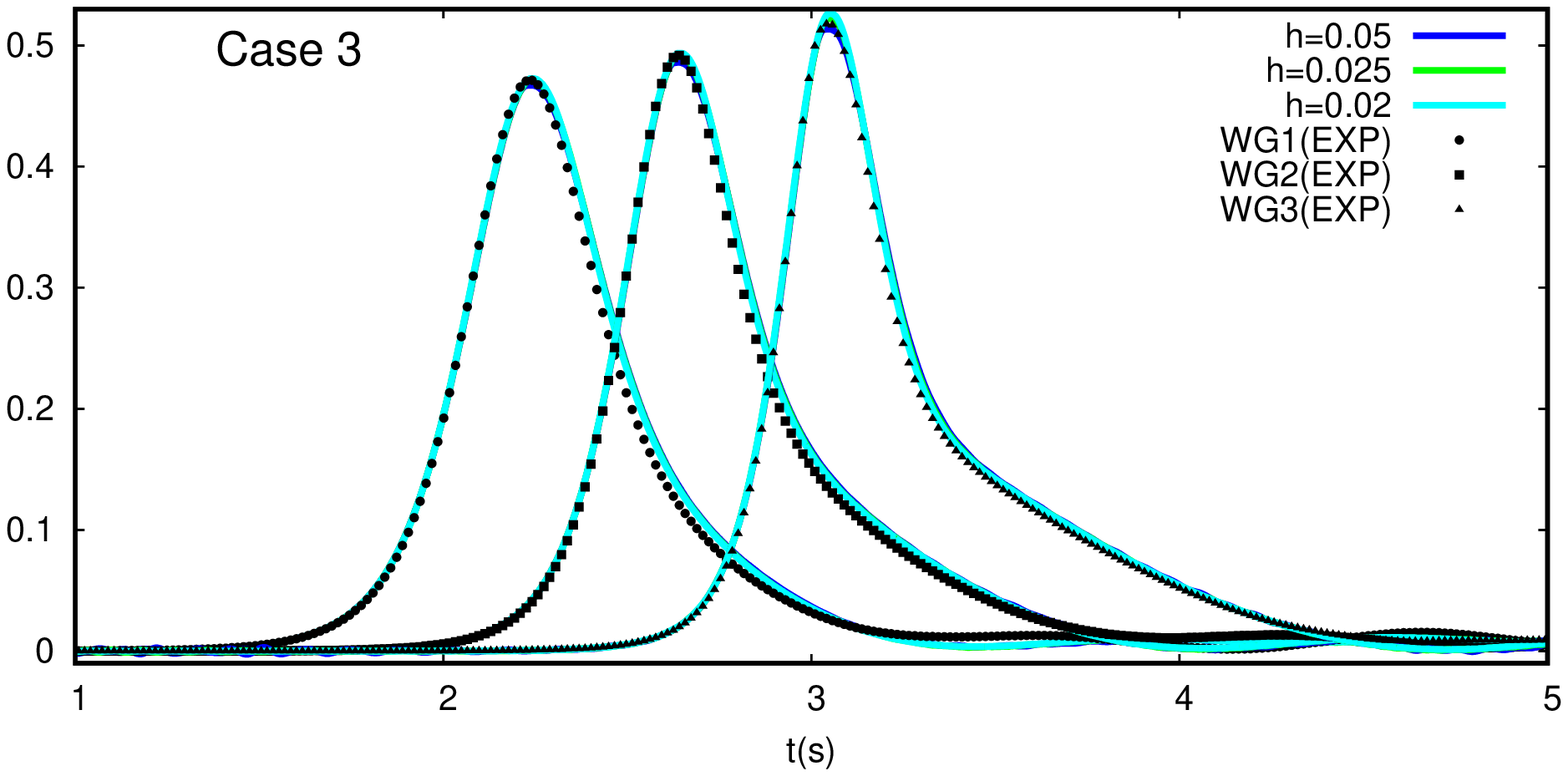}
    \includegraphics[trim={0 2 17 17},clip,width=0.47\linewidth]{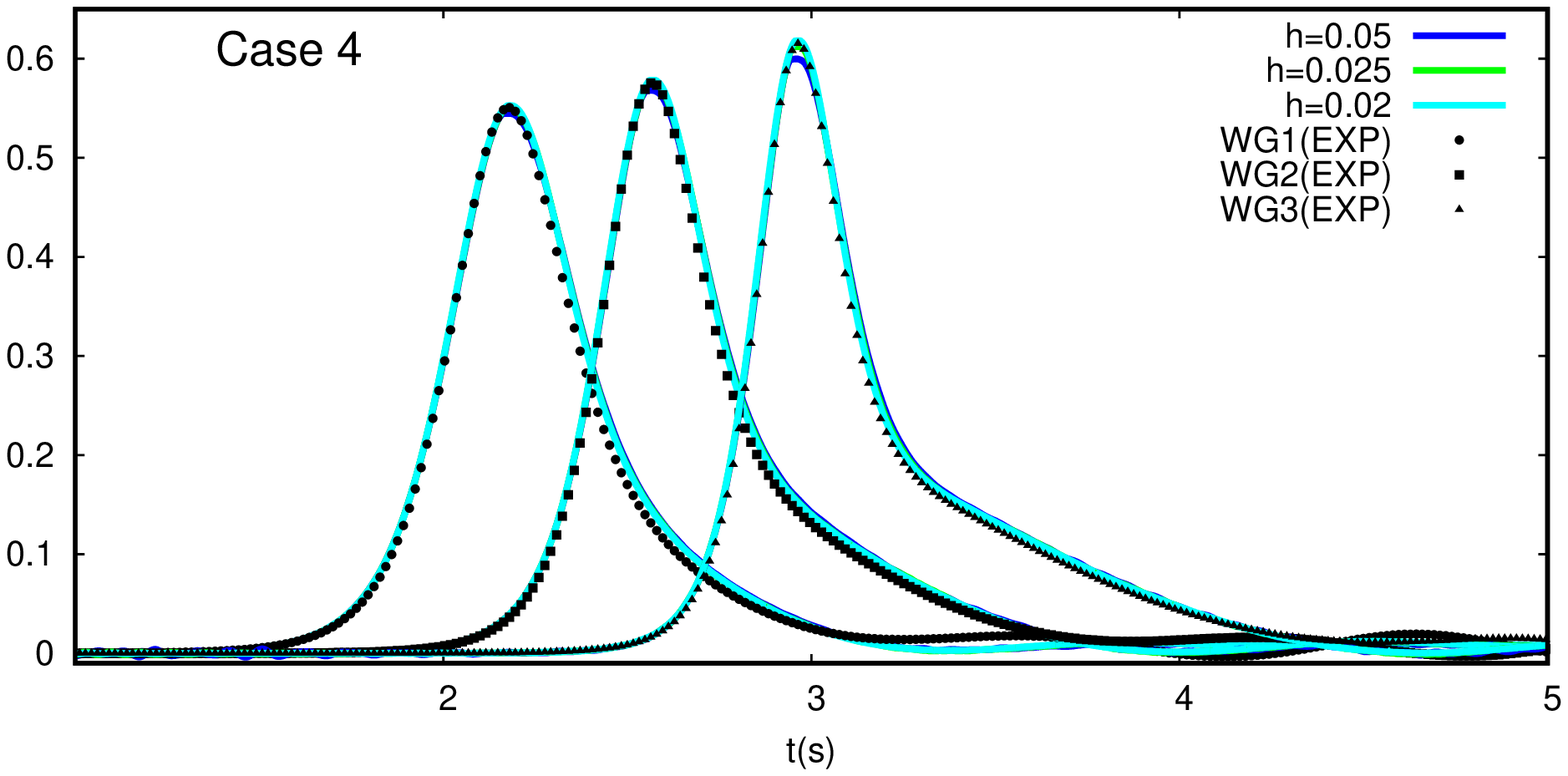}
    \caption{Comparison of numerical results with experimental data for solitary wave shoaling
    experiments of~\citep{guibo1994}.}
    \label{Fig:shoaling}%
\end{figure}
%
\subsection{1D Periodic waves propagation over a submerged bar}
\label{Sec:beji}
We now consider the 1994 experiments conducted in \cite{BEJI_1994}
which investigate the propagation of periodic waves over a submerged
trapezoidal bar.  The goal of the experiments is to model the
interaction of highly dispersive waves, and in particular, the release
of higher-harmonics into a deeper region after the shoaling process.

We consider two of the experimental setups described in
\citep{BEJI_1994}: \textup{(i)} sinusoidal long waves (SL) with target
amplitude $a = \SI{1}{cm}$ and period $T_p = \SI{2}{s}$;
\textup{(ii)} sinusoidal high-frequency waves (SH) with target
amplitude $a = \SI{1}{cm}$ and period $T_p = \SI{1.25}{s}$.  We
simulate these experiments in one spatial dimension and reproduce the
bathymetry of the submerged bar as follows:
\[
  z(x) =
  \begin{cases}
    \frac{1}{20}(x-6), & 6 \leq x \leq 12\\
    0.3, & 12 \leq x \leq 14\\
    0.3 - \frac{1}{10}(x-14), & 14 \leq x \leq 17\\
    0, & \text{otherwise}.
  \end{cases}
\]
\begin{figure}[h]
\centering
    \includegraphics[trim={20 20 10 20},clip,width=0.65\linewidth]{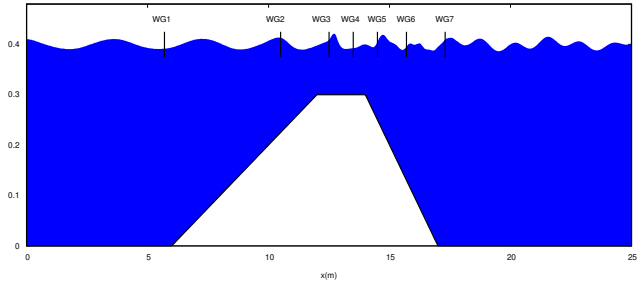}
    \caption{Submerged bar set up with gauge locations.}\label{fig:bar_setup}%
\end{figure}
The computational domain is set to be
$D=(-\SI{12.3}{m},\SI{37.7}{m})$. We impose two relaxation zones in
the domain for the generation and absorption of waves (we refer the
reader to \cite{Tovar_PhD} for the details). The length of the
generation zone for the SL case is $\SI{6}{m}$ (approximately 1.5
wavelengths) and $\SI{4}{m}$ for the SH case (approximately 2.0
wavelengths).  The absorption zone for both cases is set to
$D_{\text{abs}}=(\SI{25}{m},\SI{37.7}{m})$.  We set the reference
water depth to $\sfH_0 = \SI{0.4}{m}$ and initialize the water height
profile with $\waterh_0(x) = \sfH_0 - z(\bx)$ and discharge
$q_0(x)=0$. The periodic waves are introduced into the domain via the
generation zone with the profiles given by:
\begin{equation*}
  \waterh(x, t) = \waterh_0 + a \sin(k x - \sigma t),\quad u(x, t) = \frac{a}{\waterh_0}\frac{\sigma}{k}\sin(k x - \sigma t),
\end{equation*}
where $a$ is the amplitude, $k$ the wave number and $\sigma$ the wave
frequency.  Here, we define the wave frequency by
$\sigma=\tfrac{2\pi}{T_p}$ and $k$ is found by using the dispersion
relation for the full Serre model:
$k^2 = 3 \sigma^2/ (3g\waterh_0 - \waterh_0^2 \sigma^2)$. We set the final time to be
$t=\SI{60}{s}$ and run with CFL=0.175. We run the computations on
three different meshes with mesh-size
$h = \{\SI{0.05}{m},\SI{0.025}{m},\SI{0.0125}{m}\}$ (corresponding
to $1000$, $2000$, and $4000$ $\polP_1$ cells.).

\begin{figure}[h]
\centering
    \includegraphics[trim={0 20 5 5},clip,width=0.35\linewidth]{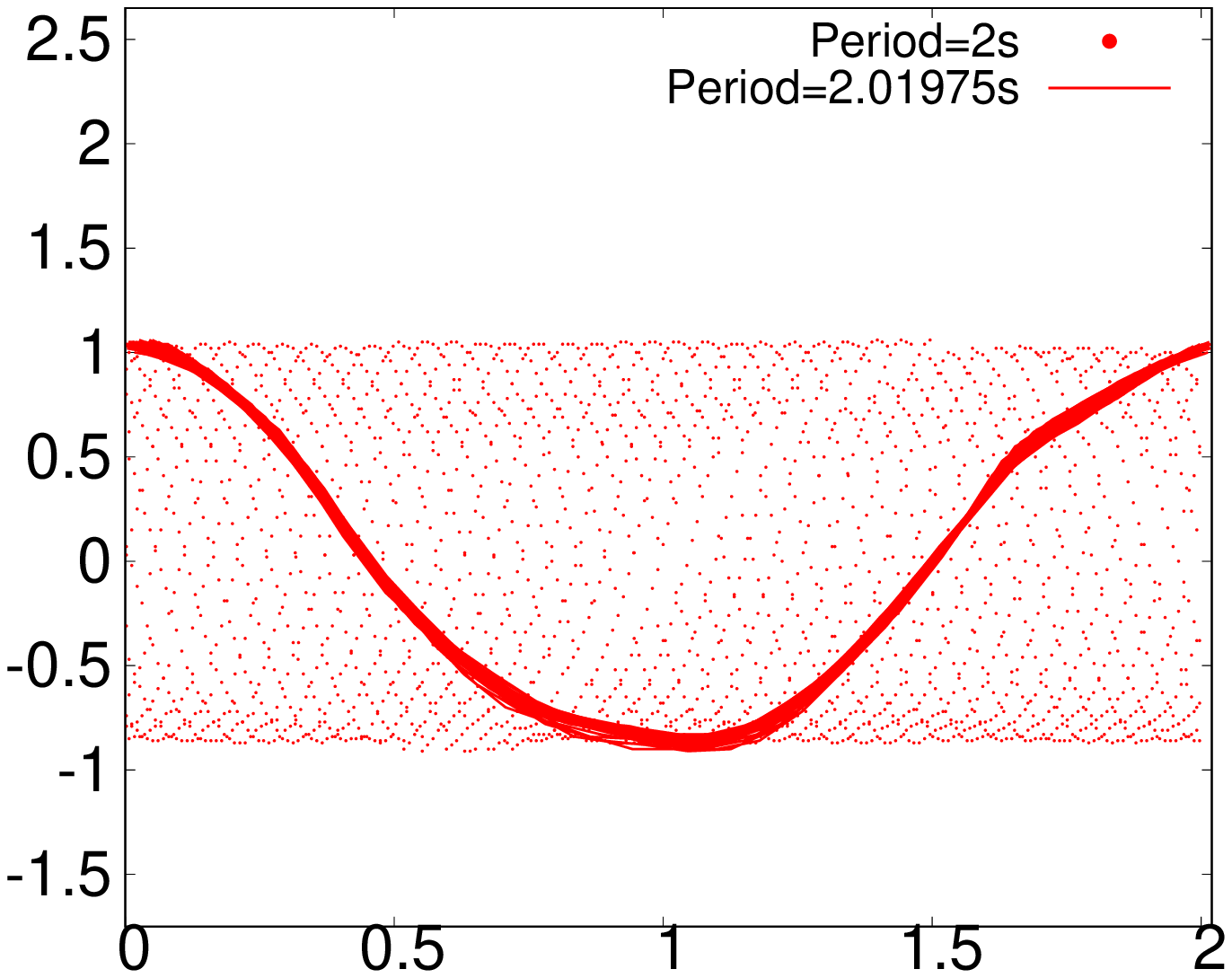}
    \includegraphics[trim={0 20 5 5},clip,width=0.35\linewidth]{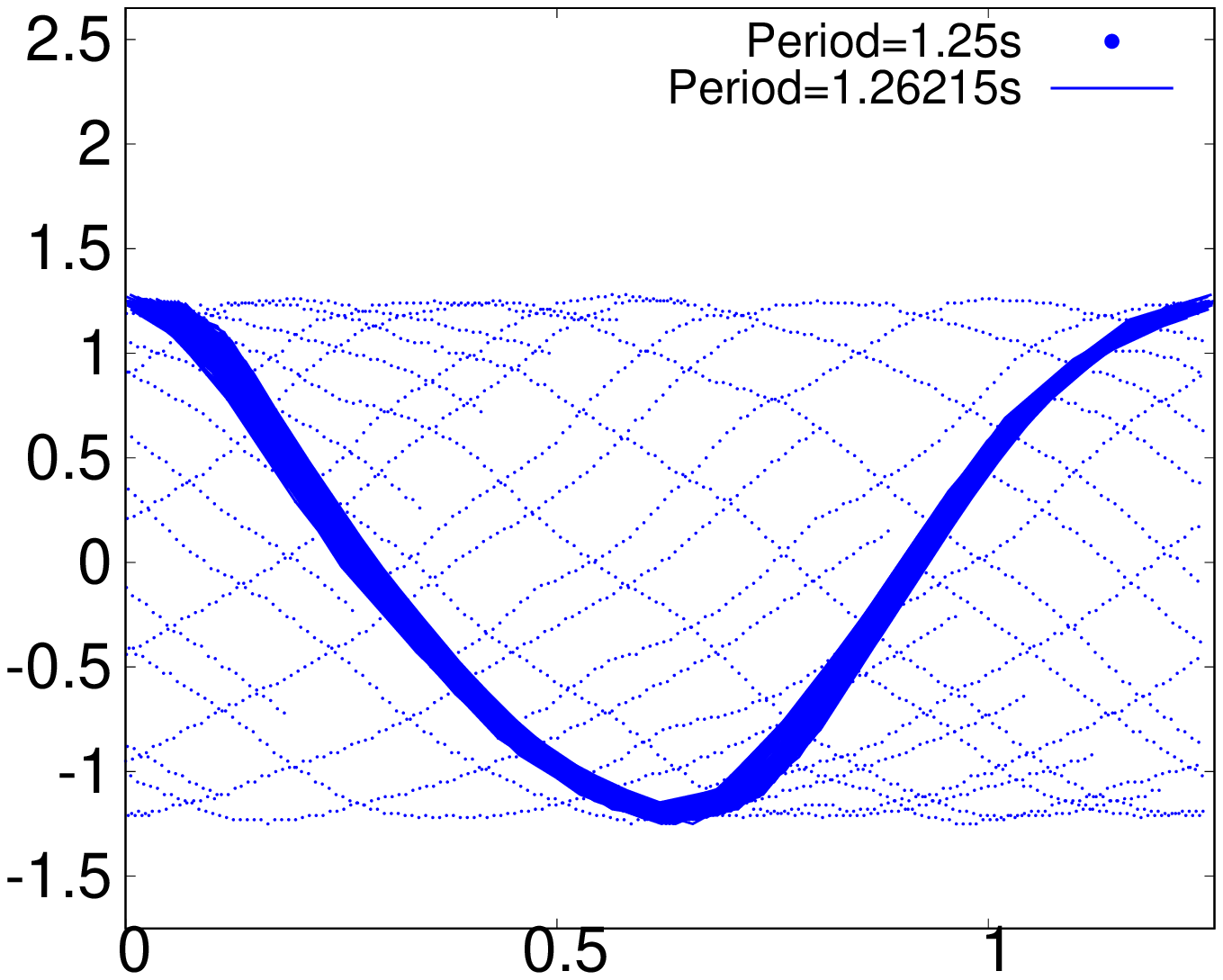}
    \caption{Illustration of \textit{period-folding} with experimental wave gauge 1 with SL case (left) and SH case (right).}
    \label{fig:period_folding}%
  \end{figure}
In the original experiments, seven wave gauges (WGs) were used to
measure the water elevation: WG1($x = \SI{5.7}{m}$),
WG2($x = \SI{10.5}{m}$), WG3($x = \SI{12.5}{m}$),
WG4($x = \SI{13.5}{m}$), WG5($x = \SI{14.5}{m}$),
WG6($x = \SI{15.7}{m}$), WG7($x = \SI{17.3}{m}$). In Figure
\ref{fig:bar_setup}, we show the locations of these wave gauges with
respect to the bathymetry. The experimental data used here was
obtained from the original author of the experiments, Serdar Beji.  It
was our experience that the experimental data did not quite match the
targeted values of the period mentioned above.  To illustrate this, we
introduce a post-processing technique of the experimental data that we
call \textit{period-folding}.  The idea is that given some
experimental time series that is supposedly periodic with period $T_p$
in the time interval $[t_0,T_{\text{final}}]$, the folding of the
sequence obtained by the mapping
$t\mapsto t-t_0 - \lfloor\frac{t-t_0}{T_p}\rfloor T_p$ should
represent the evolution of the signal during one period (here
$\lfloor\SCAL\rfloor$ is the floor function).  Doing this folding
gives a better idea of the long time behavior of the experimental data
than just looking at one specific window of length $T_p$ as often done
in the literature. In particular it reveals whether the signal is
indeed periodic with period $T_p$. In Figure~\ref{fig:period_folding},
we use period folding for the experimental data at WG1 with the
targeted values of $T_p$.  This process shows that the experimental
data have not exactly the alleged period. We have been able to
discover a good approximation of the actual period $T_p^{\text{adj}}$
by doing the period folding with various values of $T_p$: \textup{(i)}
SL case, $T_p=\SI{2}{s}$, $T_p^{\text{adj}} = \SI{2.01975}{s}$;
\textup{(ii)} SH case, $T_p=\SI{1.25}{s}$,
$T_p^{\text{adj}} = \SI{1.26215}{s}$. We also note that the wave
amplitude value for the SH case is closer to $\SI{0.014}{m}$ and this
is what we use for our computations.
\begin{figure}[h]
\centering
\renewcommand*{\arraystretch}{0}
\begin{tabular}{*{3}{@{}c}@{}}
  \includegraphics[trim={0 25 10 5},clip,width=0.33\linewidth]{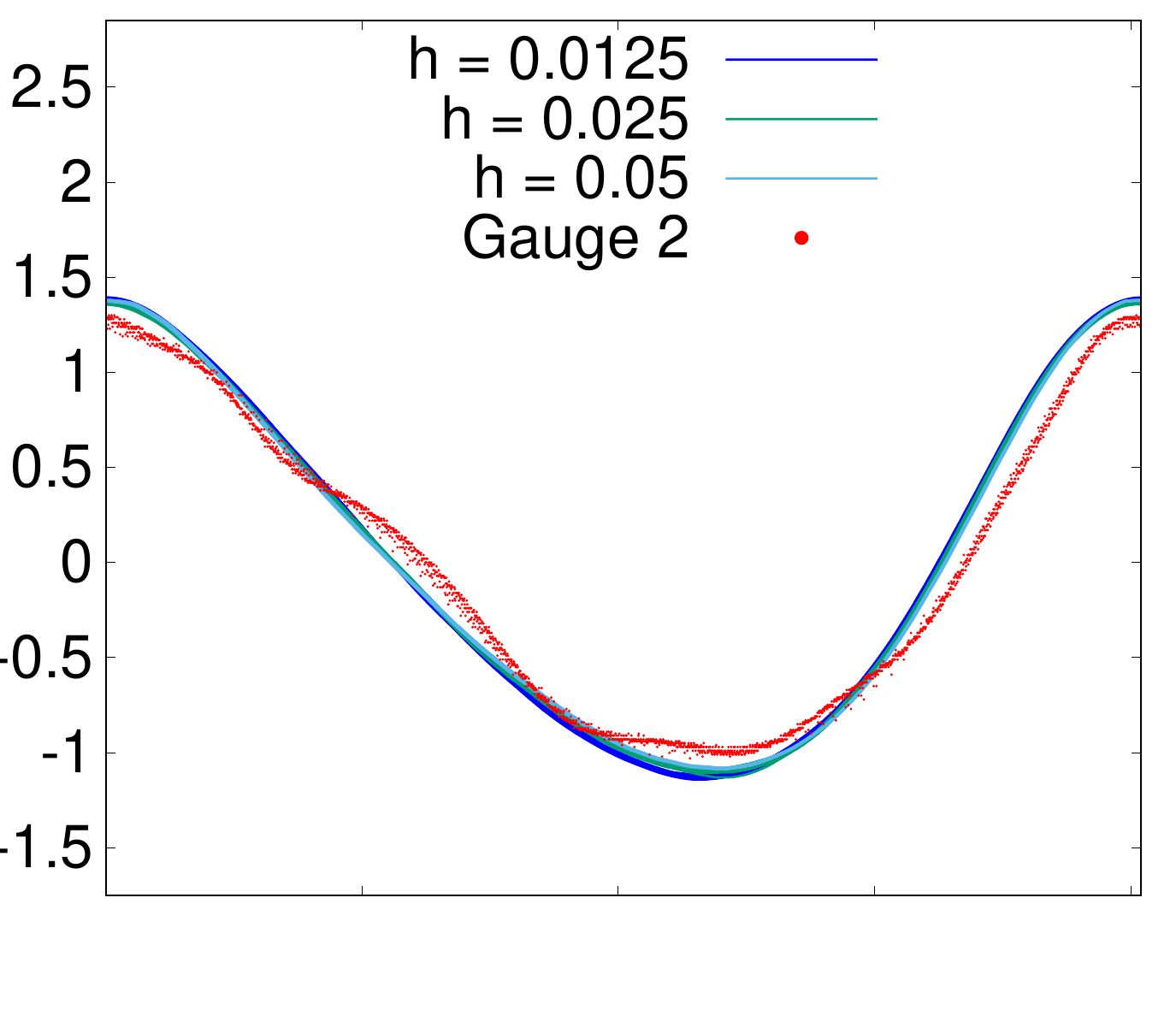} &
  \includegraphics[trim={0 25 10 5},clip,width=0.33\linewidth]{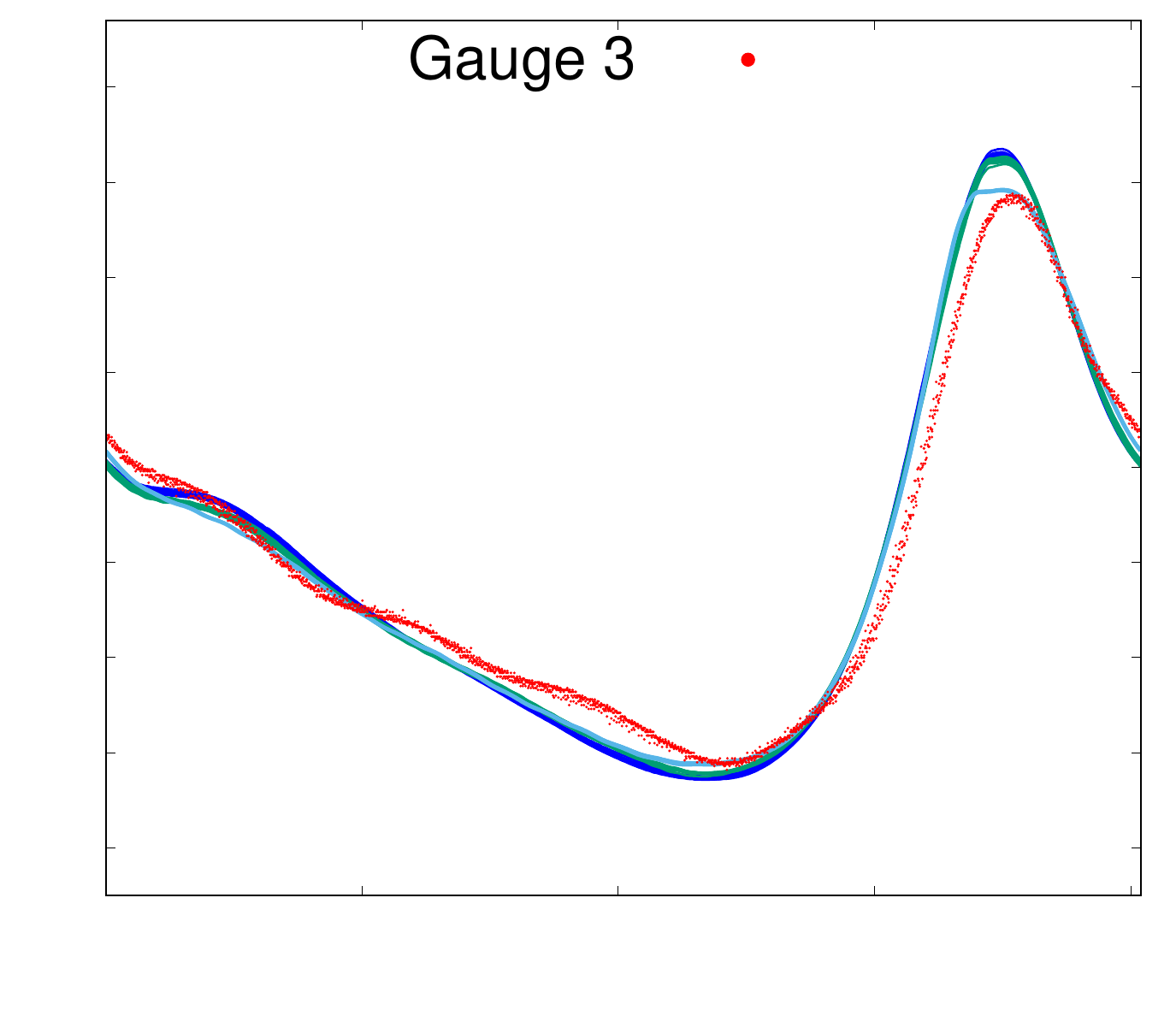} &
  \includegraphics[trim={0 25 10 5},clip,width=0.33\linewidth]{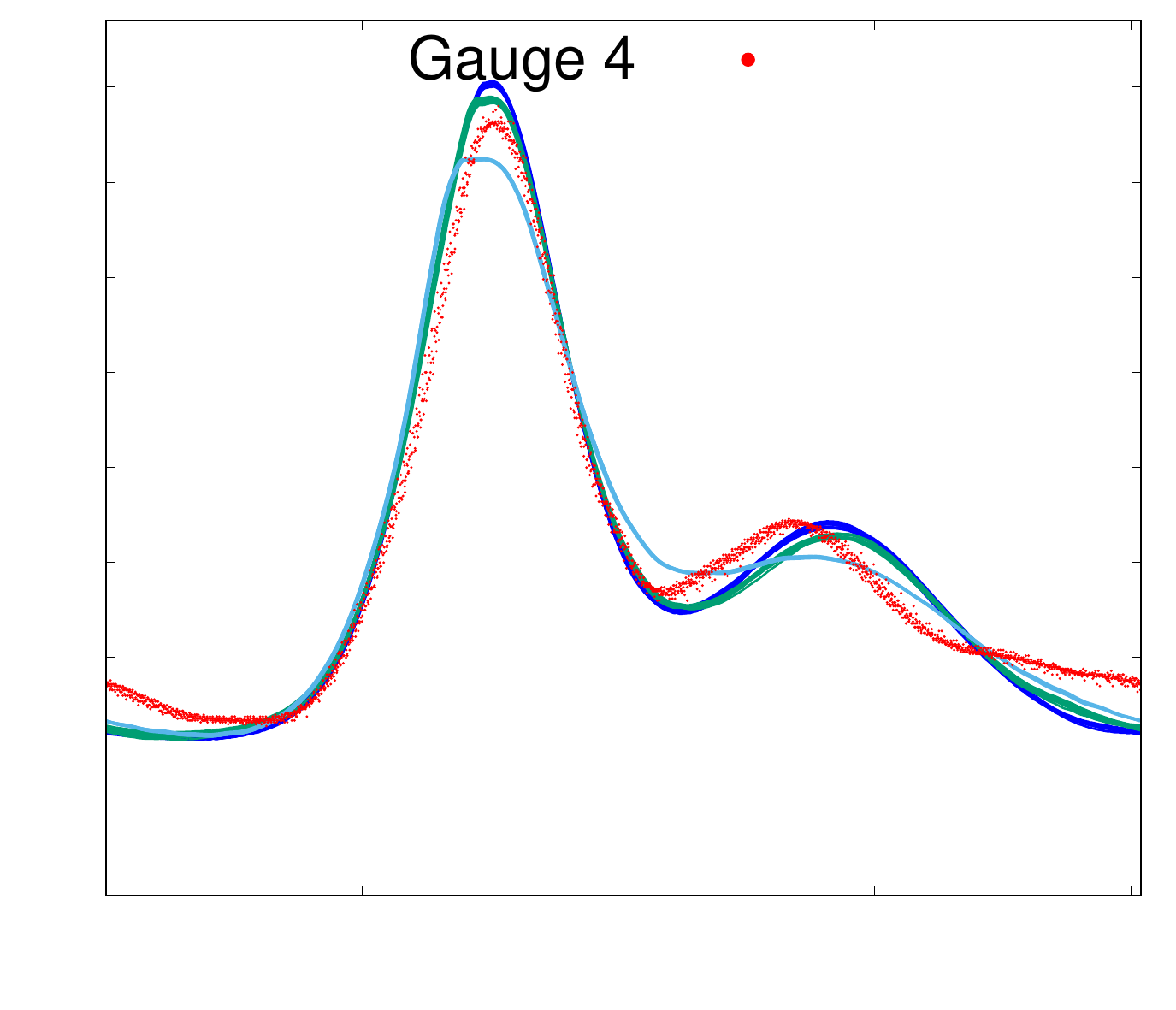} \\
  \includegraphics[trim={5 0 10 5},clip,width=0.33\linewidth]{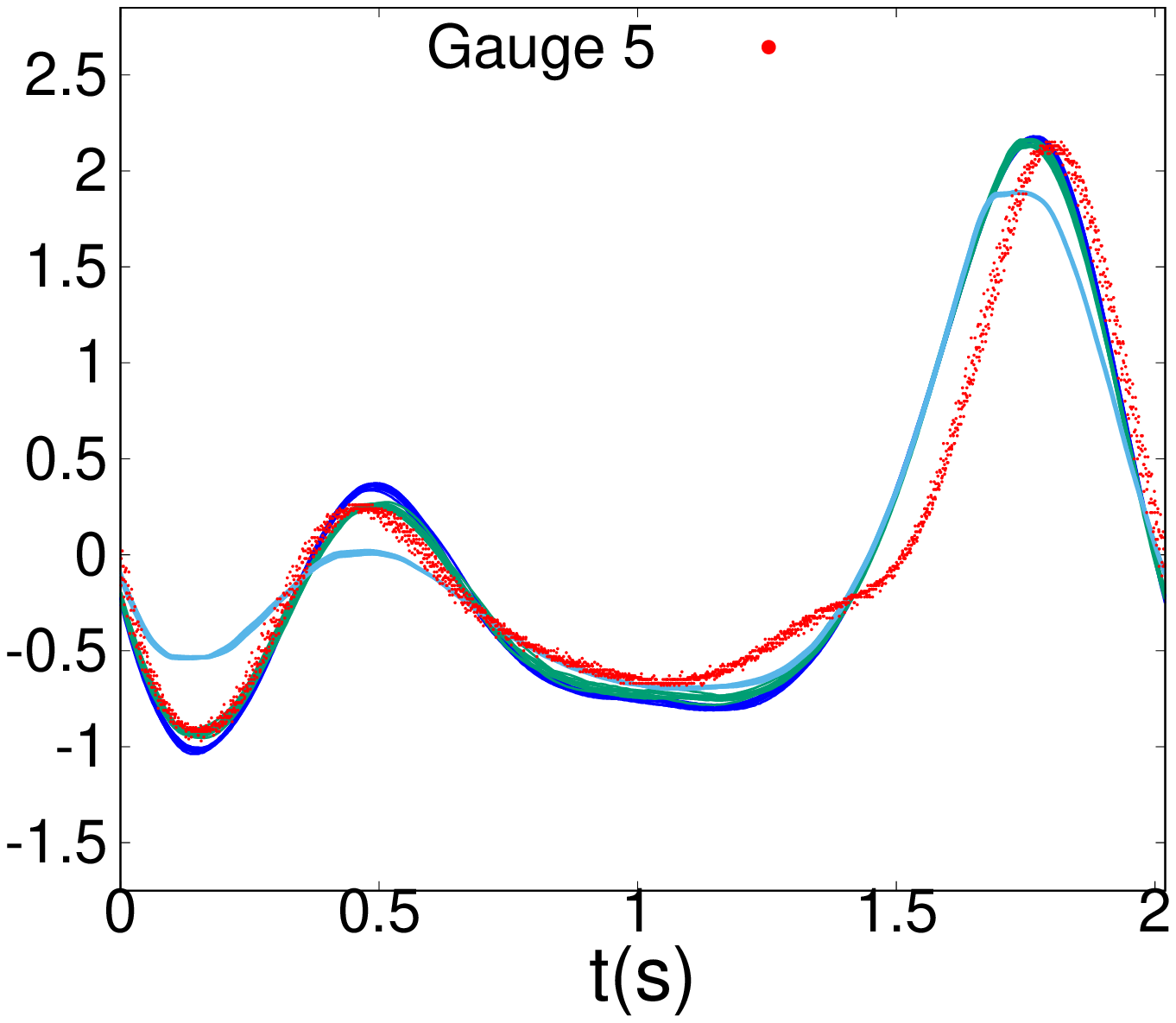} &
  \includegraphics[trim={5 0 10 5},clip,width=0.33\linewidth]{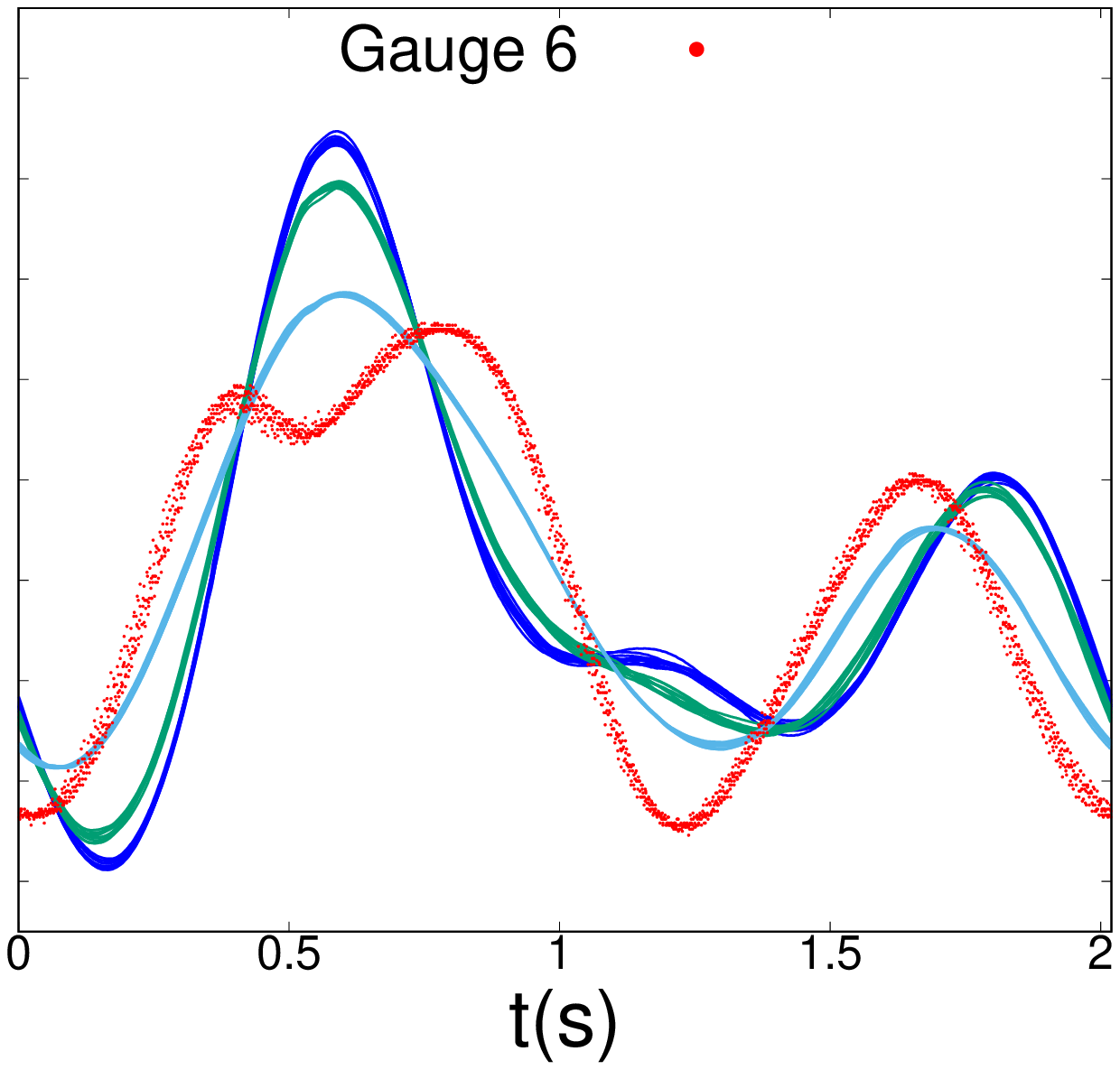} &
  \includegraphics[trim={5 0 10 5},clip,width=0.33\linewidth]{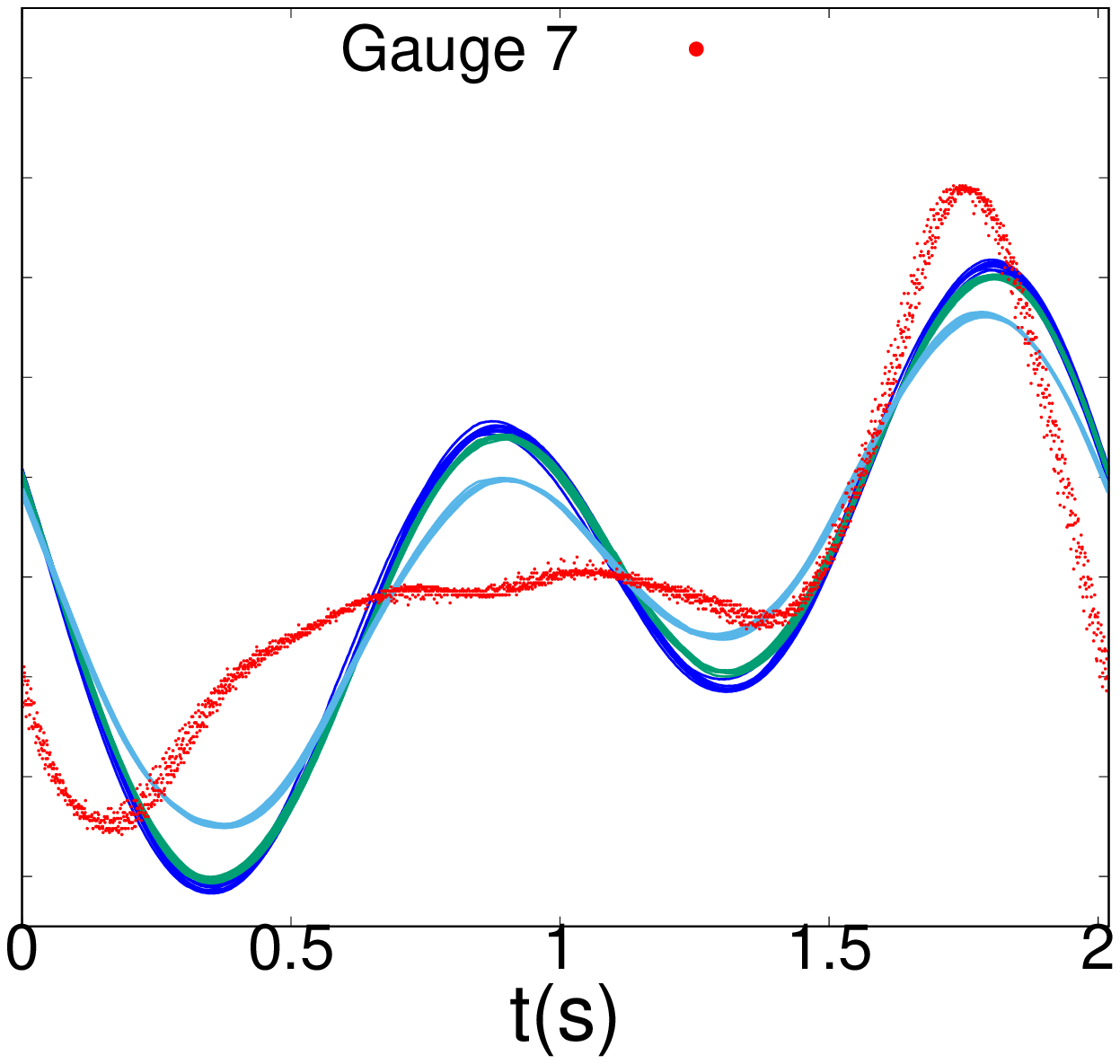}
    \end{tabular}
    \caption{SL Case. Water elevation at seven gauges.
      Numerical results using three meshes, $h = \{\SI{0.05}{m},\SI{0.025}{m},\SI{0.0125}{m}\}$ (solid lines).
      Experimental data (red points).}\label{fig:sl_bar}%
\end{figure}
\begin{figure}[h]
\centering
\renewcommand*{\arraystretch}{0}
\begin{tabular}{*{3}{@{}c}@{}}
    \includegraphics[trim={0 25 10 0},clip,width=0.33\linewidth]{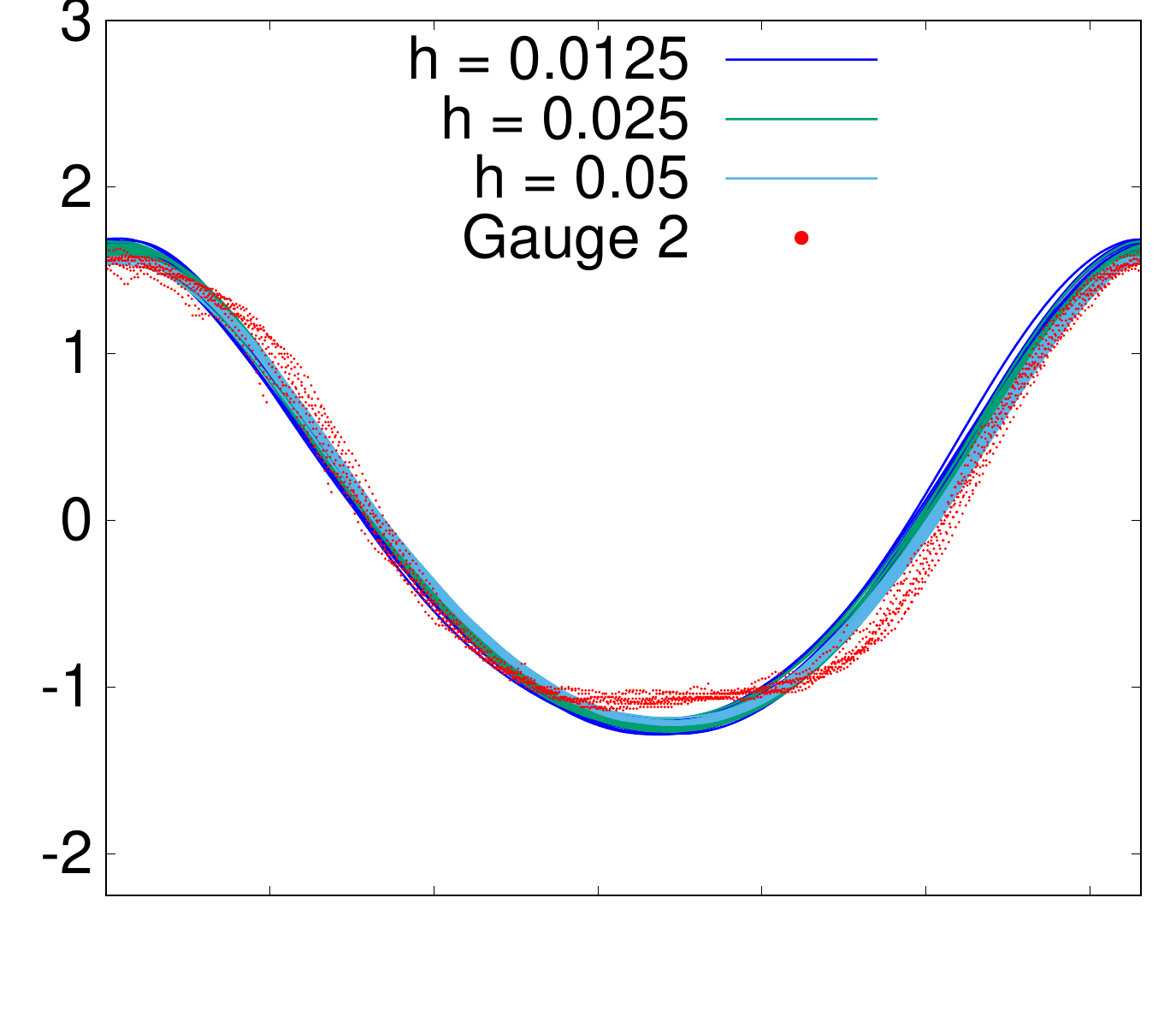} &
    \includegraphics[trim={0 25 10 0},clip,width=0.33\linewidth]{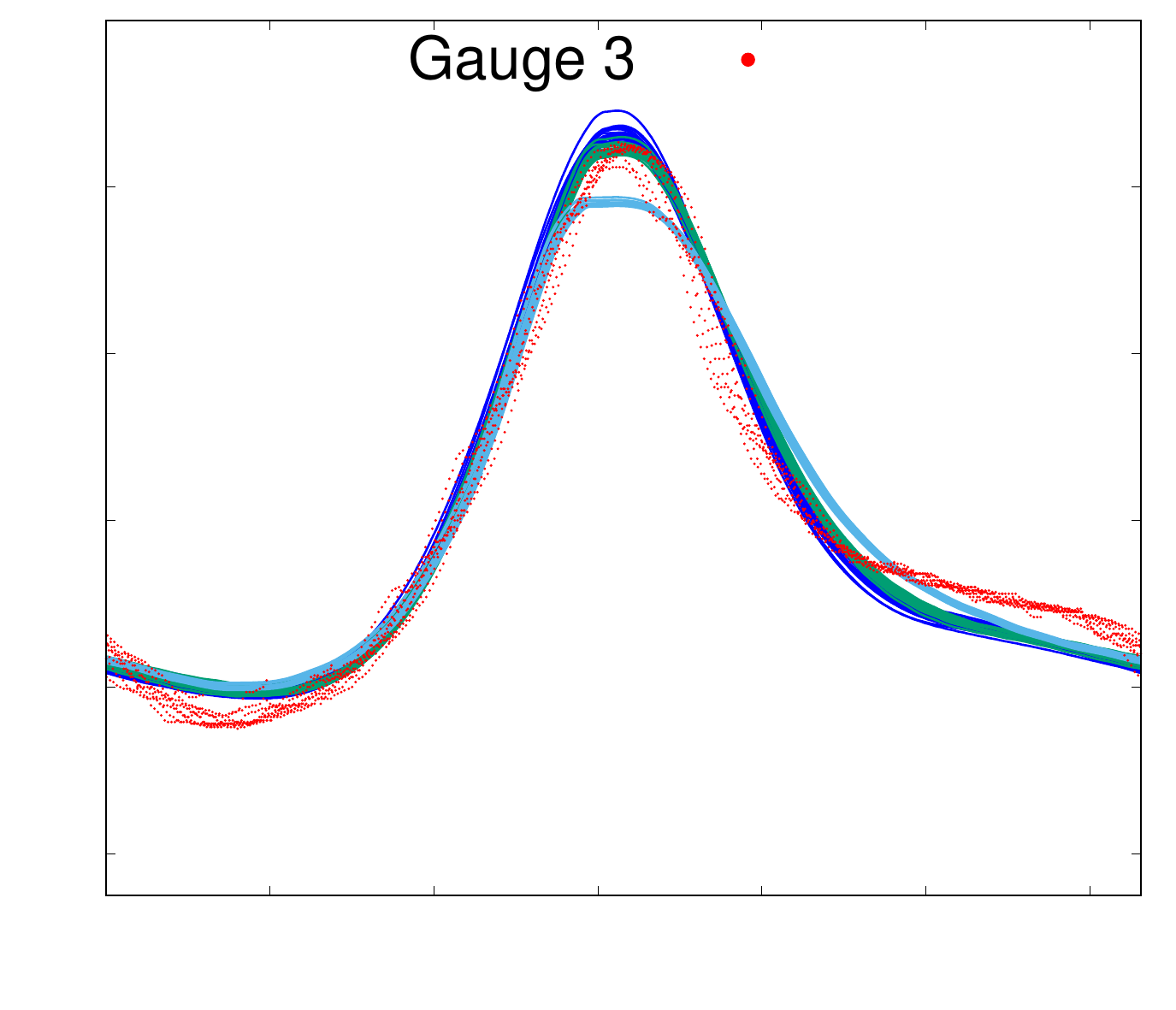} &
    \includegraphics[trim={0 25 10 0},clip,width=0.33\linewidth]{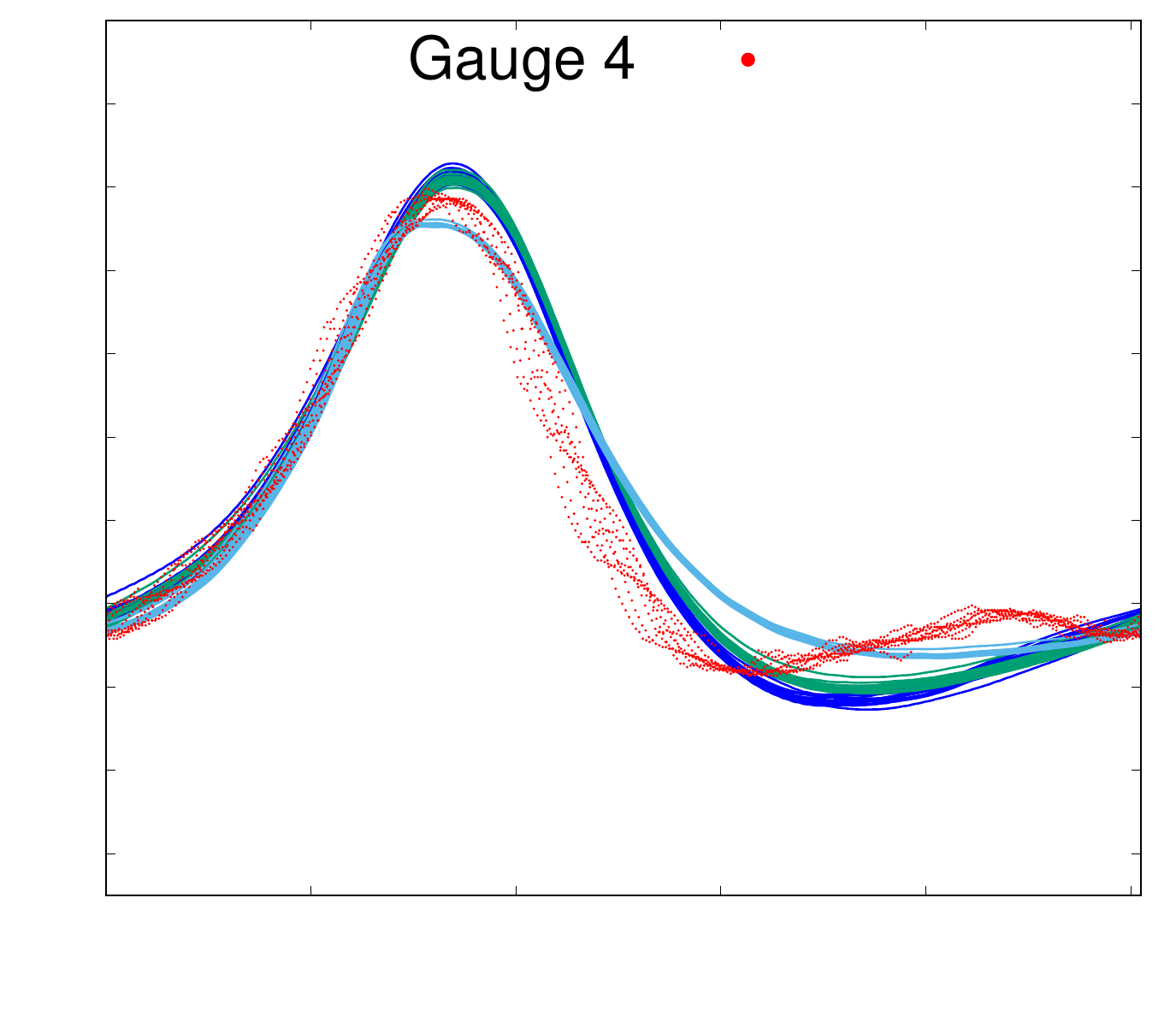} \\
    \includegraphics[trim={0 0 10 0},clip,width=0.33\linewidth]{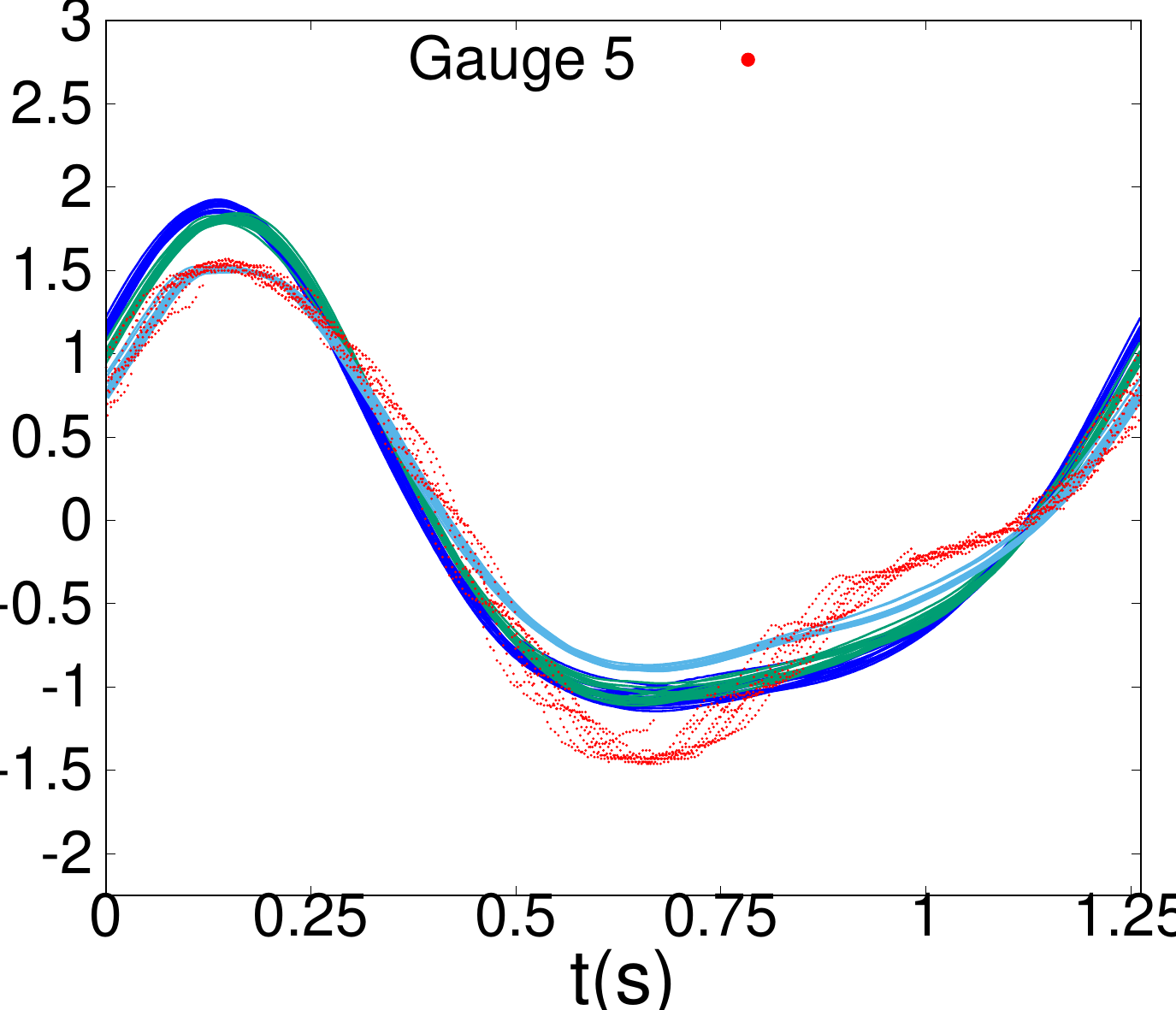} &
    \includegraphics[trim={0 0 10 0},clip,width=0.33\linewidth]{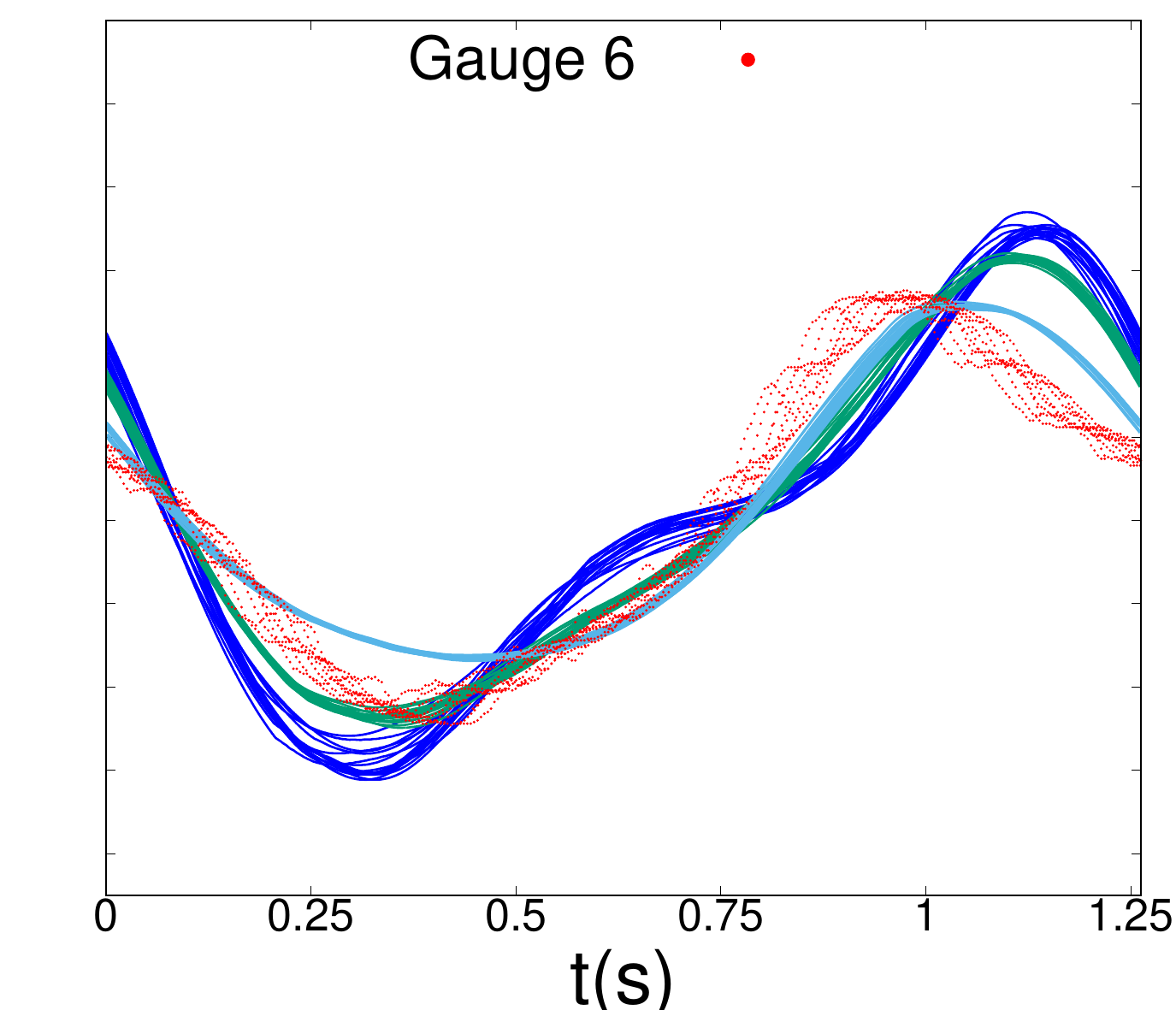} &
    \includegraphics[trim={0 0 10 0},clip,width=0.33\linewidth]{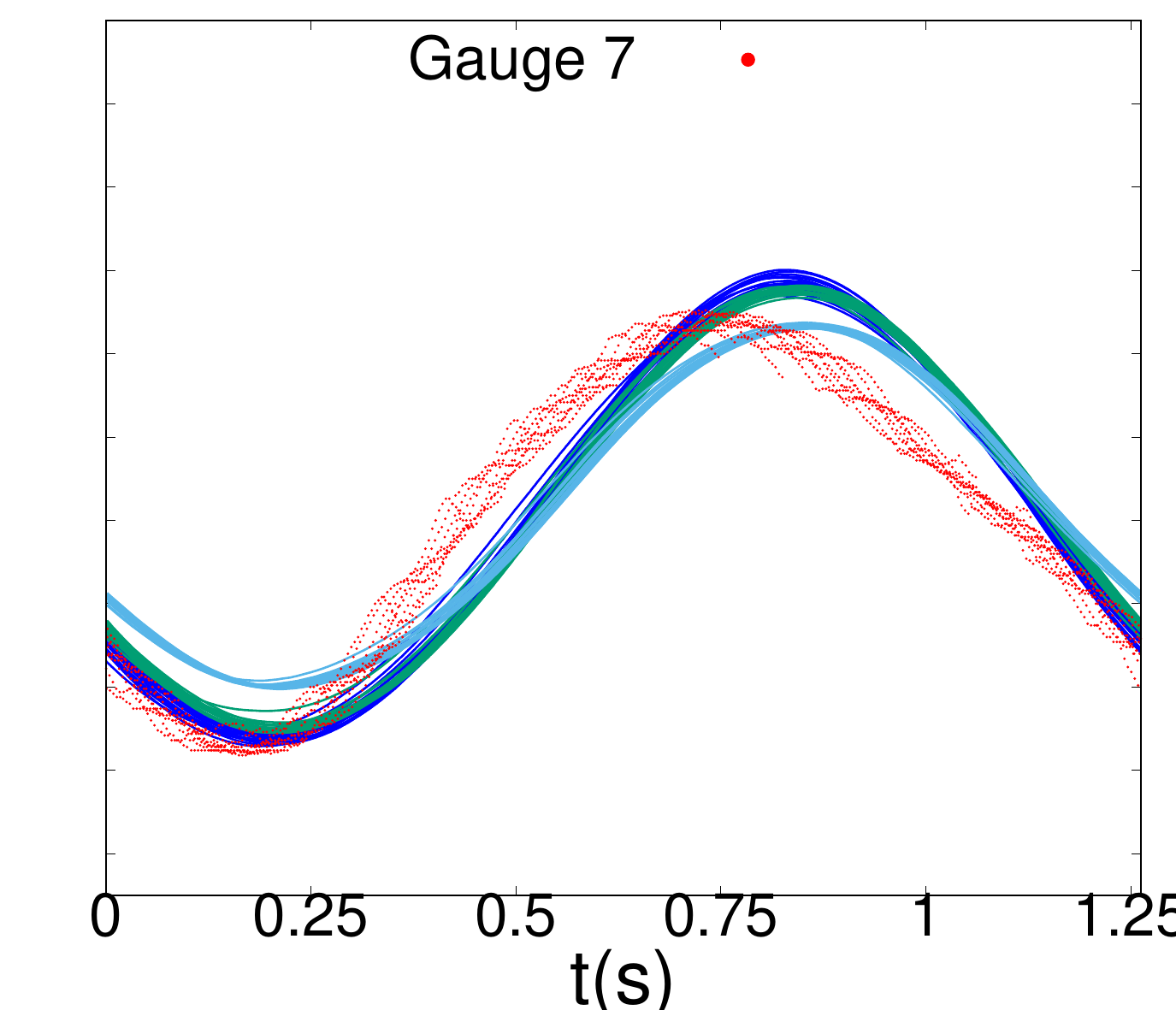}
    \end{tabular}
    \caption{SH Case. Water elevation at seven gauges. Numerical results using three meshes, $h = \{\SI{0.05}{m},\SI{0.025}{m},\SI{0.0125}{m}\}$ (solid lines). experimental data (red points).}\label{fig:sh_bar}%
\end{figure}

Using the adjusted values above and the period-folding technique, we
compare in Figures \ref{fig:sl_bar} and \ref{fig:sh_bar} the
experimental data and the results of the computations at the wave
gauges 2--7 (Figure~\ref{fig:sl_bar} for the SL experiment and and
Figure~\ref{fig:sh_bar} for the SH experiment). For the experimental
data, we choose $t_0$ to be the time corresponding to the second
maximum wave height of the signal at WG2.  For the numerical
simulations, we choose $t_0$ to be the time corresponding to the
maximum wave height around $t\approx\SI{40}{s}$ at WG2.
We observe that the numerical results
converge as the mesh is refined.
The SH
experiment is relatively well reproduced at all the gauges. There are slight deviations
at the last two gauges behind the bar for the SL experiment.
It is possible that some wave breaking occurs between gauges 5 and 6 in this case.
We note here that the results
shown above are very similar to those seen in \cite{dumbser_2020} in
Figure 9 and Figure 10 therein.

\subsection{2D Solitary wave run-up over a conical island}
\label{Sec:conical_island}
We consider the 1995 laboratory experiments conducted by
\cite{Briggs1995} at the US Army Waterways Experiment Station in
Vicksburg, Mississippi (now the US Army Engineer Rsearch and Development Center). The laboratory experiments were motivated by
several tsunami events in the 1990s where large unexpected run-up
heights were observed on the back (or lee) side of small
islands. Several authors have used this experiment to study the run-up
phenomena using the classical Shallow Water model and other dispersive
models (see: \cite{HOU2013}, \cite{LANNES2015},
\cite{KAZOLEA_2012}).

Let $r(\bx)$ by the radius from the center of the island located at
$(\SI{12.96}{m},\SI{13.80}{m})$. Then the
conical island bathymetry is defined by
\begin{equation}
  z(\bx) =
  \begin{cases}
    \min\left(\waterh_{\text{top}}, \waterh_{\text{cone}}-r(\bx)/s_{\text{cone}}\right), & r(\bx) < r_{\text{cone}}\\
    0, & \text{otherwise}
\end{cases}
\label{eq:conical_island},
\end{equation}
where
$\waterh_{\text{top}}=0.625\si{m},\waterh_{\text{cone}}=0.9\si{m}$ and
$r_{\text{cone}}=3.6\si{m}$. We
reproduce two experiments, which we call Case B and Case C, with
$\alpha/\waterh_0 = 0.091$ and $\alpha/\waterh_0 = 0.181$ where
$\waterh_0 = \SI{0.32}{m}$ and $\alpha$  is the amplitude of the solitary wave.

The computations are done in
the domain $(0,\SI{25}{m})\times(0,\SI{30}{m})$ until the final time
$T=\SI{12}{s}$ with wall boundary conditions and CFL number
0.25. We initiate the solitary wave at
$x_0 = 9.36 - \frac{L}{2}$ using \eqref{eq:initial_solitary} with
$L = \frac{2\waterh_0}{k}\arccosh\sqrt{20}$ and $k = \sqrt{\frac{3\alpha}{4\waterh_0}}$.
Here $x_0$ is the
location of the experimental wave gauge 3 (WG3) which was used to measure
the free surface elevation away from the island. In Figure \ref{fig:island_figure},
we show the surface plots of the free surface elevation $\waterh + z$ on a
mesh composed of 52,129 $\bpolP_1$ nodes at $t=\{0,5.8,\SI{8}{s}\}$.
 \begin{figure}[h]
 \centering
   \includegraphics[width= 0.32\linewidth]{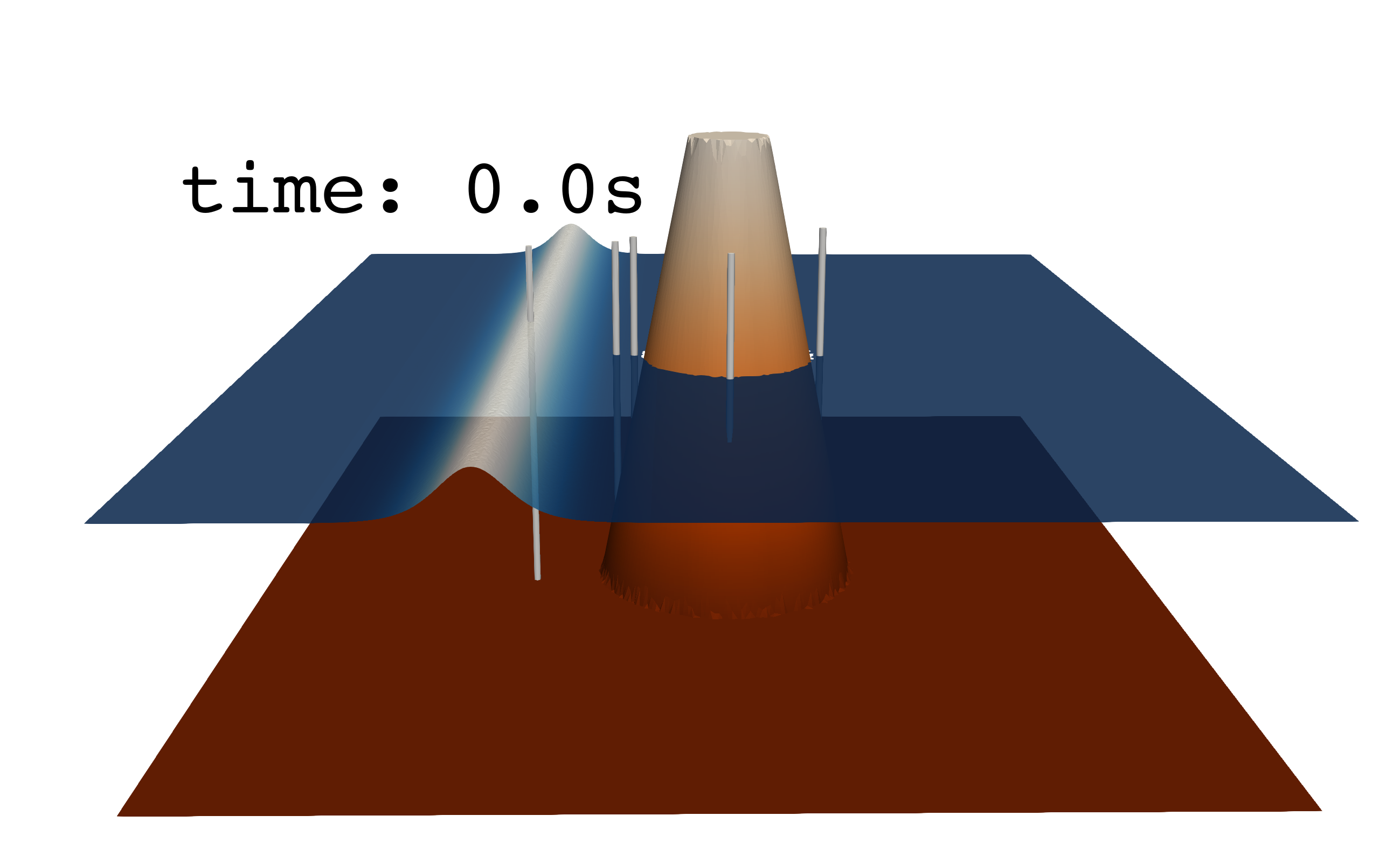}
   \includegraphics[width= 0.32\linewidth]{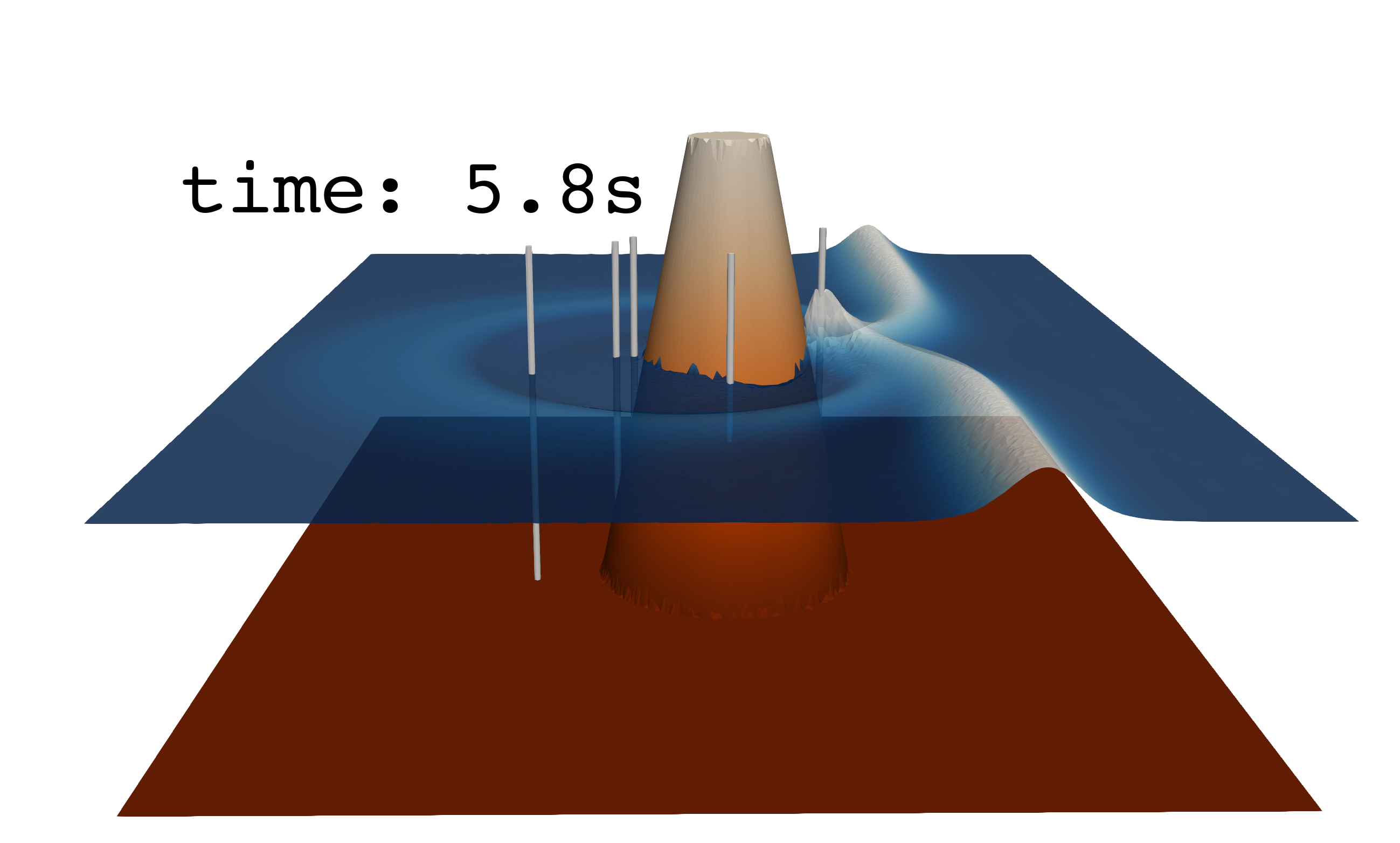}
   \includegraphics[width= 0.32\linewidth]{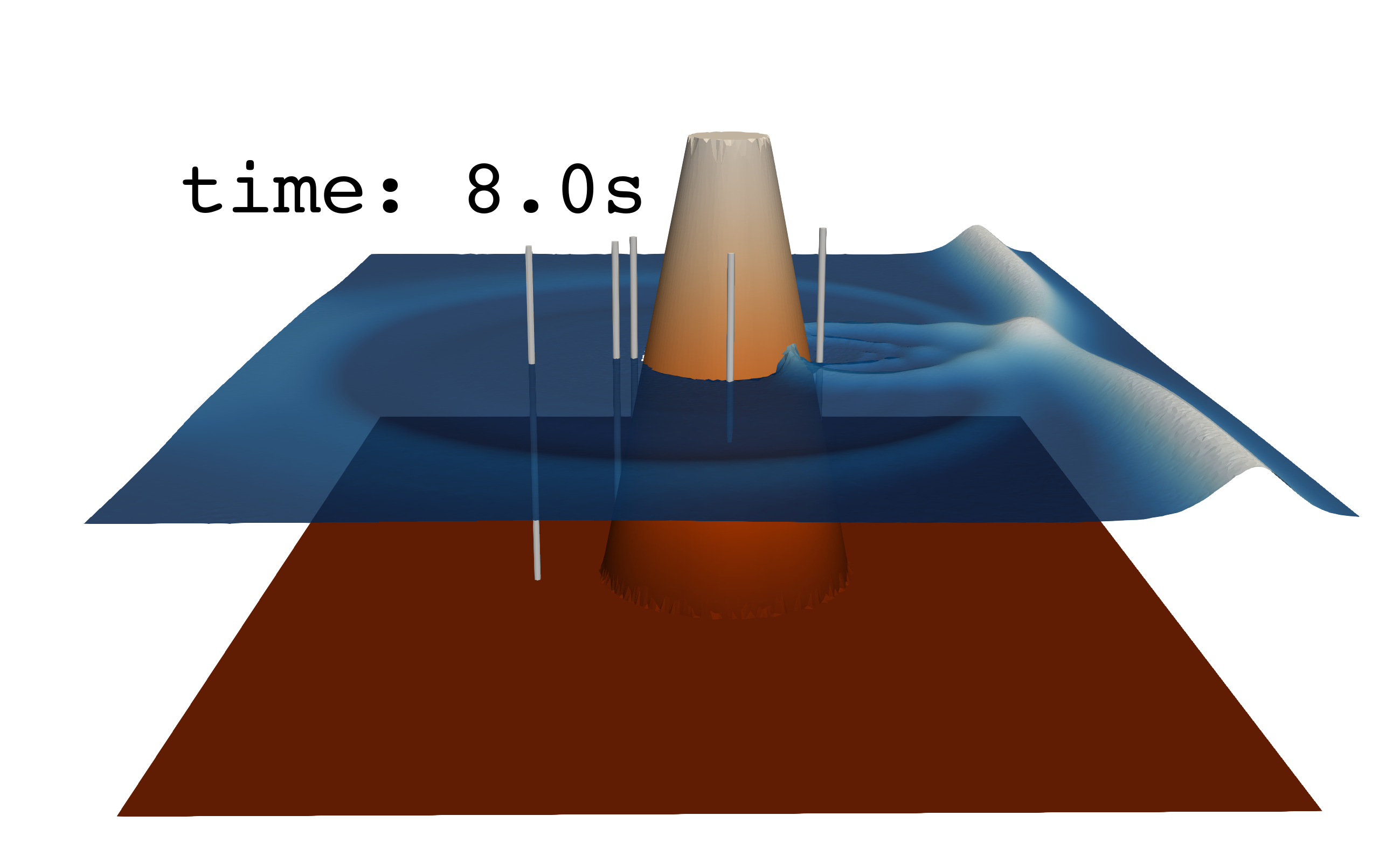}
   \caption{Experiment 4 -- Surface plot of the water elevation
     $\waterh + z$ at several times for Case C. The thin grey
     cylinders represent the wave gauges WG3, WG6, WG9, WG16, WG22 (left to right).}
 \label{fig:island_figure}
 \end{figure}

 In the experiment, several wave gauges were placed around the island
 to measure the free surface elevation and wave run-up. We compare the
 numerical results with the measurents at four of the experimental
 wave gauges: WG6$(\SI{9.36}{m},\SI{13.80}{m})$,
 WG9$(\SI{10.36}{m},\SI{13.80}{m})$,
 WG16$(\SI{12.96}{m},\SI{11.22}{m})$,
 WG22$(\SI{15.56}{m},\SI{13.80}{m})$. In Figure
 \ref{fig:island_gauges}, we show the comparison with the experimental
 data and numerical simulations for both Case B (on the left) and Case
 C (on the right). For both cases, the numerical results show good
 agreement with the experimental data. We capture well the magnitudes
 of the run-up and draw-down at the front side of the island at WG9
 with a slight overshoot in Case C. For both cases, we see very good
 comparison with WG16 which corresponds to the run-up and draw-down on
 the side of the island. We note that the experimental data shows
 subsequent free surface oscillations after impacting the island which
 is most notable in WG9, but our numerical simulation do not capture
 this effect. This phenomena is consistent with the literature and has
 been observed by others (see: \cite{LANNES2015, KAZOLEA_2012,
   Yamazaki_2009}), and is likely due to inconsistency in the original
 experiments.

\begin{figure}[h]
\centering
    \includegraphics[trim={0 100 0 0},clip,width=0.45\linewidth]{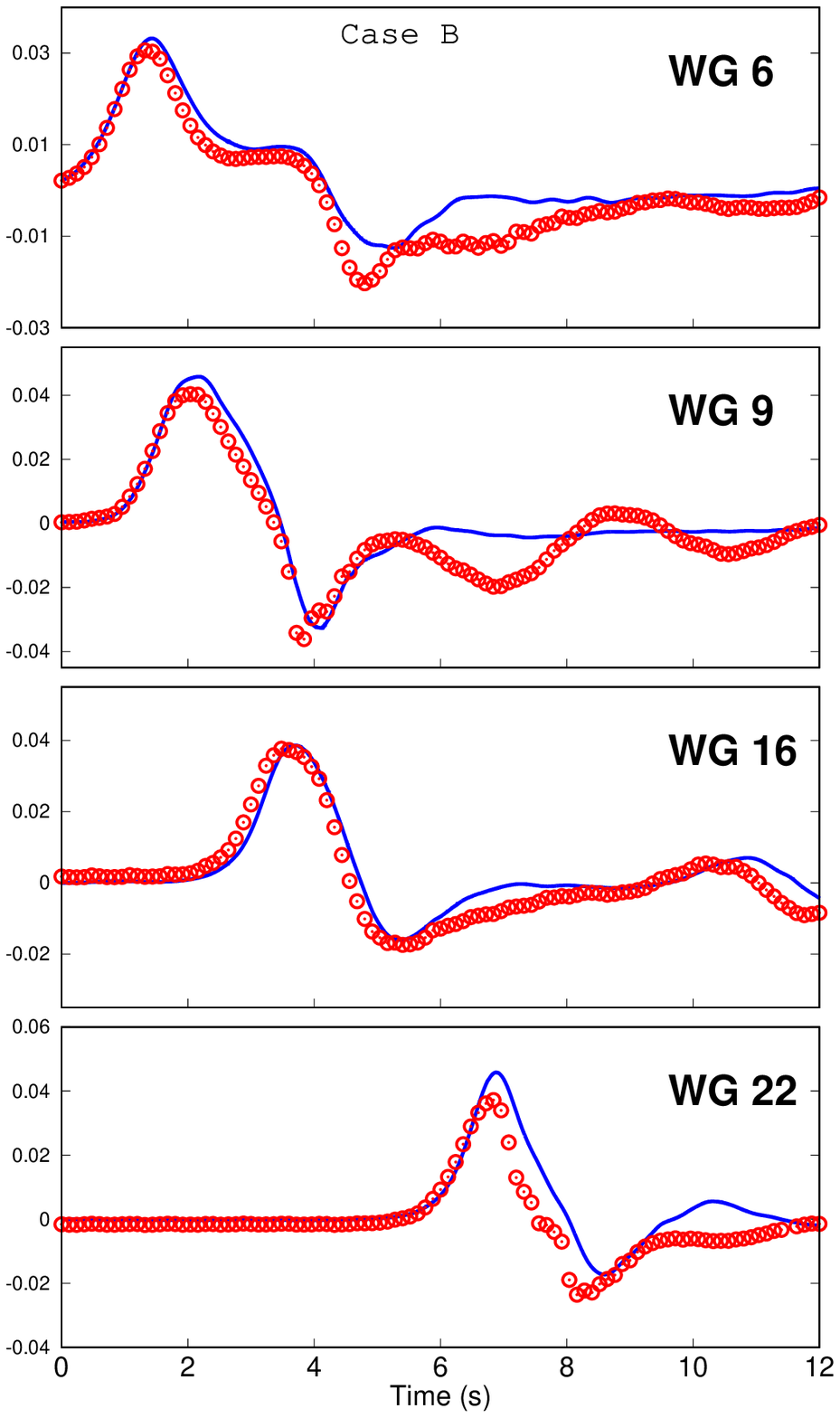}
    \includegraphics[trim={0 100 0 0},clip,width=0.45\linewidth]{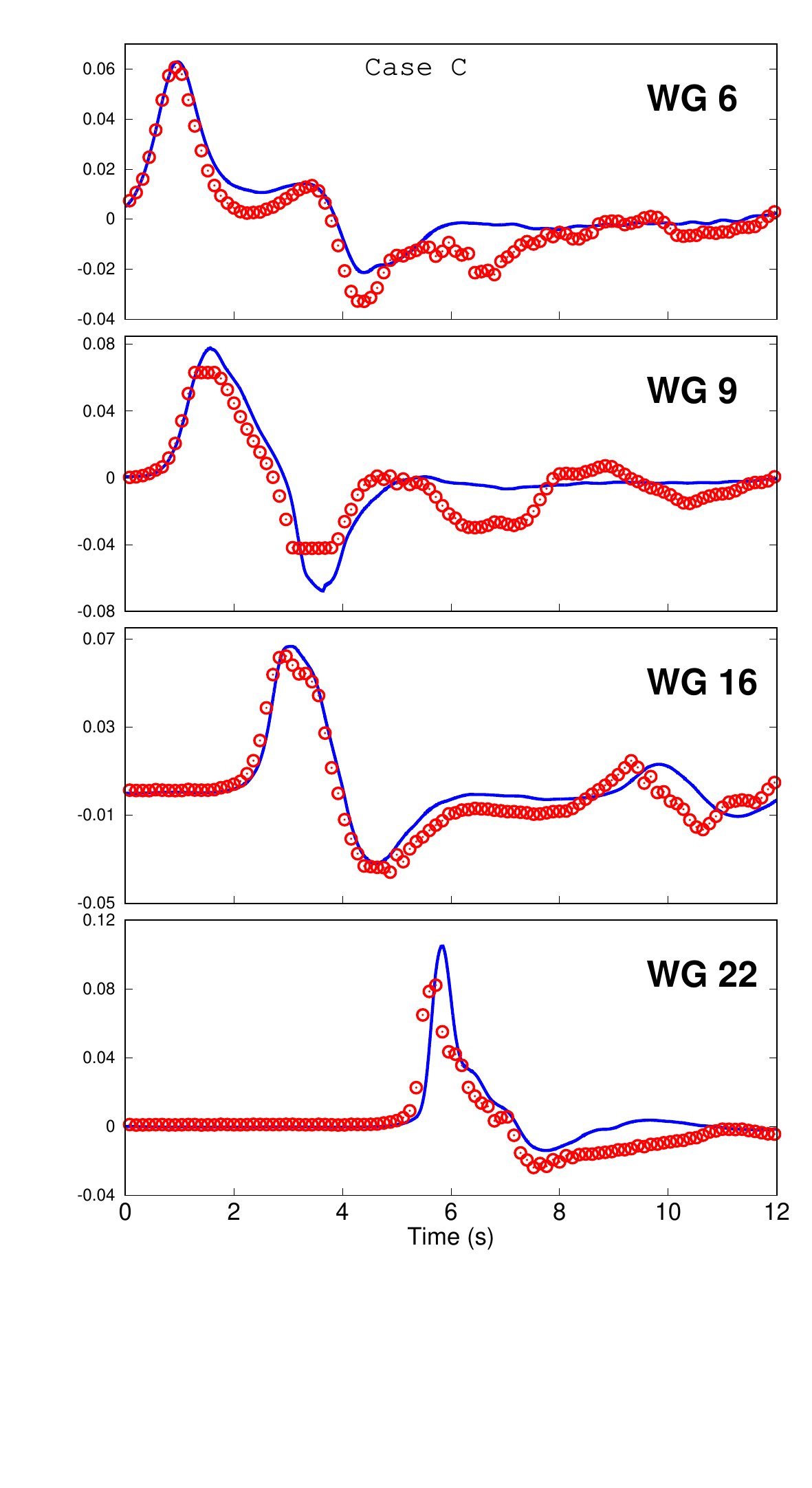}
    \caption{Experiment 4 -- Temporal series over the period
      $t\in[0,\SI{12}{s}]$ of the free surface elvation $\waterh + z$ in
      meters at the four WGs (blue solid) compared to the experimental
      data (red circles) for Case B (on the
      left) and Case C (on the
      right).}\label{fig:island_gauges}%
\end{figure}
\section{Conclusion}
\label{sec:conclusion}
In this paper we introduced a relaxation technique for solving the dispersive Serre
equations with full effects induced by the topography. We also derived a family
of analytical solutions to the dispersive Serre equations that can be used
for validating numerical methods. The relaxation approach yields a new hyperbolic system that is
compatible with dry states and extends the work presented in
\citep{Gu_Po_To_Ke_FAKE_SGN_2019}.
This hyperbolic system is then shown numerically to converge to the
original dispersive Serre model at the expected first-order
convergence rate when the relaxation parameter is chosen to be
proportional to the local mesh-size.
We then compared our numerical computations with several laboratory experiments for model validation and demonstrated close agreement with said data. We also
showed that neglecting the full terms induced by the topography as was
done in \citep{Gu_Po_To_Ke_FAKE_SGN_2019} yields poor agreement with
the experimental data and thus shows the importance of these terms.

\bibliographystyle{abbrvnat}
\bibliography{ref}

\end{document}